\newtheorem{theorem}{Theorem}[section]
\newtheorem{proposition}[theorem]{Proposition}
\newtheorem{definition}[theorem]{Definition}
\newtheorem{corollary}[theorem]{Corollary}
\newtheorem{problem}[theorem]{Problem}
\newtheorem{example}[theorem]{Example}
\newtheorem{lemma}[theorem]{Lemma}
\newtheorem{remark}[theorem]{Remark}
\newtheorem{conjecture}[theorem]{Conjecture}
\DeclareMathOperator{\Gal}{Gal}
\DeclareMathOperator{\Id}{Id}
\DeclareMathOperator{\Char}{char}
\def\dim{\mathrm{dim}}
\def\Ker{\mathrm{Ker\hspace{0.1cm}}}
\def\M{\mathcal{M}}
\def\op{\mathrm{op}}
\newcommand\al{\alpha}
\newcommand\be{\beta}
\newcommand\ra{\rightarrow}
\def\Z{\mathbb{Z}}
\def\R{\mathbb{R}}
\def\Q{\mathbb{Q}}
\def\ot{\otimes}
\def\bucle{\ar@(ur,ul)[]}
\def\point#1{*+[o][F-]{\text{\scriptsize $#1$}}}
\def\cuadrado#1{*+[F-]{\text{\scriptsize $#1$}}}
\def\dual{k[\xi]}
\newcommand{\abs}[1]{\left|#1\right|}
\newcommand{\dtext}[1]{\emph{\textbf{#1}}}
\title{Factorization structures with a 2-dimensional factor}
\author{\'{O}scar Cortadellas Izquierdo}
\address{Department of Algebra\\
University of Granada \\
Avda. Fuentenueva s/n\\ E-18071\\ Granada\\ Spain}
\email{ocortad@ugr.es}
\author{Javier L\'{o}pez Pe\~{n}a}
\address{Mathematics Research Centre\\ Queen Mary University of London\\ Mile End Road\\ London E1 4NS\\ United Kingdom}
\email{j.lopez@qmul.ac.uk}
\author{Gabriel Navarro}
\address{Department of Computer Sciences and AI\\
 University of Granada \\
C/ El Greco s/n\\ E-51002 \\ Ceuta\\ Spain}
\email{gnavarro@ugr.es}
\thanks{Keywords and phrases:
twisting maps, factorization structures, twisted tensor product, quiver, path algebra.\\
2000
\textit{Mathematics Subject Classification}: 16S35, 16G20, 16W35.\\
Research supported by MTM2007-66666, FQM-1889 and FQM-266. J.~L\'opez was also supported by Max-Planck Institut f\"ur Mathematik in Bonn.}
\begin{document}

\begin{abstract}
We introduce the notion of quantum duplicates of an (associative, unital) algebra, motivated by the problem of constructing toy-models for quantizations of certain configuration spaces in quantum mechanics. The proposed (algebraic) model relies on the classification of factorization structures with a two-dimensional factor. In the present paper, main properties of this particular kind of structures are determined, and we present a complete description of quantum duplicates of finite set algebras. As an application, we obtain a classification (up to isomorphism) of all the algebras of dimension 4 (over an arbitrary field) that can be factorized as a product of two factors.
\end{abstract}

\maketitle

\section*{Introduction}

Consider a manifold $M$ representing some physical system. From a dual point of
view, this manifold can also be represented by some algebra of functions $A$
(that could be taken, for instance, to be $A=C^\infty(M)$, the algebra of smooth
functions on $M$) over some base field $k$ (usually $k=\mathbb{R}$ or
$k=\mathbb{C}$ when dealing with actual physical systems). Now, if we want to describe a physical system consisting on two
``parallel'' sheets $M_1$ and $M_2$, both equal to $M$, the natural algebra to
consider from a classical point of view is the direct product $A\times A$, which
is isomorphic to the algebra $A\ot k^2$.

\begin{figure}[ht]
\!\!\!\subfigure[Standard commutative duplicate]
    {   $\displaystyle \xy (0,13) ;(60,4)
            **\crv{(10,-5)&(30,15)&(50,-15)&(60,4)};
            (0,25) ;(60,15)
            **\crv{(10,1)&(30,22)&(50,-8)&(60,12)};
            (-3,17)*+{M_1}; (7,23)*+{M_2}
            \endxy $
    }\
    \subfigure[Noncommutative duplicate]
    {   $\displaystyle
        \xy (0,13) ;(60,4)
        **\crv{(10,-5)&(30,15)&(50,-15)&(60,4)}
        ?(.65)*{\bullet}="A";
        ?(.1)*{\bullet}="B";
        ?(.4)*{\bullet}="C";
        ?(.8)*{\bullet}="D";
        (0,25) ;(60,15)
        **\crv{(10,1)&(30,22)&(50,-8)&(60,12)}
        ?(0.5)*{\bullet}="F";
        ?(0.05)*{\bullet}="G";
        ?(0.2)*{\bullet}="H";
        ?(0.3)*{\bullet}="I";
        ?(0.9)*{\bullet}="J";
        "F"; "A"  **\dir{.};
        "G"; "B"  **\dir{.};
        "H"; "B"  **\dir{.};
        "J"; "D"  **\dir{.};
        "I"; "C"  **\dir{.};
        (-3,17)*+{M_1};
        (7,23)*+{M_2}
        \endxy $
    }
\end{figure}

However, assume that we keep studying
this system while making the two sheets come closer and closer. After we reach
certain critical distance, one could expect that measurements on $M_1$ would
start to interfere with measurements on $M_2$, so that they do not commute
anymore. Under these premises, the algebra $A\ot k^2$ can no longer be assumed
(since it is commutative) to represent the measurements on the ``parallel, but
very close'' sheets $M_1$ and $M_2$. In the present work, our aim is to define
and study, inside an algebraic framework, a suitable replacement for the algebra $A\ot k^2$ in this quantized
situation, i.e. a (noncommutative) algebra that may be regarded as a
deformation of $A\ot k^2$ and retains similar structural properties.

In particular, we would like to keep the linear dimension of the algebra $A\ot
k^2$, since the fact that a system starts showing up quantum effects should not
alter the number of (linearly independent) quantities that we can meassure on
it. Thus, we are led to finding an algebra $X$, a ``\emph{quantum duplicate}''
of $A$, which is isomorphic, as a vector space, to $A\ot k^2$. In particular, if
$A$ is finite dimensional, the dimension of a quantum duplicate of $A$ should
be twice the dimension of $A$.

The method we propose in order to build these so-called quantum duplicates is
the use of a \dtext{factorization structure}, or \dtext{twisted tensor product} involving the algebra $A$ plus a
(necessarily commutative) 2--dimensional factor. The number of ways in which we
can choose this two dimensional factor depends on the field $k$. More
concretely, if $k$ admits a degree 2 field extension $\bar{k}$, (as happens, for
instance, with the real numbers), then we have three kinds of non-isomorphic
algebras of
dimension 2 (over $k$), namely:

\begin{enumerate}
    \item The trivial direct product $k^2$,
    \item Quadratic field extensions of $k$,
    \item The ring of dual numbers $\dual\cong k[x]/(x^2)$.
\end{enumerate}

On the other hand, if $k$ does not admit nontrivial extensions (for instance,
if $k=\mathbb{C}$), there are only two possible algebras: the direct product
$k^2$ and the dual numbers.

From a purely algebraic point of view, the notion of twisted tensor product comes directly from the \dtext{factorization problem}:

\begin{quotation} \it
Given some kind of (algebraic) object, is it possible to find to suitable subobjects, having minimal intersection and such that they generate our original object?
\end{quotation}

The factorization problem has been intensively studied in the case of groups, coalgebras and Hopf algebras, and algebras (cf. for instance \cite{Takeuchi81a}, \cite{Caenepeel00a}, \cite{Caenepeel02a}, \cite{Agore07a}). In the particular case of algebras, a well known result (independently proven many times) establishes a one-to-one correspondence between the set of factorization structures admitting two given algebras $A$ and $B$ as factors and the set of so-called \dtext{twisting maps}, which are linear maps $\tau:B\ot A\to A\ot B$ satisfying certain compatibility conditions with respect with the units and products of $A$ and $B$.

Henceforth, the problem of constructing factorization structures with given factors boils down to the problem of finding all the existing twisting maps for those factors. Under suitable, very mild, conditions (for instance, whenever $A$ and $B$ are affine algebras), the set $\mathcal{T}(A,B)$ of all the twisting maps $\tau:B\ot A \to A\ot B$ is an algebraic variety, and two interesting problems arise:

\begin{problem}
Is it possible to describe explicitly the variety $\mathcal{T}(A,B)$?
\end{problem}

\begin{problem}
Once the variety $\mathcal{T}(A,B)$ is known, is it possible to determine which points of the variety give rise to isomorphic algebras?
\end{problem}

These two problems, even in the simplest cases, turn out to be very difficult. Though there are many different methods that produce twisted tensor products of two given algebras, not a single one that produces all the existing ones is known, let alone describing the properties of the algebraic variety. Even harder is the problem of the determination of the isomorphism classes of algebras obtained from the same factors through different tensor products, or finding any description of these isomorphism classes in terms of the variety $\mathcal{T}(A,B)$.

Recently, Cibils showed in \cite{cibils} that the set $\mathcal{T}(k^{2},A)$ of twisted tensor products between any algebra $A$ and the commutative, semisimple algebra $k^{2}$ (also called the set of \dtext{2--interlacings}) is in one to one correspondence with couples of linear endomorphisms of the algebra $A$ satisfying certain conditions. If we take $A=k^n$, these couples of linear maps can be described by combinatorial means using certain families of colored quivers, and this description gives a simple way to describe all the twisted tensor products $k^n\ot_{\tau} k^{2}$, up to isomorphism (cf. \cite{cibils, Lopez08a}). Some other partial steps in the classification problem for factorization structures have been undertaken in \cite{Borowiec00a} and the final sections of \cite{Guccione99a}.

Beyond the physical motivation originating the notion of noncommutative duplicate (defined by Cibils), the remaining choices of two-dimensional factors have their own source of interest.
In particular, the building of duplicates by means of dual numbers has
yet another physical interpretation. Being the algebra of dual numbers one of the simplest examples of non-trivial superspaces, where the $\xi$--direction can be reinterpreted as the fermionic direction, and the scalar component as the bosonic direction, the procedure of  duplicating a manifold using dual numbers can be regarded as a simple way of adding a superstructure to the physical system described by the manifold. Moreover, quantizations of the tensor product $A\otimes k[\xi]$ admit an interpretation as \dtext{infinitesimal deformations} (for a \emph{central} formal parameter) in the framework of formal deformation theory (cf. \cite{Gerstenhaber64a}), so twisted tensor products $A\otimes_\tau k[\xi]$ may be regarded as infinitesimal deformations with respect to a non-central parameter, with the added advantage of the existence of such a kind of deformations for algebras that are rigid from the formal point of view (like separable algebras). Finally, the remaining case of quantum duplicates obtained using a quadratic field extension have similar properties to complexifications of real algebras, hinting the possibility of thinking about them as noncommutative scalar extensions.

In Section \ref{section1} we introduce the definition of \dtext{quantum duplicates} of an algebra $A$ as twisted tensor products $A\otimes_\tau B$ where $B$ is any two-dimensional algebra, characterize the
set of twisting maps as a set of couples $(f,\delta)$ where $f$ is an algebra endomorphism of $A$ and $\delta$ is an $f$-derivation, satisfying certain compatibility conditions. We also show how to lift certain classes of endomorphisms and involutions to such kind of twisted tensor products, with special attention to the case in which the algebra $B$ is a quadratic field extension of $k$, obtaining a simple criterion (similar to the one existing for telling whether a vector space is a complexification of a real one) for determining whether or not certain $k$-algebras factorize as quantum duplicates with a quadratic field extension.

Sections \ref{section2} and \ref{section3} deal with the classification problem of isomorphism classes of the resulting twisted tensor products. More concretely, in Section \ref{section2} we classify (up to isomorphism) all quantum duplicates of the finite set algebras $k^n$ by means of combinatorial techniques, obtaining results similar to the ones contained in \cite{cibils}.

The paper concludes with the complete classification, in Section \ref{section3}, of all the algebras of dimension 4 that can be obtained as a twisted tensor product, that should necessarily be of two factors of dimension 2. For the case of an algebraically closed field, the resulting algebras are displayed inside the diagram of all the four dimensional algebras, showing that no apparent pattern relating those \emph{factorizable} algebras appears.

Along this paper, $k$ will denote a field. All the algebras will be unital, associative $k$-algebras. The tensor product will be taken over $k$ and all maps will be $k$-linear. An algebra $X$ is a \dtext{factorization structure} of the algebras $A$ and $B$ if there exist two injective algebra maps $i_{A} : A \hookrightarrow X$ and $i_{B} : B \hookrightarrow X$ and the map $\varphi : A \ot B \rightarrow X$ defined by $\varphi(a \ot b) = i_{A}(a) \cdot i_{B}(b)$ is a linear isomorphism.

A $k$-linear map $\tau : B \ot A \rightarrow A \ot B$ is said to be a \dtext{(unital) twisting map} if
\begin{equation}\label{bd:1}
 \tau \circ (B \ot \mu_{A}) = (\mu_{A} \ot B) \circ (A \ot \tau) \circ (\tau \ot A)
\end{equation}
\begin{equation}\label{bd:2}
 \tau \circ (\mu_{B} \ot A) = (A \ot \mu_{B}) \circ (\tau \ot B) \circ (B \ot \tau),
\end{equation}
\begin{equation}\label{bd:3}
 \tau(1\ot a) = a \ot 1,\quad \tau(b\ot 1) = 1\ot b\quad \text{for all $a\in A$, $b\in B$}.
\end{equation}
where $\mu_{A}$ and $\mu_{B}$ stand for the product of $A$ and $B$ respectively and $A$ and $B$ stand for the identity maps on each algebra.

The map $\mu_{\tau} := (\mu_{A} \ot \mu_{B}) \circ (A \ot \tau \ot B)$ defines an associative product over $A \ot B$ if, and only if, $\tau$ is a twisting map. So we can endow the vectorial space $A \ot B$ with this product and get a new algebra $(A \ot B, \mu_{\tau})$, which will be denoted by $A\otimes_\tau B$.

\begin{proposition}[\cite{Cap95a}, \cite{Majid90b}]
 Let $(C, i_{A}, i_{B})$ a factorization structure of $C$ with factors $A$ and $B$. Then there exists a unique twisting map $\tau : B \ot A \rightarrow A \ot B$ such that $C$ is isomorphic to $A \ot_{\tau} B$ as a twisted tensor product.
\end{proposition}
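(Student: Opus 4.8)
The plan is to recover the twisting map directly from the factorization data and then verify it does the job. Given a factorization structure $(C, i_A, i_B)$, the multiplication map $\varphi \colon A \ot B \to C$, $\varphi(a \ot b) = i_A(a) \cdot i_B(b)$, is a linear isomorphism by hypothesis, so I would define
\begin{equation}\label{eq:deftau}
\tau := \varphi^{-1} \circ \mu_C \circ (i_B \ot i_A) \colon B \ot A \to A \ot B,
\end{equation}
that is, $\tau(b \ot a)$ is the unique element $\sum_i a_i \ot b_i$ of $A \ot B$ such that $i_B(b) \cdot i_A(a) = \sum_i i_A(a_i) \cdot i_B(b_i)$ in $C$. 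This is forced: any isomorphism of twisted tensor products $C \cong A \ot_\tau B$ compatible with the inclusions must send $i_B(b) i_A(a)$ to $\tau(b \ot a)$, so uniqueness will come essentially for free once existence is established.

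Next I would check the three axioms \eqref{bd:1}, \eqref{bd:2}, \eqref{bd:3} for this $\tau$. The unit conditions \eqref{bd:3} are immediate: since $i_A, i_B$ are unital algebra maps, $i_B(1_B) i_A(a) = i_A(a) = i_A(a) i_B(1_B)$ and $i_B(b) i_A(1_A) = i_B(b) = i_A(1_A) i_B(b)$, so $\varphi^{-1}$ of these is $a \ot 1$ and $1 \ot b$ respectively. For the pentagon-type conditions \eqref{bd:1} and \eqref{bd:2}, the key point is that both sides, after applying $\varphi$, compute the same product in $C$: associativity in $C$ together with the fact that $i_A$ and $i_B$ are algebra homomorphisms lets one rewrite $i_B(b) \cdot i_A(a) \cdot i_A(a')$ in two ways, matching the two sides of \eqref{bd:1}, and similarly $i_B(b) \cdot i_B(b') \cdot i_A(a)$ for \eqref{bd:2}. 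Concretely, I would push \eqref{bd:1} through $\varphi$, use that $\varphi \circ (\mu_A \ot B) = \mu_C \circ (\varphi \ot i_B)$ and $\varphi \circ (A \ot \tau) = (\text{composite expressing } i_A(a)\cdot(\,\cdot\,))$, and reduce everything to associativity of $\mu_C$; injectivity of $\varphi$ then gives the identity on $B \ot A$. The same bookkeeping, with the roles of $A$ and $B$ swapped, handles \eqref{bd:2}.

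Then I would show $C \cong A \ot_\tau B$ as algebras via $\varphi$ itself, i.e. that $\varphi \colon (A \ot B, \mu_\tau) \to (C, \mu_C)$ is an algebra isomorphism: it is already a linear isomorphism, and on elements $\varphi(\mu_\tau(a \ot b \ot a' \ot b')) = i_A(a)\, i_B(b)\, i_A(a')\, i_B(b') = \varphi(a \ot b) \cdot \varphi(a' \ot b')$ follows by unwinding the definition of $\mu_\tau$ and of $\tau$. This isomorphism is compatible with the two inclusions (it sends $a \ot 1 \mapsto i_A(a)$ and $1 \ot b \mapsto i_B(b)$), which is exactly what ``isomorphic as a twisted tensor product'' should mean. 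Finally, for uniqueness: if $\tau'$ is any twisting map with an isomorphism $\psi \colon C \to A \ot_{\tau'} B$ respecting the inclusions, then $\psi$ must carry $i_A(a) i_B(b)$ to $(a\ot 1)\cdot_{\tau'}(1 \ot b) = a \ot b$, hence $\psi = \varphi^{-1}$ followed by the canonical identification, forcing $\tau' = \tau$ from \eqref{eq:deftau}.

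The main obstacle is the bookkeeping in verifying \eqref{bd:1} and \eqref{bd:2}: one has to express the iterated compositions $(\mu_A \ot B)\circ(A \ot \tau)\circ(\tau \ot A)$ cleanly in terms of $\varphi$ and $\mu_C$ without getting lost in the Sweedler-type notation for $\tau$. The conceptual content, though, is only the associativity of $\mu_C$ plus the homomorphism property of $i_A, i_B$; once the translation dictionary between ``products in $C$'' and ``the maps $\varphi, \tau, \mu_\tau$'' is set up, there is no genuine difficulty, and everything else (the unit axioms, the algebra isomorphism, uniqueness) is routine.
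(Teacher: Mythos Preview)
The paper does not prove this proposition at all: it is stated with attribution to \cite{Cap95a} and \cite{Majid90b} and used as a black box, so there is no in-paper proof to compare your proposal against. Your sketch is the standard argument found in those references (define $\tau$ as $\varphi^{-1}\circ\mu_C\circ(i_B\ot i_A)$, reduce the twisting axioms to associativity of $\mu_C$, and read off uniqueness from compatibility with the inclusions), and it is correct as outlined.
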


When working with path algebras $kQ$ of quivers
$Q$ (cf. for instance \cite{Simson}) we shall assume that arrows are multiplied as if they were
maps, for instance, in the quiver
$$\xymatrix{\point{1} \ar[r]^-{\alpha} & \point{2} \ar[r]^-{\beta} & \point{3}}$$
the length-two path from $\xymatrix{\point{1}}$ to
$\xymatrix{\point{3}}$ will be written as $\beta\alpha$. All along
the work, the ideal $(Q_{\geq 2})$ of $KQ$ generated by
all paths in $Q$ of length greater than one will play a fundamental r\^{o}le. The quotient $kQ/(Q_{\geq 2})$ will be denoted by $kQ_{< 2}$. We
shall denote by $Q^{\op}$ the opposite quiver of $Q$, that is, the
quiver that has the same set of vertices while arrows are reversed.

\section{Generalities about quantum duplicates}\label{section1}

Let $A$ and $B$ be two
(unitary) $k$-algebras, with $\dim_k B=2$, so that we may consider it given as a quotient $B=k[x]/(p(x))$, where $p(x)$ is a
polynomial of degree two.
All along this work we write it as $p(x)=x^2-\alpha x+\beta$ where
$\alpha, \beta\in k$. We also denote by $q$ the polynomial $q(x)=x^2+\alpha x+\beta$, that is, the polynomial whose roots (in the algebraic closure of $k$) are the opposite to the ones of $p$.

\subsection{Basic definitions and properties}
Our purpose is to describe the twisting maps between $A$ and $B$,
that is, the $k$-linear maps
\begin{equation}\label{eq:twist} \tau: k[x]/(p(x))\otimes A\longrightarrow A\otimes
k[x]/(p(x)) \end{equation} verifying the twisting conditions (\ref{bd:1}) and (\ref{bd:2}).
Following the method developed in \cite{cibils}, it is worth noting
that $A\otimes k[x]/(p(x))\cong A[x]/(p(x))$ and then a twisting map
as in (\ref{eq:twist}) is determined by the values $\tau(x\otimes
a)$ corresponding to the product $xa$ in $A[x]/(p(x))$. For any
$a\in A$, we put $\tau(x\otimes a) = xa = \delta(a) + f(a)x$. Then,
	\begin{gather}
		x^2 a=(\alpha x-\beta)a=\alpha(\delta(a)+f(a)x)-\beta a
=\alpha \delta(a)-\beta a + \alpha f(a) x \\
		x(xa)=x(\delta(a) + f(a)x) = \delta^2(a) - \beta f^2(a) +
f\delta(a)x + \delta f(a) x + \alpha f^2(a)  x
	\end{gather}
for any $a\in A$. Thus the associativity $x^2
a=x(xa)$ produces the equalities
	\begin{gather}
		p(\delta)=\delta^2-\alpha \delta+\beta id_A=\beta f^2 \label{eq:1} \\
		f\delta+\delta f=\alpha (f-f^2) \label{eq:2}
	\end{gather}
As in the proof of \cite[Proposition 2.10]{cibils}, the condition
$x(ab)=(xa)b$ for any $a,b\in A$, produces that $f:A\rightarrow A$
is a morphism of algebras and $\delta:A\rightarrow A$ is a left $f$-derivation. It is also clear that if we have such an $f$
and $\delta$ verifying (\ref{eq:1}) and (\ref{eq:2}), the linear map
defined by $\tau(x\otimes a)=\delta(a)\otimes 1+f(a)\otimes x$ is a
twisting map between $A$ and $B$. In other words, we have proven the
following result.

\begin{lemma}\label{lemma:map_derivation} The set of twisting maps
$\tau: k[x]/(p(x))\otimes A\longrightarrow A
\otimes k[x]/(p(x))$ is in one-to-one correspondence with the set
of pairs $(f,\delta)$, where $f:A\rightarrow A$ is an endomorphism
of algebras and $\delta:A\rightarrow A$ is a left $f$-derivation, verifying the conditions
$$\text{$p(\delta)=\beta f^2$ and $f\delta+\delta f=\alpha
(f-f^2)$}.$$
\end{lemma}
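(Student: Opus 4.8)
The plan is to unwind the abstract twisting conditions (\ref{bd:1})--(\ref{bd:3}) into concrete statements about the data $(f,\delta)$ determined by $\tau(x\otimes a)=\delta(a)\otimes 1+f(a)\otimes x$. Since $\dim_k B=2$ and $B=k[x]/(p(x))$ has $\{1,x\}$ as a $k$-basis, any $k$-linear map $\tau\colon B\otimes A\to A\otimes B$ is completely determined by its values on $1\otimes a$ and $x\otimes a$; the unitality condition (\ref{bd:3}) forces $\tau(1\otimes a)=a\otimes 1$, so all the freedom sits in $\tau(x\otimes a)$, which we write as $\delta(a)\otimes 1+f(a)\otimes x$ with $f,\delta\colon A\to A$ linear. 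This already gives a bijection between linear maps $\tau$ satisfying (\ref{bd:3}) and pairs of linear maps $(f,\delta)$; the content of the lemma is to show that (\ref{bd:1}) and (\ref{bd:2}) translate exactly into: $f$ is an algebra endomorphism, $\delta$ is a left $f$-derivation, and the two polynomial identities $p(\delta)=\beta f^2$ and $f\delta+\delta f=\alpha(f-f^2)$ hold.

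First I would handle condition (\ref{bd:1}), which governs compatibility of $\tau$ with $\mu_A$. Identifying $A\otimes B$ with $A[x]/(p(x))$ as in the excerpt, this condition says precisely that left multiplication by $x$ is compatible with the product of $A$, i.e. $x(ab)=(xa)b$ for all $a,b\in A$. Expanding $x(ab)=\delta(ab)\otimes 1+f(ab)\otimes x$ and $(xa)b=(\delta(a)\otimes 1+f(a)\otimes x)b=\delta(a)b\otimes 1+f(a)b\otimes x$ (the last step using that $x$ acts on the right component by the product of $B$, which on the level of $\tau$ requires re-applying $\tau$ to move the $x$ past $b$), one compares coefficients of $1$ and of $x$ in the $\{1,x\}$-basis. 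The coefficient of $x$ yields $f(ab)=f(a)f(b)$, so $f$ is multiplicative; combined with $\tau(x\otimes 1)=1\otimes x$ (from (\ref{bd:3})), which forces $f(1)=1$ and $\delta(1)=0$, this makes $f$ a unital algebra endomorphism. The coefficient of $1$ then yields $\delta(ab)=\delta(a)b+f(a)\delta(b)$, i.e. $\delta$ is a left $f$-derivation. This is exactly the step the authors refer to via \cite[Proposition 2.10]{cibils}, so I would either cite that or carry out the short bookkeeping.

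Next I would handle condition (\ref{bd:2}), compatibility of $\tau$ with $\mu_B$. Because $B$ is generated as an algebra by $x$ subject to $x^2=\alpha x-\beta$, this condition is equivalent to the single requirement that applying $\tau$ is consistent with the relation $x^2a = (\alpha x-\beta)a$ for all $a\in A$; concretely, computing $x(xa)$ by iterating the rule $\tau(x\otimes a)=\delta(a)\otimes1+f(a)\otimes x$ must agree with computing $(\alpha x-\beta)a$ directly. This is precisely the associativity computation $x^2a=x(xa)$ already displayed in the excerpt: expanding both sides in the $\{1,x\}$-basis and equating the coefficient of $1$ gives $\alpha\delta(a)-\beta a=\delta^2(a)-\beta f^2(a)+f\delta(a)+\ldots$ — i.e., after collecting, $\delta^2-\alpha\delta+\beta\,\mathrm{id}_A=\beta f^2$, which is $p(\delta)=\beta f^2$ — while the coefficient of $x$ gives $\alpha f^2(a)=\alpha f(a)-f\delta(a)-\delta f(a)$, i.e. $f\delta+\delta f=\alpha(f-f^2)$. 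Conversely, any pair $(f,\delta)$ with $f$ an algebra endomorphism, $\delta$ a left $f$-derivation, and these two identities holding defines via the above formula a linear map $\tau$ which, by reversing these computations, satisfies (\ref{bd:1})--(\ref{bd:3}); this gives the inverse of the correspondence and closes the proof.

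The main obstacle is purely organizational rather than conceptual: one must be careful that conditions (\ref{bd:1}) and (\ref{bd:2}), which a priori assert equalities of maps on the full tensor powers $B\otimes A\otimes A$ and $B\otimes B\otimes A$, really are equivalent to the finitely many scalar/operator identities obtained by testing on generators — this uses that $\{1,x\}$ spans $B$, that $\tau$ is already pinned down on $1\otimes A$ by unitality, and that both sides of each twisting condition are $k$-linear, so it suffices to check on $x\otimes(-)$ and iterate. Tracking the right-module structure correctly when moving $x$ past an element of $A$ (that is, remembering that the ``$\otimes x$'' component must itself be pushed through $\tau$ again) is the only place where a careless computation could go wrong; once that is set up, everything reduces to the two explicit displays already present in the excerpt.
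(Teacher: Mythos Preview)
Your proposal is correct and follows essentially the same approach as the paper: parametrize $\tau$ by the pair $(f,\delta)$ via $\tau(x\otimes a)=\delta(a)\otimes 1+f(a)\otimes x$, derive the endomorphism/$f$-derivation conditions from $x(ab)=(xa)b$, and obtain the two operator identities from $x^2a=x(xa)$. The only caveat is a small slip in your displayed intermediate expansion of $(xa)b$ (before re-applying $\tau$ to $x\cdot b$) and in the constant term of $x(xa)$, but you immediately correct course and reach the right identities, so this is cosmetic rather than a gap.
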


A twisting tensor product of the form $A\otimes_\tau B$, where $B$ is an algebra of dimension two, will be referred by a \dtext{quantum duplicate} of $A$.

\begin{example} If $\alpha=1$ and $\beta=0$, then $B\cong k^2$ and
we recover the approach given by Cibils \cite{cibils}, since, in
that case, the twisting maps are in one-to-one correspondence with
the pairs $(f,\delta)$ verifying that $p(\delta)=\delta^2-\delta=0$
and $\delta f+f \delta=f-f^2$.
\end{example}

\begin{remark}\label{rk:charnot2}
The previous lemma admits a refinement when the characteristic of
$k$ is different from two. Taking the linear transformation
$\phi(x)=(\alpha/2) x+1$, we obtain that $p(\phi (x))=x^2+\gamma$
and $k[x]/(p(x))\cong k[x]/(x^2+\gamma)$. Thus equations \eqref{eq:1}, \eqref{eq:2} are rewritten as follows:
	\begin{gather}
		\delta^2=\gamma (f^2-id_A) \\
		f\delta+\delta f=0
	\end{gather}
\end{remark}

\vspace{1cm}

%
	
\subsection{Characterization of certain quantum duplicates}
	
	When we have a real vector space $V$, we can construct the \dtext{complexification} of $V$, called $V^\mathbb{C}$, as the tensor product $V\otimes_{\mathbb{R}} \mathbb{C}$	. The original vector space $V$ remains a \emph{real} vector subspace of $V^\mathbb{C}$, and can be recovered if we take advantage of the canonical conjugation map $\chi:V^\mathbb{C}\to V^\mathbb{C}$ given by $\chi(v\otimes z):=v\otimes \bar{z}$. When the two dimensional algebra $l=k[x]/(p(x))=k(\eta)$ is a quadratic (Galois) field extension, twisted tensor products $A\otimes_{\tau} l$ can be regarded as "noncommutative scalar extensions" to $l$. Our goal in this subsection is to stablish a result that guarantees us that a given $k$--algebra $B$ can be factorized as $A\ot_\tau l$ for some twisted tensor product $\tau$. In the forecoming, $l$ will be assumed to be some fixed (Galois) quadratic field extension of $k$, with $k$ a field of characteristic different from 2.
	
	\begin{lemma}
		Let $B$ a $k$--algebra endowed with a right $l$--module structure $B\ot l\to B$. Then the map $i:l\to B$ given by $z\mapsto 1_B\cdot z$ is an injective algebra map.
	\end{lemma}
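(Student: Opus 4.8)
The plan is to verify directly that $i$ is a unital homomorphism of $k$-algebras and then to deduce injectivity from the fact that $l$ is a field. First I would record the two cheap observations. Since the module action $B\otimes l\to B$ is $k$-bilinear and $1_B$ is a fixed element, the assignment $z\mapsto 1_B\cdot z$ is $k$-linear; and the module axiom $1_B\cdot 1_l=1_B$ says precisely that $i$ is unital, $i(1_l)=1_B$. So it only remains to prove multiplicativity and injectivity.

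The step that genuinely uses the hypothesis is multiplicativity, $i(zw)=i(z)\,i(w)$. Here one exploits the compatibility of the right $l$-module structure with the product of $B$: the action being a morphism of the relevant structures amounts to $\mu_B$ being right $l$-linear, i.e. $b_1\,(b_2\cdot w)=(b_1b_2)\cdot w$ for all $b_1,b_2\in B$, $w\in l$. Applying this with $b_1=1_B\cdot z$ and $b_2=1_B$, and combining it with unitality of $B$ and the module associativity $(b\cdot z)\cdot w=b\cdot(zw)$, one gets the chain $i(z)\,i(w)=(1_B\cdot z)(1_B\cdot w)=\bigl((1_B\cdot z)\,1_B\bigr)\cdot w=(1_B\cdot z)\cdot w=1_B\cdot(zw)=i(zw)$. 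Hence $i$ is a unital $k$-algebra map. (In the model situation $B=A\otimes_\tau l$ one may shortcut this: the induced right action is $(a\otimes z)\cdot w=a\otimes zw$, and right $l$-linearity of $\mu_\tau$ is then an immediate consequence of associativity of the product of $l$, so no separate verification is needed.)

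For injectivity, note that $\ker i$ is a two-sided ideal of $l$; as $l$ is a field this forces $\ker i=0$ or $\ker i=l$, and the latter is ruled out because $i(1_l)=1_B\neq 0$, the unital algebra $B$ being nonzero. Therefore $i$ is injective. I do not expect a real obstacle here; the only point requiring care is to make explicit what "right $l$-module structure" is meant to contribute — namely its compatibility with $\mu_B$ in the sense above — so that the multiplicativity computation actually closes up. Every other line is a one-step verification.
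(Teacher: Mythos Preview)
Your proposal is correct and is exactly the kind of direct verification the paper has in mind; the paper's own proof consists of the single sentence ``It is a straightforward calculation.'' Your remark that the phrase ``right $l$-module structure'' must be read as including compatibility with $\mu_B$ (so that $b_1(b_2\cdot w)=(b_1b_2)\cdot w$) is well taken and is indeed the one point where a reader might pause.
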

	\begin{proof}
    It is a straightforward calculation.
	\end{proof}
	
	Assume now that $B$ is endowed with a conjugation map $\sigma:B\to B$ satisfying the following conditions:
	\begin{enumerate}
		\item $\sigma^2=\Id_B$,
		\item $\sigma(ab)=\sigma(a)\sigma(b)$ for all $a,b\in B$.
		\item $\sigma(a\lambda)=\sigma(a)\bar{\lambda}$ for all $\lambda\in l, a\in B$
	\end{enumerate}
	and let
	\[
		A:=B^\sigma=\{a\in B|\ \sigma(a)=a\}.
	\]
	We have an obvious algebra map (the inclusion map) $i_A:A\to B$.
	
	\begin{lemma}
		The mapping
			\begin{eqnarray*}
				\varphi:A\otimes l & \longrightarrow & B \\
				a\otimes z & \longmapsto & a\cdot z
			\end{eqnarray*}
		is a linear isomorphism.
	\end{lemma}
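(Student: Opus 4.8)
The plan is to show that $\varphi$ is a linear isomorphism by exhibiting an explicit inverse, exploiting the fact that $\mathrm{char}\,k \neq 2$ and that $l = k(\eta)$ is a Galois quadratic extension. Fix $\eta \in l$ with $\eta \notin k$ and $\bar\eta = -\eta$ (such an element exists after the normalization of Remark \ref{rk:charnot2}, so that $\eta^2 = -\gamma \in k$). Then $\{1,\eta\}$ is a $k$-basis of $l$, and $\varphi$ is the $k$-linear map sending $a\otimes 1 \mapsto a$ and $a \otimes \eta \mapsto a\cdot\eta$. Both source and target are $k$-vector spaces, but a priori we only know $\dim_k B$ is finite if $A$ is; to avoid dimension-counting I would instead construct the inverse directly.

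**Constructing the inverse via $\sigma$.**
First I would observe that $\sigma$ is $k$-linear: indeed $k \subseteq l$ is fixed by $\lambda \mapsto \bar\lambda$, so condition (3) with $\lambda \in k$ gives $\sigma(a\lambda) = \sigma(a)\lambda$. Next, for any $b \in B$ consider the elements
\[
  b_+ := \tfrac{1}{2}\bigl(b + \sigma(b)\bigr), \qquad
  b_- := \tfrac{1}{2\eta}\bigl(b - \sigma(b)\bigr)\cdot(\eta^{-1}),
\]
wait --- more cleanly: set $b_0 := \tfrac12(b + \sigma(b))$ and $b_1 := \tfrac12(b - \sigma(b))\cdot \eta^{-1}$, where $\eta^{-1} \in l$ makes sense since $l$ is a field. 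Using $\sigma^2 = \Id$ one checks $\sigma(b_0) = b_0$, so $b_0 \in A$; and since $\sigma(\eta^{-1}) = \overline{\eta^{-1}} = -\eta^{-1}$, condition (3) gives $\sigma(b_1) = \tfrac12(\sigma(b) - b)\cdot(-\eta^{-1}) = b_1$, so $b_1 \in A$ as well. I would then define $\psi : B \to A \otimes l$ by $\psi(b) := b_0 \otimes 1 + b_1 \otimes \eta$ and verify $\psi$ is $k$-linear (immediate, since $\sigma$ is $k$-linear and the operations involved are $k$-linear).

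**Checking $\psi$ and $\varphi$ are mutually inverse.**
For the composite $\varphi\circ\psi$: $\varphi(\psi(b)) = b_0 + b_1\cdot\eta = \tfrac12(b+\sigma(b)) + \tfrac12(b - \sigma(b))\eta^{-1}\eta = \tfrac12(b+\sigma(b)) + \tfrac12(b-\sigma(b)) = b$. For $\psi\circ\varphi$ it suffices to check on elementary tensors $a \otimes 1$ and $a\otimes\eta$ with $a \in A$. For $a \otimes 1$: $\varphi(a\otimes 1) = a$, and since $\sigma(a) = a$ we get $(a)_0 = a$, $(a)_1 = 0$, hence $\psi(a) = a\otimes 1$. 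For $a\otimes\eta$: $\varphi(a\otimes\eta) = a\cdot\eta$, and $\sigma(a\cdot\eta) = \sigma(a)\bar\eta = a\cdot(-\eta) = -a\cdot\eta$ by condition (3), so $(a\eta)_0 = \tfrac12(a\eta - a\eta) = 0$ and $(a\eta)_1 = \tfrac12(a\eta + a\eta)\cdot\eta^{-1} = a\eta\cdot\eta^{-1} = a$; thus $\psi(a\cdot\eta) = a \otimes \eta$. Hence $\psi\circ\varphi = \Id_{A\otimes l}$ and $\varphi$ is a linear isomorphism.

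**Anticipated obstacle.**
The only delicate point is making sure the right $l$-module structure interacts correctly with $\sigma$ and with multiplication by $\eta^{-1}$ — in particular that $a \cdot (\eta \eta^{-1}) = a$ really follows from the module axioms, and that condition (3) is genuinely used with $\lambda = \eta^{-1}$ rather than just $\lambda = \eta$. Everything else is a routine computation once the correct formula for $\psi$ is written down; the essential input is $\mathrm{char}\,k \neq 2$ (to divide by $2$) and that $\eta$ is invertible with $\bar\eta = -\eta$ (to split $B$ into the $\pm1$-eigenspaces of $\sigma$, one of which is $A$ and the other $A\cdot\eta$).
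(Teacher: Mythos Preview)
Your proof is correct and follows essentially the same approach as the paper: both use the decomposition $b_0 = \tfrac12(b+\sigma(b))$, $b_1 = \tfrac12(b-\sigma(b))\cdot\eta^{-1}$ to split $b$ into its $A$-components. The only cosmetic difference is that the paper verifies surjectivity and injectivity separately (applying $\sigma$ to the relation $a+b\eta=0$ to force $a=b=0$), whereas you package the same computation as an explicit two-sided inverse $\psi$.
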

	
	\begin{proof}

        Let $b\in B$, consider the elements
        $$a_1:=\frac{b+\sigma(b)}{2} \qquad  \text{and} \qquad a_2:=\frac{b-\sigma(b)}{2\eta}$$
        then, obviously, $b=a_1+a_2\cdot \eta$. Now,
        $$\sigma(a_1)=\frac{\sigma(b)+\sigma(\sigma(b))}{2}=\frac{\sigma(b)+b}{2}=a_1
        \qquad \text{and} \qquad \sigma(a_2)=\frac{\sigma(b)-b}{-2\eta}=a_2$$
        and therefore $a_1,a_2\in A$. Thus $b=\varphi(a_1\otimes 1+a_2\otimes \eta)$ and $\varphi$ is surjective.

        Let $\alpha\in B\otimes C$, we may write $\alpha=a\otimes +b\otimes \eta$ with $a,b\in A$, and then $\varphi(\alpha)=\varphi (a\otimes 1)+\varphi(b\otimes \eta)=a+b\cdot i$. Assume $\varphi(\alpha)=0$, that is, $a+b\cdot \eta=0$. Applying $\sigma$, $a-b\cdot \eta=0$. As a consequence, $a=b=0$ and thus $\varphi$ is injective.

	\end{proof}
	
	As a consequence of the previous lemmas, by using \cite[Proposition 2.7]{Cap95a} (see also \cite{tambara90a} or \cite{majid95a}), we obtain the following corollary:
	
	\begin{corollary}
		An algebra $B$ factorizes as a twisted tensor product $A\otimes_{\tau} l$ for some twisting map $\tau$ if, and only if, $B$ is endowed with a right $l$-module structure and a conjugation map satisfying (1), (2) and (3).
	\end{corollary}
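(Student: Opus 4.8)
The plan is to prove both implications of the biconditional, relying on the two lemmas just established together with the characterization of factorization structures in \cite[Proposition 2.7]{Cap95a}. Recall that \cite[Proposition 2.7]{Cap95a} asserts that an algebra $B$ is a twisted tensor product of $A$ and $l$ precisely when there exist algebra maps $i_A:A\to B$ and $i_l:l\to B$ such that the multiplication map $\varphi:A\otimes l\to B$, $a\otimes z\mapsto i_A(a)\cdot i_l(z)$, is a linear isomorphism; the twisting map $\tau$ is then recovered uniquely.

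For the ``if'' direction, suppose $B$ carries a right $l$-module structure and a conjugation $\sigma$ satisfying conditions (1), (2), (3). By the first Lemma, $i_l:l\to B$, $z\mapsto 1_B\cdot z$, is an injective algebra map. Set $A:=B^\sigma$; since $\sigma$ is a unital algebra endomorphism, $A$ is a subalgebra of $B$ and the inclusion $i_A:A\to B$ is an injective algebra map. The multiplication map $\varphi:A\otimes l\to B$ is exactly the one appearing in the second Lemma, which tells us it is a linear isomorphism. Hence all hypotheses of \cite[Proposition 2.7]{Cap95a} are met, and $B\cong A\otimes_\tau l$ for a unique twisting map $\tau$.

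For the ``only if'' direction, suppose $B=A\otimes_\tau l$ for some twisting map $\tau$. The right $l$-module structure is the obvious one: $l$ embeds as $1_A\otimes l\subseteq B$ and right multiplication by elements of this copy of $l$ gives $B$ the required right $l$-module structure (one should check this is well defined and associative, but it is immediate from associativity of $\mu_\tau$ together with the unit condition \eqref{bd:3}, which forces $\tau(b\otimes 1)=1\otimes b$). For the conjugation, let $\bar{\phantom{z}}:l\to l$ denote the nontrivial element of $\Gal(l/k)$ and define $\sigma:=\Id_A\otimes\,\bar{\phantom{z}}$ on $A\otimes l$. Conditions (1) and (3) are then clear; for (2) one must verify that $\sigma$ is multiplicative for $\mu_\tau$, which reduces to checking $\sigma\circ\tau=(\mathrm{something})\circ\tau$ compatibility, i.e. that $\tau$ intertwines the relevant twists — and since $\tau(x\otimes a)=\delta(a)\otimes 1+f(a)\otimes x$ with $f,\delta$ valued in $A$, applying $\Id_A\otimes\,\bar{\phantom{z}}$ and using that conjugation fixes $1$ and sends the generator to its conjugate, one sees the twisting relations are preserved. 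Finally $B^\sigma=A\otimes k=A$, so the conjugation is of the required form.

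The main obstacle is the verification, in the ``only if'' direction, that the candidate conjugation $\sigma=\Id_A\otimes\,\bar{\phantom{z}}$ is genuinely an algebra automorphism of $(A\otimes l,\mu_\tau)$ rather than merely a linear involution; this is where the structure of $\tau$ — in particular the fact that $f$ and $\delta$ take values in $A$, together with the relations $p(\delta)=\beta f^2$ and $f\delta+\delta f=\alpha(f-f^2)$ from Lemma \ref{lemma:map_derivation} and the normalization of Remark \ref{rk:charnot2} — has to be used to see that $\sigma$ respects $\mu_\tau$. Everything else is a bookkeeping check against \cite[Proposition 2.7]{Cap95a}, so once this compatibility is nailed down the corollary follows at once.
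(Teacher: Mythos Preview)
Your ``if'' direction is correct and is exactly what the paper does: invoke the two preceding lemmas to produce the injective algebra maps $i_A$ and $i_l$ and the linear isomorphism $\varphi$, then apply \cite[Proposition 2.7]{Cap95a}. The paper's own proof is nothing more than this.

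The ``only if'' direction, however, contains a genuine gap. Your candidate conjugation $\sigma=\Id_A\otimes\bar{\phantom{z}}$ is \emph{not} in general an algebra automorphism of $A\otimes_\tau l$. Using the normalization of Remark~\ref{rk:charnot2} (so $\bar x=-x$), compute
\[
\sigma\bigl((1\otimes x)(a\otimes 1)\bigr)=\sigma\bigl(\delta(a)\otimes 1+f(a)\otimes x\bigr)=\delta(a)\otimes 1-f(a)\otimes x,
\]
while
\[
\sigma(1\otimes x)\cdot\sigma(a\otimes 1)=-(1\otimes x)(a\otimes 1)=-\delta(a)\otimes 1-f(a)\otimes x.
\]
These agree only when $\delta=0$. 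So the relations of Lemma~\ref{lemma:map_derivation} do \emph{not} force multiplicativity of $\sigma$; your hand-wave at the end is where the argument actually breaks. (Concretely: for the round-trip coloration of $k^2\otimes_\tau l$ with a nonzero colour, the resulting algebra is $\mathcal{M}_2(k)$, and the conjugation that works there is an inner automorphism, not $\Id\otimes\bar{\phantom{z}}$.) The paper itself does not spell out this direction either --- its one-line proof really only justifies the ``if'' implication via the lemmas --- so you have not missed an argument present in the text; but your attempted proof of this half does not go through as written.
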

	
	\subsection{Lifting of endomorphisms and involutions}
	
	Let $A$ be an algebra, $\varphi:A\to A$ an algebra map, and $A\otimes_\tau B$ a quantum duplicate of $A$, with $B=k[x]/(p(x))$, induced by the couple $(f,\delta)$. The map $\varphi$ admits a natural lifting $\tilde{\varphi}:A\otimes_\tau B\to A\ot_\tau B$ defined by $\tilde{\varphi}(a\otimes b):=\varphi(a)\otimes b$.
	
	Assume that the lifting $\tilde{\varphi}$ is an algebra map. Then on the one hand we have
	\[
		\tilde{\varphi}(xa) = \tilde{\varphi}(f(a)x+\delta(a)) = \varphi(f(a))x+\varphi(\delta(a)),
	\]
	while, in the other one,
	\[
		\tilde{\varphi}(xa) = \tilde{\varphi}(x)\tilde{\varphi}(a) = x\varphi(a)=f(\varphi(a))x + \delta(\varphi(a)).
	\]
	Thus, we have stablished the following result:

	\begin{theorem}
		An endomorphism $\varphi$ of $A$ can be lifted to an endomorphism of the twisted tensor product $A\otimes_{(f,\delta)} B$ if, and only if
		\[
			f\varphi = \varphi f, \quad\text{and}\quad \delta\varphi=\varphi\delta.
		\]
		In other words, the set of endomorphisms of $A$ admitting a lifting to $A\otimes_{(f,\delta)} B$ coincides with the set of morphisms that simultaneously commute with $f$ and $\delta$.
	\end{theorem}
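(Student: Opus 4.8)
The plan is to use the natural lifting $\tilde\varphi(a\ot b):=\varphi(a)\ot b$ and exploit the fact that, as noted in Section \ref{section1}, the twisting map $\tau$ (hence the algebra structure on $A\ot_\tau B$) is completely determined by the values $\tau(x\ot a)=\delta(a)+f(a)x$, i.e. by the products $xa$ in $A[x]/(p(x))$. Since $\tilde\varphi$ is automatically $k$-linear and, restricted to $A\ot 1$, equals the algebra map $\varphi$, and since it sends $x\mapsto x$, it is an algebra map on $A\ot_\tau B$ if and only if it respects products of the generators. The product of two elements of $A\ot 1$ is again in $A\ot 1$ and is respected because $\varphi$ is an algebra map; the product $x\cdot x = \alpha x - \beta$ is respected trivially; so the only nontrivial relation to check is the compatibility with the products $xa$ for $a\in A$.

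First I would record the two expressions for $\tilde\varphi(xa)$ displayed just before the statement: expanding $xa=f(a)x+\delta(a)$ and applying $\tilde\varphi$ gives $\varphi(f(a))x+\varphi(\delta(a))$, while using multiplicativity gives $\tilde\varphi(x)\tilde\varphi(a)=x\varphi(a)=f(\varphi(a))x+\delta(\varphi(a))$. Comparing coefficients of $x$ and of $1$ in the free module $A\oplus Ax$ yields exactly $\varphi f=f\varphi$ and $\varphi\delta=\delta\varphi$, which proves the forward implication. For the converse I would argue that these two identities suffice: define $\tilde\varphi$ as above, note it is $k$-linear and unital, and verify multiplicativity on a spanning set $\{a\ot 1, a\ot x : a\in A\}$ of $A\ot_\tau B$. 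The products to check are $(a\ot 1)(a'\ot 1)$, $(a\ot 1)(a'\ot x)$, $(a\ot x)(a'\ot 1)$, and $(a\ot x)(a'\ot x)$; each reduces, via the defining relation $xa'=f(a')x+\delta(a')$ and the multiplicativity/derivation properties of $f$ and $\delta$, to an expression whose image under $\tilde\varphi$ matches $\tilde\varphi$ of its factors, the only inputs being that $\varphi$ is an algebra map and that $\varphi$ commutes with both $f$ and $\delta$.

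The calculation is genuinely routine once one writes $x a'$ in normal form, so there is no serious obstacle; the only point requiring a little care is to make sure the verification is done on an honest generating set and that the normal-form rewriting (pushing all $x$'s to the right) is used consistently, since $A\ot_\tau B$ is noncommutative. One may also streamline the converse by observing that an algebra map out of $A\ot_\tau B\cong A\langle x\rangle/(\text{relations})$ is determined by its values on $A$ and on $x$ together with the requirement that the defining relations $x^2=\alpha x-\beta$ and $xa=f(a)x+\delta(a)$ be preserved; the first relation is preserved automatically since $\tilde\varphi(x)=x$, and the computation above shows the second is preserved precisely under the stated hypotheses. This presentation-level argument makes the equivalence immediate and avoids checking the four products by hand.
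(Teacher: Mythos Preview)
Your proposal is correct and follows essentially the same approach as the paper: the paper's argument consists precisely of the two displayed computations of $\tilde\varphi(xa)$ preceding the theorem, from which the commutation conditions are read off by comparing coefficients. Your write-up is in fact more complete than the paper's, since you also spell out the converse direction (either via the four products on the spanning set or via the presentation-level argument), whereas the paper leaves this implicit.
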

	

Assume now that $A$ is an algebra endowed with an involution $a\mapsto a^\ast$, and that $l=k(\eta)$ is a quadratic field extension of $k$, with $\Char(k)\neq 2$, so that the Galois automorphism $\eta\mapsto -\eta$ gives us an involutive $k$--automorphism of $l$. Assume also that we are given a twisting map $\tau$ induced from a couple $(f,\delta)$. Our purpose is to determine conditions in $f$ and $\delta$ in such a way that we can ensure the existence of an involution $j$ in $A\otimes_{(f,\delta)} l$.

First, recall that for any linear endomorphism $\varphi$ of $A$ we can define its conjugate $\overline{\varphi}$ by $\overline{\varphi}(a):=\varphi(a^\ast)^\ast$. An easy computation shows that if $f$ is an algebra map, then so is $\overline{f}$, and if $\delta$ is a left $f$-derivation, then the conjugate $\overline{\delta}$ is a right $\overline{f}$-derivation.

Any involution $j$ defined on a twisted tensor product $A\otimes_{\tau} B$ which is compatible with the ones existing in $A$ and $B$ must satisfy
	\begin{eqnarray*}
		j(a\otimes b) & = & j((a\otimes 1)(1\otimes b))= j(1\otimes b)j(a\otimes 1) = \\
		& = & (1\otimes j_B(b))(j_A(a)\otimes 1)=\tau(j_B(b)\otimes j_A(a)),
	\end{eqnarray*}
i.e., we necessarily have $j=\tau\circ (j_B\otimes j_A)\circ \tau_{BA}$, where $\tau_{BA}$ is the flip map. Henceforth, the involutions can be combined in a unique compatible way, provided that the twisting map $\tau$ satisfies the involutive condition $(\tau\circ (j_B\otimes j_A)\circ \tau_{BA})^2=\Id_{A\otimes B}$ (cf. \cite{VanDaele94a}). In our particular situation, where the involution in $B$ is given by $\eta\mapsto -\eta$ and the twisting map is giving in terms of $f$ and $\delta$, the lifting of the involution of $A$ becomes
	\begin{eqnarray*}
		(a\eta)^\ast & = & \overline{\eta}a^\ast = -\eta a^\ast = -f(a^\ast)\eta -\delta(a^\ast)
	\end{eqnarray*}
so the involutive condition is
	\begin{eqnarray*}
		a\eta & = & ((a\eta)^\ast)^\ast = (-f(a^\ast)\eta -\delta(a^\ast))^\ast = \\
		& = & \eta f(a^\ast)^\ast  + \delta(a^\ast)^\ast = \\
		& = & f(f(a^\ast)^\ast) \eta  + \delta(f(a^\ast)^\ast) - \delta(a^\ast)^\ast,
	\end{eqnarray*}
	for all $a\in A$. We have proved the following result:
	
\begin{proposition}
	If $(A,\ast)$ is an algebra with involution, $l=k(\eta)$ is a quadratic field extension (with Galois automorphism $\eta\mapsto -\eta$), and we have a twisting map induced by a couple $(f,\delta)$, then the involution of $A$ lifts in a compatible way to an involution of $A\otimes_{(f,\delta)} l$ if, and only if, the following conditions are satisfied:
	\begin{gather*}
		f\circ \overline{f} = \Id_A,\\
		\delta \circ \overline{f} = \overline{\delta}.
	\end{gather*}
\end{proposition}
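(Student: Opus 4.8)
The plan is to use the reduction already carried out in the lines preceding the statement: any $k$-linear map $j$ on $A\otimes_{(f,\delta)}l$ which restricts to $\ast$ on $A\otimes 1$, to the Galois automorphism $\eta\mapsto-\eta$ on $1\otimes l$, and reverses products is forced to equal $j=\tau\circ(j_B\otimes j_A)\circ\tau_{BA}$; and, by the general theory of involutions on twisted tensor products (cf.\ \cite{VanDaele94a}), this map is a genuine involution exactly when it squares to the identity. Hence the whole proposition amounts to rephrasing the single equation $j^2=\Id_{A\otimes l}$ as a pair of conditions on $(f,\delta)$.

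To do this I would split $A\otimes l$ as the vector-space direct sum $(A\otimes 1)\oplus A\eta$. On the first summand $j(a\otimes 1)=a^\ast\otimes 1$, so $j^2$ restricts there to $(a^\ast)^\ast\otimes 1=a\otimes 1$ automatically and contributes no constraint. On the second summand the computation displayed before the statement gives $(a\eta)^\ast=-f(a^\ast)\eta-\delta(a^\ast)$; applying $j$ once more — using that $j$ acts on $A\eta$ by this \emph{same} rule and acts on $A\otimes 1$ by $\ast$ — yields, for every $a\in A$,
\[
 j^2(a\eta)=f\big(f(a^\ast)^\ast\big)\,\eta+\delta\big(f(a^\ast)^\ast\big)-\delta(a^\ast)^\ast=(f\circ\overline{f})(a)\,\eta+\big(\delta\circ\overline{f}-\overline{\delta}\big)(a),
\]
where the second equality is just the definition $\overline{\varphi}(a)=\varphi(a^\ast)^\ast$ applied to $\varphi=f$ and to $\varphi=\delta$.

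Finally I would read off components in $(A\otimes 1)\oplus A\eta$: the requirement $j^2(a\eta)=a\eta$ for all $a$ splits into $(f\circ\overline{f})(a)=a$, i.e.\ $f\circ\overline{f}=\Id_A$, and $(\delta\circ\overline{f})(a)-\overline{\delta}(a)=0$, i.e.\ $\delta\circ\overline{f}=\overline{\delta}$. Conversely, if both identities hold, the displayed formula shows $j^2(a\eta)=a\eta$, and together with $j^2(a\otimes 1)=a\otimes 1$ this gives $j^2=\Id_{A\otimes l}$, so $j$ is the sought compatible involution. There is no genuine obstacle in this argument; the only steps demanding a little attention are the verification that $j$ acts on the whole summand $A\eta$ by $c\eta\mapsto-f(c^\ast)\eta-\delta(c^\ast)$ for \emph{all} $c$ (so that the second application of $j$ is legitimate) and the elementary identifications $f(f(a^\ast)^\ast)=(f\circ\overline{f})(a)$, $\delta(f(a^\ast)^\ast)=(\delta\circ\overline{f})(a)$ and $\delta(a^\ast)^\ast=\overline{\delta}(a)$, all immediate from the definition of the conjugate.
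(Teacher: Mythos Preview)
Your argument is correct and follows essentially the same route as the paper: the paper likewise reduces to the involutive condition $j^2=\Id$, computes $((a\eta)^\ast)^\ast=f(f(a^\ast)^\ast)\eta+\delta(f(a^\ast)^\ast)-\delta(a^\ast)^\ast$, and equates with $a\eta$. Your version is slightly more explicit in isolating the summand $A\otimes 1$ (where $j^2$ acts trivially) and in making the identifications with $\overline{f}$ and $\overline{\delta}$, but there is no substantive difference in method.
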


\section{Quantum duplicates of $k^n$}\label{section2}

 In this section we
describe and classify all quantum duplicates of $k^{n}$ for
some natural number $n \geq 2$. Denote by $\{e_{1}, \ldots, e_{n}\}$
the canonical basis of $k^{n}$. Following Cibils' procedure \cite{cibils}, the set
of algebra morphisms $f:k^{n} \ra k^{n}$ is in one-to-one
correspondence with the set of  set maps $\varphi : \{e_{1}, \ldots,
e_{n}\} \ra \{e_{1}, \ldots, e_{n}\}$, where, to each set map
$\varphi$, we associate the algebra morphism $f$ defined as
	\begin{equation}\label{7}
		f(e_{i}) = \sum\limits_{\{e_{j}|\varphi(e_{j}) = e_{i}\}}e_{j}, \hspace{5mm}\mbox{for any }i = 1, \dotsc , n.
	\end{equation}
This is also in one-to-one correspondence with the set of quivers
with $n$ vertices verifying that from each vertex starts precisely
one arrow. From now on we denote by $Q_{f}$ the quiver associated to
the endomorphism $f$ (by meanings of $\varphi$). We recall that $Q_f$
has $\{e_{1}, \dotsc, e_{n}\}$ as set of vertices, and there is an
arrow $e_{i} \ra e_{j}$ in $Q_f$ if, and only if, $\varphi(e_{i}) =
e_{j}$. For the convenience of the reader we introduce the following notation:
	\begin{definition}
For any positive integer $s$, we say that a connected component of $Q_f$ is an $s$-cycle if the component has the following shape:
	\[
		\xymatrix@C=15pt@R=15pt{
			& & \ar@{.}@/_41pt/[rr] & & & \ar@{.}@/_41pt/[rr] &          & &  & \\
			& & & \ar@{.>}[d] & & &\ar@{.>}[d] & & & \\
			\ar@{.}@/^41pt/[dd]& & & \circ \ar[ld] &   & &   \circ \ar[lll]& &  &  \ar@{.}@/_41pt/[dd] \\
			& \ar@{.>}[r]& \circ \ar[rd]   &  & &   &   & \circ \ar[lu] & \ar@{.>}[l]& \\
			& &    & \circ \ar@{.}[rrr] & & & \circ \ar[ru]  & & &\\
& & & \ar@{.>}[u]  & & &\ar@{.>}[u] & &  &\\
			& &   \ar@{.}@/^41pt/[rr] & & &\ar@{.}@/^41pt/[rr]
& & & &
 		}
	\]
where the central cycle has $s$ vertices and, by a  diagram
	\[
		 \xymatrix@C=15pt@R=15pt{
		 	& &  \ar@{.}@/_41pt/[dd]  \\
			\circ &   \ar@{.>}[l] &      \\
              &   &
         }
    \]
we mean a tree with ascendent orientation. Observe that, for any $s$-cycle, if we remove the arrows of the central cycle, we obtain $s$ disjoint trees with ascendent orientation. For instance, in the figure below we have a 3-cycle.
$$
 \xy
 (0,0)*+{\circ}="a1",(6,-11)*+{\circ}="a3",(20,-10)*+{\circ}="a2",
 (-3,0)*+{i_1},(6,-14)*+{i_3},(23,-10)*+{i_2},
 (-3,10)*+{\circ}="b",
 (3,10)*+{\circ}="c",
 (-6,20)*+{\circ}="d",
 (-3,20)*+{\circ}="e",
 (0,20)*+{\circ}="f",
 (20,0)*+{\circ}="h",
 (18,10)*+{\circ}="k",(22,10)*+{\circ}="l",(22,20)*+{\circ}="m",
 \ar@/^3.5ex/ "a1";"a2"^{a_1}\ar@/^1.3ex/"a2";"a3"^{a_2}\ar@/^1.7ex/
 "a3";"a1"^{a_3}
 \ar "b";"a1"\ar "c";"a1"\ar "d";"b"\ar "e";"b"\ar "f";"b"
 \ar "h";"a2"
 \ar "k";"h" \ar "l";"h" \ar "m";"l"
 \endxy
\qquad\Rightarrow\qquad
 \xy
 (0,0)*+{\circ}="a1",(6,-11)*+{\circ}="a3",(20,-10)*+{\circ}="a2",
 (-3,10)*+{\circ}="b",(-3,0)*+{i_1},(6,-14)*+{i_3},(23,-10)*+{i_2},
 (3,10)*+{\circ}="c",
 (-6,20)*+{\circ}="d",
 (-3,20)*+{\circ}="e",
 (0,20)*+{\circ}="f",
 (20,0)*+{\circ}="h",
 (18,10)*+{\circ}="k",(22,10)*+{\circ}="l",(22,20)*+{\circ}="m",
 \ar "b";"a1"\ar "c";"a1"\ar "d";"b"\ar "e";"b"\ar "f";"b"
 \ar "h";"a2"
 \ar "k";"h" \ar "l";"h" \ar "m";"l"
 \endxy
$$

We say that a connected component of $Q_f$ is an \dtext{strict $s$-cycle} if such component is
the quiver $\widetilde{\mathbb{A}}_s$, i.e., the quiver
$$\xy \xymatrix@C=20pt@R=10pt{      & \circ \ar[ld] & \circ \ar[l] & \circ \ar[l]  &    \\
 \circ \ar@{.}[rd]  &  &    &   & \circ \ar[lu]    \\
  & \circ \ar[r] & \circ \ar[r] & \circ \ar[ru] &   }
\endxy$$
with $s$ vertices. Observe that, according to this nomenclature, an strict $1$-cycle component is nothing more than a single vertex with a loop whilst
  an strict $2$-cycle is the round-trip quiver.
\end{definition}

Therefore we have the following result, see \cite{jlns} or \cite{GTN}:

\begin{lemma} For any algebra map $f:k^n\rightarrow k^n$, each connected component of the quiver $Q_f$ is a $s$-cycle
for certain integer $s$.
\end{lemma}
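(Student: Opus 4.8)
The plan is to analyze the combinatorial structure forced on $Q_f$ by the fact that $f$ comes from a \emph{set map} $\varphi\colon\{e_1,\dots,e_n\}\to\{e_1,\dots,e_n\}$, as in \eqref{7}. Recall that the arrow convention on $Q_f$ is: there is an arrow $e_i\to e_j$ if and only if $\varphi(e_i)=e_j$. Since $\varphi$ is a well-defined function, \emph{each vertex is the source of exactly one arrow} (the arrow pointing to its image under $\varphi$); this is precisely the characterization of such quivers recalled just before the Definition. So the whole proof reduces to the following purely graph-theoretic statement: if $Q$ is a finite quiver in which every vertex has out-degree exactly $1$, then every connected component of $Q$ is an $s$-cycle in the sense of the Definition, i.e. a single oriented cycle of some length $s\geq 1$ with a collection of disjoint trees (with edges oriented towards the cycle) attached at its vertices.

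The key steps are as follows. First I would fix a connected component $\mathcal{C}$ of $Q_f$ and consider the ``successor'' function it encodes: starting from any vertex $v$ and repeatedly following the unique outgoing arrow produces a sequence $v=v_0,v_1=\varphi(v),v_2=\varphi^2(v),\dots$ Since the vertex set is finite, this sequence must eventually repeat, and because out-degree is exactly $1$ the first repetition closes up into an oriented cycle; call it the \emph{central cycle} of $\mathcal{C}$, with some length $s\geq 1$ (the case $s=1$ being a loop, as noted in the Definition). Second, I would show this central cycle is unique within $\mathcal{C}$: two distinct oriented cycles in the same component, together with out-degree $1$, would force some vertex to have out-degree $\geq 2$, a contradiction. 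Third, I would remove the $s$ arrows of the central cycle and argue that what remains, restricted to $\mathcal{C}$, is a disjoint union of $s$ rooted trees, one hanging off each cycle vertex: after deletion every non-cycle vertex still has out-degree $1$ while the $s$ cycle vertices now have out-degree $0$, and iterating $\varphi$ from any vertex now terminates at a unique cycle vertex (it cannot cycle, since any cycle would have had to be the central one, now destroyed), which shows connectivity to exactly one root and acyclicity; a standard counting of vertices versus edges then confirms the forest structure. Finally I would observe that the orientation ``towards the cycle'' is exactly the ascendent orientation appearing in the Definition, so $\mathcal{C}$ has the claimed shape.

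I do not expect any genuine obstacle here: the statement is essentially the classical description of the functional graph of a self-map of a finite set, and the only real content is matching that description to the pictorial Definition of an $s$-cycle given in the paper. The one point requiring a little care is the uniqueness of the central cycle and the verification that the deleted-cycle subgraph is genuinely a \emph{forest} (no cycles, and the right number of edges), but both follow immediately from the out-degree-$1$ condition together with finiteness, so this is the mildly fiddly but entirely routine part. If one wanted to be fully rigorous one could also invoke the cited references \cite{jlns} or \cite{GTN}, but the self-contained argument above is short enough to include directly.
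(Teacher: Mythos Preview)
Your argument is correct and essentially the standard functional-graph description of a self-map of a finite set. Note, however, that the paper does not actually give its own proof of this lemma: it simply states the result and refers to \cite{jlns} and \cite{GTN}. So there is no in-text proof to compare against; your self-contained argument is a genuine addition rather than a paraphrase.

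One small remark on the uniqueness step: your phrasing ``two distinct oriented cycles in the same component, together with out-degree $1$, would force some vertex to have out-degree $\geq 2$'' handles the case of cycles sharing a vertex, but does not immediately exclude two vertex-disjoint cycles living in the same \emph{undirected} connected component. The cleanest way to close this is the edge count you allude to later: a connected component with $m$ vertices has exactly $m$ arrows (one out of each vertex), and a connected undirected graph with $m$ vertices and $m$ edges is unicyclic. Alternatively, observe that every vertex eventually flows into a unique cycle under iteration of $\varphi$, and two disjoint cycles would then have disjoint ``basins'', contradicting undirected connectivity. Either patch is immediate, and you already flag this as the fiddly-but-routine part, so no real gap.
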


With respect to the $f$-derivations, since $k^n$ is separable,
all of them are inner, so each $\delta \in \mbox{Der}(k^{n},\;
^{f}\!(k^{n}))$ is determined by certain element $a = (a_{1},
\ldots, a_{n})\in k^{n}$ such that
\begin{equation}\label{3}
\delta(e_{i}) =
(f(e_{i})-e_{i})a=\sum\limits_{\varphi(e_{j})=e_{i}}e_{j}a_{j} -
e_{i}a_{i}, \hspace{3mm}\mbox{for any }i=1, \ldots, n.
\end{equation}

Observe that if $e_i$ is a loop vertex, then $\delta(e_i)$
does not depend of the value of $a_i$. For that reason, we normalize
the element $a$ by taking $a_i=0$ for any loop vertex $e_i$. Then
each inner derivation is given by a unique normalized element of
$k^n$ (that, following the nomenclature of \cite{cibils}, we shall call a \dtext{coloration} of $Q_f$)

In order to characterize the derivations (=colorations) verifying
(\ref{eq:1}) and (\ref{eq:2}), we will need the following formulae:
\begin{gather} f^{2}(e_{i}) =
\sum\limits_{\varphi^{2}(e_{j}) = e_{i}}e_{j} \\
\delta^{2}(e_{i}) =
\sum\limits_{\varphi^{2}(e_{k})=e_{i}}a_{k}a_{\varphi(e_{k})}e_{k}
 - \sum\limits_{\varphi(e_{j})=e_{i}}a_{j}(a_{j} + a_{i}) e_{j} +
 a_{i}^{2}e_{i}\end{gather}
 for any vertex $e_i$ of $Q_f$. Therefore, we rewrite (\ref{eq:1}) and (\ref{eq:2}) as follows:

\begin{gather}\label{eq:3}\sum\limits_{\varphi^{2}(e_{k})=e_{i}}(a_{k}a_{\varphi(e_{k})}-\be)e_{k}
- \sum\limits_{\varphi(e_{j})=e_{i}}(a_{j}(a_{j} + a_{i})+\al
a_{j})e_{j} + (a_{i}^{2}+\al a_{i} + \be)e_{i}=0 \\
\label{eq:4} \sum\limits_{\varphi^{2}(e_{k})=e_{i}}
(a_k+a_{\varphi(e_k)}+\alpha) e_k-
\sum\limits_{\varphi(e_{j})=e_{i}} (a_j+a_i+\alpha)e_j=0
\end{gather}
for any vertex $e_i$ of $Q_f$. It is easy to see
 that the study of the possible colorations of the
  quiver $Q_f$ can be reduced to the study of each
   connected component separately. Let us start by showing up the case of an strict cycle.

\begin{proposition}\label{prop:1} Let $p(x)=x^2-\alpha x+\beta\in k[x]$ be a polynomial of degree two.
 The set of twisting maps $(f,\delta) : k^{n} \otimes k[x]/(p(x)) \ra k[x]/(p(x)) \otimes
k^{n}$, where $Q_f$ is a strict $s$-cycle for some $s>1$, is the
following:
	\begin{enumerate}
		\item If $Q_f$ is a strict 2-cycle, it
is a one-parameter family of twisting maps, indexed in the field,
 given by the colorations
 			\[
				\xymatrix{
					*+[o][F-]{\text{\scriptsize $a$}} \ar@/^4pt/@<0.5 ex>[r]& *+[o][F-]{\text{\scriptsize $b$}} \ar@/^4pt/@<0.5 ex>[l]
				}
				\hspace{0.5cm} \text{where} \hspace{0.5cm} a+b=-\alpha.
			\]
		\item If $Q_f$ is a strict $s$-cycle with $s>2$, each twisting map is given by coloring each vertex of the cycle with a root of $q$ satisfying that, if a vertex is colored by a root $r_1$, the immediate predecessor and the immediate successor of such vertex are colored by $r_2=-\alpha-r_1$.
			\[
				\xy
				\xymatrix@C=20pt@R=10pt{
					& \point{r_1} \ar[ld] & \point{r_2} \ar[l] & \point{r_1} \ar[l]  & \\
		 			\point{r_2} \ar@{.}[rd]  &  &    &   & \point{r_2} \ar[lu] \\
  					& \point{r_1} \ar[r] & \point{r_2} \ar[r] & \point{r_1} \ar[ru] &
				}
				\endxy
			\]
     As a consequence, if $s$ is even, there are as many colorations as different roots of $p$; and, if $s$ is odd, there exists a coloration (which would then be unique) if, and only if, $p$ has a unique (double) root. In particular, there are at most two twisting maps.
	\end{enumerate}
\end{proposition}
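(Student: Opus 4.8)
The plan is to specialise the combinatorial equations \eqref{eq:3} and \eqref{eq:4} to a single strict $s$-cycle, read them off as linear/quadratic recursions for the coloration, and then enumerate the solutions, treating $s=2$ and $s\geq 3$ separately because the combinatorics degenerates in the first case.

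First I would set up notation: realise the strict $s$-cycle as the quiver with vertices $e_1,\dots,e_s$ and arrows $e_i\to e_{i+1}$ (indices mod $s$), so that $\varphi$ is the cyclic shift $\varphi(e_i)=e_{i+1}$. A strict $s$-cycle with $s>1$ has no loop vertices, so a coloration is simply a tuple $(a_1,\dots,a_s)\in k^s$ with no normalisation needed, and it determines $\delta$ via \eqref{3}. For each $i$ one has $\{e_j:\varphi(e_j)=e_i\}=\{e_{i-1}\}$ and $\{e_k:\varphi^2(e_k)=e_i\}=\{e_{i-2}\}$, so \eqref{eq:4} at $e_i$ becomes $(a_{i-2}+a_{i-1}+\alpha)e_{i-2}-(a_{i-1}+a_i+\alpha)e_{i-1}=0$, and \eqref{eq:3} at $e_i$ becomes
\[
(a_{i-2}a_{i-1}-\beta)\,e_{i-2}-a_{i-1}(a_{i-1}+a_i+\alpha)\,e_{i-1}+(a_i^2+\alpha a_i+\beta)\,e_i=0 .
\]

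For $s=2$ I would note that the shift has order $2$, so $e_{i-2}=e_i$: in the display above the first and last terms collapse onto $e_i$ and the constants $\pm\beta$ cancel, while \eqref{eq:4} at $e_1$ reduces to $(a_1+a_2+\alpha)(e_1-e_2)=0$. Hence the only condition is $a_1+a_2=-\alpha$, and under it every coefficient in \eqref{eq:3} is a scalar multiple of $a_1+a_2+\alpha$, so it vanishes; this is the one-parameter family of part (1). For $s\geq 3$ the three vertices $e_{i-2},e_{i-1},e_i$ are pairwise distinct, so I can compare coefficients termwise. From \eqref{eq:4} I get $a_{i-1}+a_i=-\alpha$ for all $i$; subtracting the instances at $i$ and $i-1$ gives $a_{i-2}=a_i$, so the coloration alternates between $a_i$ and $a_{i-1}=-\alpha-a_i$. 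The coefficient of $e_i$ in \eqref{eq:3} is exactly $q(a_i)=a_i^2+\alpha a_i+\beta$, forcing every colour to be a root of $q$; the coefficient of $e_{i-1}$ is $a_{i-1}(a_{i-1}+a_i+\alpha)$, which vanishes by the relation just found, and the coefficient of $e_{i-2}$ is $a_{i-2}a_{i-1}-\beta=a_ia_{i-1}-\beta$, which vanishes because the two roots of $q$ have product $\beta$. This is precisely the colouring rule of part (2): a root $r_1$ at a vertex forces $r_2=-\alpha-r_1$ at both neighbours.

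Finally I would count. Pick $a_1$ to be any root of $q$ in $k$; the alternation then propagates around the cycle, and consistency amounts to $a_{s+1}=a_1$. If $s$ is even this holds for either root, so the number of colorations is the number of distinct roots of $q$ in $k$, which equals the number of distinct roots of $p$ in $k$ (recall $q$ has opposite roots and the same discriminant as $p$). If $s$ is odd, consistency forces $a_1=a_2$, i.e. $p$ — equivalently $q$ — must have a double root, and then every vertex carries that root; otherwise there is no coloration. In either case at most two twisting maps occur. I expect the only real subtlety to be the bookkeeping separating the degenerate case $s=2$, where the $\varphi^2$-preimage of a vertex is the vertex itself, from $s\geq 3$, together with the routine verification that for $s\geq 3$ the $e_{i-1}$- and $e_{i-2}$-coefficients of \eqref{eq:3} are automatic consequences of \eqref{eq:4} and of $q(a_i)=0$.
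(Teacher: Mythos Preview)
Your proof is correct and follows essentially the same approach as the paper: specialise equations \eqref{eq:3} and \eqref{eq:4} to the strict cycle, separate the degenerate case $s=2$ (where $e_{i-2}=e_i$) from $s\geq 3$, and read off the constraints on the coloration. Your write-up is in fact more thorough than the paper's, since you explicitly verify sufficiency for $s\geq 3$ by checking that the $e_{i-1}$- and $e_{i-2}$-coefficients of \eqref{eq:3} vanish automatically, and you spell out the counting argument at the end.
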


\begin{proof}
Let us suppose that we treat with a strict 2-cycle, i.e., a
connected component with shape
	\[
		\xymatrix{
			\circ \ar@/^4pt/@<0.5 ex>[r]& \circ \ar@/^4pt/@<0.5 ex>[l]
		}.
	\]
We also denote by $e_i$ and $e_j$ the vertices of this component. Then \eqref{eq:3} and \eqref{eq:4} applied to the vertex $e_i$ reduces to
	\begin{gather}
		\label{eq:5} (a_{i}a_{j} - \be)e_{i} - a_{j}(a_{j}+a_{i}+\alpha) e_{j} + (a_{i}^{2} + \al a_{i} + \be)e_{i} = 0 \\
		\label{eq:6} (a_i+a_j+\alpha) e_i- (a_i+a_j+\alpha) e_j=0
	\end{gather}
Clearly, these equations hold if and only if $a_i+a_j=-\alpha$.

Let us now consider an strict $s$-cycle with $s>2$. Then, for any
vertex $e_i$ of $Q_f$, the component $i$ of \eqref{eq:3} provide us
the equation $q(a_i)=0$. Therefore \eqref{eq:3} holds if, and only if,
each vertex is colored with a root of $q$. Now, denote by $e_j$ the
immediate predecessor of $e_i$. Then the component $i$ of
\eqref{eq:4} provides us the equation $a_i+a_j+\alpha=0$. Similarly,
we may prove that the immediate successor of $e_i$ must be colored
by $-\alpha-a_i$.
\end{proof}


\begin{theorem}\label{th:1}
	Let $p(x)=x^2-\alpha x+\beta\in k[x]$ be a polynomial of degree two. The set of twisting maps $\tau : k^{n} \otimes k[x]/(p(x)) \ra k[x]/(p(x)) \otimes
k^{n}$  is in one-to-one correspondence with the set of colored
quivers $Q_{f}$, where each connected component $Q_i$ of $Q_f$ is colored according to the following rules:
	\begin{enumerate}
		\item If $Q_i$ is a 1-cycle, any immediate predecessor of the loop vertex must be colored by a root of $q$. Given an immediate predecessor of the loop vertex $e$ colored by
$r_1$, other vertex $d$ in the same tree as $e$ must be colored by
$r_1$ if $t$ is even, and by $r_2=-\alpha-r_1$ if $t$ is odd, where
$t$ is the length of the path from $d$ to $e$.
			\[
				\xymatrix@C=20pt@R=10pt{
					 \point{r_1}\ar[rd] &  &   &   &  \point{r_2} \ar[ld] &\point{r_1}\ar[l] \\
					 & \point{r_2} \ar[r]   & \point{0}\ar@(ur,ul)[]   &   \point{r_1} \ar[l] &   &  \\
					 \point{r_1}\ar[ru] &    &    &     &     \point{r_2} \ar[lu] &
				}
			\]
As a consequence, each connected component of this kind increases the number of twisting maps by at most a factor of $2^c$, where $c$ is the number of immediate predecessors of the loop vertex. Of course, if $q$ has no roots in $k$, the only connected components of this kind that may show up are strict 1-cycles.
		\item If $Q_i$ is a strict 2-cycle, any valid coloration must satisfy
			\[
				\xymatrix{*+[o][F-]
				{
					\text{\scriptsize $a$}} \ar@/^4pt/@<0.5 ex>[r] & *+[o][F-]{\text{\scriptsize $b$}} \ar@/^4pt/@<0.5 ex>[l]
				} \hspace{0.5cm} \text{where} \hspace{0.5cm} a+b=-\alpha.
			\]
			 Connected components of this kind give rise to a one-parameter family of twisting maps, the parameter ranging in the field,
		\item\label{bla} If $Q_i$ is either a non-strict 2-cycle or a $s$-cycle with $s> 2$, any valid coloration is determined by choosing a coloration of the central cycle. Each vertex of the central cycle must be colored by roots of $q$ in alternating order, meaning that, if a vertex is colored by a root $r_1$, any immediate predecessor and the immediate successor of such vertex are colored by $r_2=-\alpha-r_1$.
Moreover, for any vertex $e$ in the cycle, the tree attached to $e$ is also colored by alternating roots of $q$; i.e. if $e$ is colored by
$r_1$, any other vertex $d$ in the same tree as $e$ must be colored by
$r_1$ if $t$ is even, and by $r_2=-\alpha-r_1$ if $t$ is odd, where
$t$ is the length of the path from $d$ to $e$.
			\[
				\xy
				\xymatrix@C=20pt@R=10pt{
					& \point{r_1} \ar[ld] & \point{r_2} \ar[l] & \point{r_1} \ar[l] &  &  & &\point{r_1} \ar[ld] &  \point{r_2}\ar[l]\\
 					\point{r_2} \ar@{.}[rd]  &  &    &   & \point{r_2} \ar[lu]   & \point{r_1} \ar[l] & \point{r_2} \ar[l]& & \\
					& \point{r_1} \ar[r] & \point{r_2} \ar[r] & \point{r_1} \ar[ru] &  &  & & \point{r_1} \ar[lu]&
				}
				\endxy
			\]
     As a consequence, if $s$ is even, there are as many colorations
     as different roots of $p$; and, if $s$ is odd, there exists a (unique) coloration if, and only if, $p$ has a unique root.
In particular, this kind of components increase the number of twisting maps by at most a factor of $2$.
	\end{enumerate}
\end{theorem}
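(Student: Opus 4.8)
The plan is to reduce the global statement to a local analysis on each connected component of $Q_f$, which is legitimate because equations \eqref{eq:3} and \eqref{eq:4}, when read off at a vertex $e_i$, only involve the colors $a_i$, $a_j$ for $\varphi(e_j)=e_i$, and $a_k$ for $\varphi^2(e_k)=e_i$, all of which live in the same connected component as $e_i$. Thus a coloration of $Q_f$ is valid if and only if its restriction to each component is valid, and it suffices to describe valid colorations of a single $s$-cycle. By Lemma (the one right before Proposition \ref{prop:1}) every component is an $s$-cycle for some $s\geq 1$, so there are exactly three cases to treat: the $1$-cycle, the strict $2$-cycle (already done in Proposition \ref{prop:1}(1)), and everything else, i.e. non-strict $2$-cycles and $s$-cycles with $s>2$.

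For case (2), the strict $2$-cycle, nothing new is required: this is precisely Proposition \ref{prop:1}(1), whose proof shows equations \eqref{eq:5}–\eqref{eq:6} hold iff $a_i+a_j=-\alpha$, giving the one-parameter family. For case (3), I would argue in two stages. First, for a vertex $e_i$ lying on the central cycle, the only predecessor contributing to \eqref{eq:3} and the only $\varphi^2$-preimage are again cycle vertices, so the same computation as in Proposition \ref{prop:1}(2) gives $q(a_i)=0$ from \eqref{eq:3} and $a_i+a_j+\alpha=0$ (with $e_j$ the immediate predecessor) from \eqref{eq:4}; the successor relation follows symmetrically. This forces the alternating-roots pattern around the cycle. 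Second, for a vertex $d$ in a tree attached to the cycle but not on the cycle: since trees have ascendent orientation, $d$ has a well-defined image $\varphi(d)$ one step closer to the cycle, possibly some predecessors further out, and — crucially — $d$ is not the $\varphi$-image or $\varphi^2$-image of any cycle vertex unless $\varphi(d)$ or $\varphi^2(d)$ equals the relevant $e_i$. Writing \eqref{eq:3} at the vertex $\varphi(d)$ (or better, at $d$ itself, tracking the term $e_d$) yields $q(a_d)=0$, and \eqref{eq:4} at the appropriate vertex yields $a_d + a_{\varphi(d)} + \alpha = 0$, i.e. $a_d = -\alpha - a_{\varphi(d)}$; iterating down the path from $d$ to the cycle gives the stated dependence on the parity of the path length $t$. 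One must also check these are the only constraints — i.e. any alternating coloration genuinely satisfies \eqref{eq:3}–\eqref{eq:4} at every vertex — which is a direct substitution once one notes $q(r)=0$ together with $r_1+r_2=-\alpha$ (Vieta, since $q(x)=x^2+\alpha x+\beta$) makes every summand vanish. Case (1), the $1$-cycle, is the same analysis with the degenerate central cycle being a single loop vertex $e$: the normalization $a_e=0$ is forced, \eqref{eq:3} at $e$ reads $q(a_p)=0$ for every immediate predecessor $p$ of $e$, \eqref{eq:4} relating $p$ to $e$ gives $a_p + 0 + \alpha = -\alpha - a_p$... wait — here one must be careful: the relation should read off as $a_p = -\alpha - a_e = -\alpha$ only if $a_p$ were determined by the successor rule, but $e$ is a loop so the predecessor $p$ of $e$ is free to be either root of $q$; I would verify that \eqref{eq:4} at $e$ imposes no constraint linking distinct predecessors, so each of the $c$ predecessors independently picks a root of $q$, and then each attached tree propagates by parity exactly as in case (3).

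Finally, I would assemble the counting consequences: the multiplicativity of the number of twisting maps over components follows from the first reduction step; for a $1$-cycle the free choices are the $c$ predecessors each with at most $2$ options (at most $2^c$); for a strict $2$-cycle the choice of $a\in k$ with $b=-\alpha-a$ gives the one-parameter family; for the remaining cycles the whole component is rigid once one cycle vertex is colored, and the consistency of going all the way around an $s$-cycle forces: if $s$ is even, any root of $q$ works (equivalently any root of $p$, since the alternating pattern $r_1, r_2=-\alpha-r_1$ closes up), and if $s$ is odd, closing the cycle forces $r_1 = r_2$, i.e. $q$ (equivalently $p$) has a double root, whence a unique coloration — at most factor $2$ in all cases. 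The main obstacle, and the step needing the most care, is the bookkeeping in case (1)/(3) for tree vertices: correctly identifying, for a given tree vertex $d$, at which vertex of $Q_f$ to evaluate \eqref{eq:3} and \eqref{eq:4} so as to extract a clean equation in $a_d$ and $a_{\varphi(d)}$ alone, and confirming that no tree vertex is inadvertently constrained twice in an incompatible way; once the local equations are pinned down, the parity propagation and the even/odd cycle dichotomy are routine applications of Vieta's formulas for $q$.
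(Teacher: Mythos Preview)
Your proposal is correct and follows the same approach as the paper: reduce to connected components, cite Proposition~\ref{prop:1} for the strict $2$-cycle, and in the remaining cases extract $q(a_i)=0$ from the $e_i$-component of \eqref{eq:3} and the alternation $a_i+a_{\varphi(e_i)}+\alpha=0$ from \eqref{eq:4}, then propagate along the attached trees by parity. One small correction: in case~(3) a cycle vertex of a \emph{non-strict} cycle can also have tree predecessors, so your claim that ``the only predecessor contributing \ldots\ are again cycle vertices'' is not literally true---but since the relation $q(a_i)=0$ comes from the $e_i$-component of \eqref{eq:3} alone (which for $s>2$ involves no predecessors at all) the conclusion is unaffected; your explicit verification that \eqref{eq:4} at the loop vertex imposes no constraint linking its immediate predecessors (so each independently chooses a root of $q$, giving the $2^c$ bound) is a point the paper leaves implicit.
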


\begin{proof}
The second case is proven in Proposition \ref{prop:1}. Let us suppose
that $Q_f$ is a 1-cycle. The loop vertex is colored by 0 by the
arguments given above. Since any of the other vertices $e_i$ is not
inside a cycle, by applying the same arguments as in Proposition
\ref{prop:1}, we obtain that $q(a_i)=0$ and thus all the vertices
remaining are colored by a root of $q$. Now, let us consider an
arrow $\xymatrix{\point{e_i} \ar[r] & \point{e_j}}$ inside one of
the trees engaged to the loop vertex. Then (\ref{eq:4}) in the
component $i$ provides us the equation $a_i+a_j+\alpha=0$. That is,
for one of these trees, if one level is colored by a root $r_1$, the
next level is colored by $-\alpha-r_1$, see Figure \ref{arbolito}.
\begin{figure}[h]
        \[
 \xymatrix@C=15pt@R=15pt{\ar@{.}[rrr] & &                   & \point{r_1}  \ar@{.}[rrr]           &
         &     &       \\
 \ar@{.}[rr]    &        & \point{r_2} \ar[ur] \ar@{.}[rr]  &      & \point{r_2} \ar[ul]  \ar@{.}[rr] &
             & \\
 \ar@{.}[r] & \point{r_1} \ar[ru]    \ar@{.}[rr]    &                   & \point{r_1}
 \ar[lu]  \ar@{.}[rr]
          &            & \point{r_1} \ar[ul] \ar@{.}[r]  &   &  \text{where $r_2=-\alpha-r_1$}\\
 \ar@{.}[rrr] &           &
  & \point{r_2} \ar[u]      \ar@{.}[rr]        &            & \point{r_2} \ar[u]  \ar@{.}[r] & \\
 \ar@{.}[rr]  &          & \point{r_1} \ar[ur]  \ar@{.}[rr] &       & \point{r_1}
 \ar[lu]  \ar@{.}[rr]
   & &
             \\}
        \]
    \caption{Coloration of a tree engaged to the cycle\label{arbolito}}
    \end{figure}

Finally, suppose that $Q_f$ is as stated in $(\ref{bla})$. Following
the same proof as in  Proposition \ref{prop:1}, we deduce that the
central cycle is colored by choosing to each vertex a root of $q$
and, if a vertex is colored by a root $r_1$, the immediate
     predecessor and the immediate successor of such
      vertex are colored by $-\alpha-r_1$. Now, applying the above
      reasoning to any of the trees engaged to a vertex of the
      cycle, we get the statement of $(\ref{bla})$. It is clear that
      these conditions are sufficient to have a coloration, thus the
      proof is completed.
\end{proof}

%
%
%
%

\begin{corollary}\label{cor:1} Let $p\in k[x]$ be an irreducible polynomial of degree two.
 The set of twisting maps $\tau : k^{n} \otimes k[x]/(p(x)) \ra k[x]/(p(x)) \otimes
k^{n}$  is in one-to-one correspondence with the set of colored
quivers $Q_{f}$ such that each connected component is either a
single loop vertex, or a round-trip quiver
$\xymatrix{*+[o][F-]{\text{\scriptsize $a$}} \ar@/^4pt/@<0.5 ex>[r]&
*+[o][F-]{\text{\scriptsize $b$}} \ar@/^4pt/@<0.5 ex>[l]}$
\hspace{0.05cm} where $a+b=-\alpha$.
\end{corollary}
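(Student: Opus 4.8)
The plan is to obtain this as a direct specialization of Theorem \ref{th:1}, by simply checking which of the component types listed there can survive when $p$ is irreducible. The first point I would record is that, under this hypothesis, $q$ also has no root in $k$: the roots of $q$ in $\bar{k}$ are by definition the negatives of the roots of $p$, so $q$ has a root in $k$ if and only if $p$ does, and hence $p$ irreducible forces $q$ irreducible as well. This is the only nontrivial input; everything else is bookkeeping.

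Next I would go through the three cases of Theorem \ref{th:1}. If a connected component $Q_i$ is a $1$-cycle, the theorem requires every immediate predecessor of the loop vertex to be colored by a root of $q$; since $q$ has no root in $k$, the loop vertex can have no immediate predecessors, and therefore $Q_i$ must be a strict $1$-cycle, i.e.\ a single vertex with a loop. If $Q_i$ is a non-strict $2$-cycle or an $s$-cycle with $s>2$, the theorem requires each vertex of the central cycle to be colored by a root of $q$, which is again impossible; so no component of this type can occur. The only remaining case is that of a strict $2$-cycle, where Theorem \ref{th:1} imposes exactly the condition $a+b=-\alpha$ on the colors of the two vertices, and this is precisely the round-trip quiver appearing in the statement.

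Assembling these observations, a colored quiver $Q_f$ corresponds to a twisting map $\tau$ if and only if each connected component is either a single loop vertex or a round-trip quiver whose two colors sum to $-\alpha$, which is the claimed bijection. I do not anticipate any real obstacle: the argument is a routine case analysis on top of Theorem \ref{th:1}, and the only step that deserves to be written out is the elementary remark that irreducibility of $p$ is equivalent to irreducibility of $q$, since it is this fact that kills every component except strict $1$-cycles and strict $2$-cycles.
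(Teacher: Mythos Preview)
Your proof is correct and follows essentially the same approach as the paper: both arguments specialize Theorem \ref{th:1} by observing that when $p$ has no root in $k$, neither does $q$, which rules out every component type except strict $1$-cycles and strict $2$-cycles. Your version is slightly more explicit in spelling out why $q$ inherits irreducibility from $p$ and in checking each case of Theorem \ref{th:1} separately, but the content is the same.
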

\begin{proof}
By Theorem \ref{th:1}, if a connected component of $Q_f$ is neither
a strict 1-cycle nor a strict 2-cycle, the vertices are labeled by a
root of $q$ in $k$. Therefore there is no derivations for an algebra
map $f$ producing such components. Thus all components of $f$ must
be either a single loop vertex or a round-trip quiver. Now, apply
Theorem \ref{th:1} for such cases.
\end{proof}

%

\begin{corollary} Let $p\in k[x]$ be a polynomial of degree two such that it has a unique root $s$ in $k$.
 The set of twisting maps $\tau : k^{n} \otimes k[x]/(p(x)) \ra k[x]/(p(x)) \otimes
k^{n}$  is in one-to-one correspondence with the set of colored
quivers $Q_{f}$ such that:
\begin{enumerate}
	\item $Q_f$ is the quiver associated to a set map $\{1,\ldots
,n\}\rightarrow \{1,\ldots ,n\}$.
	\item The coloration of a connected component different of a
round-trip quiver is given by putting $0$ in the loop vertex and
$-s$ in the others.
	\item A round-trip connected component is colored by
 $\xymatrix{*+[o][F-]{\text{\scriptsize $a$}}
\ar@/^4pt/@<0.5 ex>[r]&
*+[o][F-]{\text{\scriptsize $b$}} \ar@/^4pt/@<0.5 ex>[l]}$,
\hspace{0.05cm} where $a+b=-2s$.
\end{enumerate}
\end{corollary}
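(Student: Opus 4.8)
The plan is to specialize Theorem \ref{th:1} to the hypothesis at hand. Since $p$ has a unique root $s\in k$, we have $p(x)=(x-s)^2=x^2-2sx+s^2$, so that $\alpha=2s$ and $\beta=s^2$; consequently $q(x)=x^2+\alpha x+\beta=(x+s)^2$ has $-s$ as its unique root. The first item of the statement is nothing but the standard correspondence between algebra endomorphisms $f$ of $k^n$ and set maps $\{1,\dots,n\}\to\{1,\dots,n\}$ recalled in \eqref{7}, so it always holds; the real content of the corollary is to unravel what the colorings prescribed by Theorem \ref{th:1} become in this case. Recall also that, by the observation preceding Proposition \ref{prop:1}, a coloring of $Q_f$ is determined componentwise, so it suffices to analyze each connected component.

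First I would treat every component that is not a round-trip quiver (i.e. not a strict $2$-cycle). In all such cases Theorem \ref{th:1} forces the non-loop vertices to be colored by roots of $q$ subject to the alternation rule that a vertex colored $r_1$ has its neighbours colored $r_2=-\alpha-r_1$. But here the only root of $q$ is $-s$, and $-\alpha-(-s)=-2s+s=-s$, so the alternation collapses: every non-loop vertex must be colored $-s$, while a loop vertex carries $0$ by the normalization convention preceding \eqref{3}. This yields item (2). One must also check that no component of this type is actually excluded: for a central cycle of even length Theorem \ref{th:1} allows ``as many colorings as distinct roots of $p$'', which is exactly one here, and for odd length a (unique) coloring exists precisely because $p$ has a double root; in both cases it is the constant coloring $-s$, so the description is complete.

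Next I would handle the round-trip (strict $2$-cycle) components: by part (2) of Theorem \ref{th:1} a valid coloring is any pair $(a,b)$ with $a+b=-\alpha=-2s$, which is item (3), a one-parameter family indexed by the field. Assembling the components then gives the claimed one-to-one correspondence between twisting maps $\tau:k^n\otimes k[x]/(p(x))\to k[x]/(p(x))\otimes k^n$ and colored quivers $Q_f$ as described.

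I do not expect a genuine obstacle here: the corollary is a direct specialization of Theorem \ref{th:1}. The only points requiring a little care are verifying that the ``alternating roots of $q$'' prescription degenerates to the constant coloring $-s$ when $q=(x+s)^2$ has a double root, and checking that the parity caveats in Theorem \ref{th:1} (the distinction between odd- and even-length cycles) never rule out a component once $p$ is assumed to have a double root.
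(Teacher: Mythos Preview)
Your proposal is correct and follows exactly the intended route: the paper states this corollary without proof, leaving it as an immediate specialization of Theorem \ref{th:1}, and your argument carries out precisely that specialization, including the verification that the alternation rule collapses to the constant coloring $-s$ and that the parity constraints never exclude a component when $p$ has a double root.
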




Now we are going to describe the isomorphism classes of the algebras
that we have obtained. There is no loss of generality on assuming
that the quiver $Q_f$ is connected; otherwise we reason over every
connected component, and the resulting algebra will be the direct
product of the algebras associated to the distinct components (cf.
\cite{cibils}). As the reader should note, the case of a round-trip
quiver becomes very special. For that reason we study this case
separately.

\begin{proposition}\label{prop:2} Let $p(x)=x^2-\alpha x+\beta\in k[x]$ be a polynomial of degree two.
The twisted tensor product $k^{2} \otimes_{(f,\delta)}k[x]/(p(x))$,
where $(f,\delta)$ is the twisting map associated to the quiver
$$\xymatrix{\point{1} \ar@/^4pt/@<0.5
ex>[r]& \point{2} \ar@/^4pt/@<0.5 ex>[l]} \quad \text{colored by}
\quad \text{$\xymatrix{*+[o][F-]{\text{\scriptsize $a$}}
\ar@/^4pt/@<0.5 ex>[r]&
*+[o][F-]{\text{\scriptsize $b$}} \ar@/^4pt/@<0.5 ex>[l]}$,
\hspace{0.05cm} where $a+b=-\alpha$,}$$ is isomorphic to one of the
following algebras:
\begin{enumerate}
\item The matrix ring $\mathcal{M}_{2}(k)$ if $a$ and $b$ are not roots of $q$.
\item The quotient algebra $kQ_{< 2}$, otherwise, where $Q$ is the
round-trip quiver.
\end{enumerate}
\end{proposition}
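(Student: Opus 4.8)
The plan is to study $X:=k^{2}\otimes_{(f,\delta)}k[x]/(p(x))$ through its Peirce decomposition relative to the orthogonal idempotents $P_{1}=e_{1}\otimes 1$ and $P_{2}=e_{2}\otimes 1$ (note $P_{1}+P_{2}=1$), exhibiting an explicit basis that behaves either as a set of matrix units or as the basis of a truncated path algebra.

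First I would make the twisting map explicit. For the round-trip quiver, $f$ is the algebra automorphism of $k^{2}$ interchanging $e_{1}$ and $e_{2}$, and formula \eqref{3} with the coloration $(a,b)$ gives $\delta(e_{1})=-a e_{1}+b e_{2}=-\delta(e_{2})$. Writing $x$ also for $1\otimes x\in X$, and using $(e_{i}\otimes 1)x=e_{i}\otimes x$ and $x(e_{i}\otimes 1)=\delta(e_{i})\otimes 1+f(e_{i})\otimes x$, one computes the four Peirce components. The diagonal ones are $P_{1}xP_{1}=-aP_{1}$ and $P_{2}xP_{2}=-bP_{2}$, so each $P_{i}XP_{i}=kP_{i}$; the off-diagonal ones are $E_{12}:=P_{1}xP_{2}=a(e_{1}\otimes 1)+e_{1}\otimes x$ and $E_{21}:=P_{2}xP_{1}=b(e_{2}\otimes 1)+e_{2}\otimes x$. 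Since $x=-aP_{1}-bP_{2}+E_{12}+E_{21}$, the four elements $P_{1},P_{2},E_{12},E_{21}$ span $X$, and a $4\times 4$ determinant computation in the basis $\{e_{1}\otimes 1,e_{2}\otimes 1,e_{1}\otimes x,e_{2}\otimes x\}$ shows they form a basis. They satisfy the expected Peirce relations: the $P_{i}$ are orthogonal idempotents summing to $1$, $P_{1}E_{12}=E_{12}=E_{12}P_{2}$, $P_{2}E_{21}=E_{21}=E_{21}P_{1}$, and $E_{12}P_{1}=P_{2}E_{12}=E_{21}P_{2}=P_{1}E_{21}=0$; in particular $E_{12}^{2}=E_{21}^{2}=0$.

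The heart of the matter is the product $E_{12}E_{21}$. Expanding it and using $e_{1}e_{2}=0$ in $k^{2}$ together with $x^{2}=\alpha x-\beta$ in $k[x]/(p(x))$, one obtains
\[
 E_{12}E_{21}=(ab-\beta)(e_{1}\otimes 1)+(a+b+\alpha)(e_{1}\otimes x),
\]
and the coloration condition $a+b=-\alpha$ annihilates the last term, so $E_{12}E_{21}=(ab-\beta)P_{1}$; symmetrically (or by associativity applied to $E_{12}E_{21}E_{12}$) one gets $E_{21}E_{12}=(ab-\beta)P_{2}$. I would then record the elementary identity $q(a)=a^{2}+\alpha a+\beta=a^{2}-(a+b)a+\beta=\beta-ab=q(b)$, which shows that ``$a$ and $b$ are not roots of $q$'' is equivalent to $ab\neq\beta$, i.e.\ to $E_{12}E_{21}\neq 0$.

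It then remains to separate the two cases. If $ab\neq\beta$, replacing $E_{12}$ by $(ab-\beta)^{-1}E_{12}$ turns $\{P_{1},P_{2},E_{12},E_{21}\}$ into a complete set of $2\times 2$ matrix units (all relations $E_{ij}E_{kl}=\delta_{jk}E_{il}$ follow from the Peirce relations together with the computed products), and since it is a basis of $X$ we conclude $X\cong\mathcal{M}_{2}(k)$. If $ab=\beta$ then $E_{12}E_{21}=E_{21}E_{12}=0$, and the basis $\{P_{1},P_{2},E_{12},E_{21}\}$ satisfies precisely the defining relations of $kQ_{<2}$ for $Q$ the round-trip quiver: identify $e_{1},e_{2}$ with $P_{1},P_{2}$, the two arrows with $E_{21}$ and $E_{12}$, and the two length-two paths with the vanishing products $E_{21}E_{12}$ and $E_{12}E_{21}$; the corresponding assignment is then an algebra isomorphism $kQ_{<2}\cong X$. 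The two cases are genuinely disjoint since $\mathcal{M}_{2}(k)$ is simple whereas $kQ_{<2}$ has nonzero radical $kE_{12}\oplus kE_{21}$. The only real obstacle is the bookkeeping in the twisted product $E_{12}E_{21}$: one must carry $x^{2}=\alpha x-\beta$ through carefully and observe that the $e_{1}\otimes x$ component cancels exactly because $a+b+\alpha=0$.
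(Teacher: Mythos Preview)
Your proof is correct and, despite the Peirce--decomposition packaging, is essentially the same argument as the paper's: your off-diagonal elements $E_{12}=e_{1}x+ae_{1}$ and $E_{21}=e_{2}x+be_{2}$ are exactly the images $S=e_{1}(x+a)$ and $R=e_{2}(x+b)$ that the paper writes down in case~(2), and your matrix-unit isomorphism in case~(1) differs from the paper's explicit matrix for $x$ only by which of $E_{12},E_{21}$ absorbs the factor $(ab-\beta)^{-1}=-q(a)^{-1}$. The one mild improvement in your write-up is that you treat both cases uniformly through the single computation $E_{12}E_{21}=(ab-\beta)P_{1}$ and the identity $q(a)=q(b)=\beta-ab$, whereas the paper handles the two cases with separate ad~hoc maps; this makes the dichotomy between $\mathcal{M}_{2}(k)$ and $kQ_{<2}$ more transparent.
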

\begin{proof}
Let us assume that $a$ and $b$ are not roots of $q$. In order to
show the isomorphism, it is enough to give the image of $e_1\otimes
1$, $e_2\otimes 1$ and $1\otimes x$ which, for brevity, we denote
simply by $e_1$, $e_2$ and $x$, respectively. Since $e_1+e_2=1$, we
map
$$
 e_1 \mapsto \left(\begin{array}{cr}
                    1 & 0 \\
                    0 & 0
                  \end{array}\right) \qquad
      e_2  \mapsto  \left(\begin{array}{cc}
                                         0 & 0 \\
                                         0 & 1
                                       \end{array}\right) \qquad
                                       x  \mapsto  \left(\begin{array}{cc}
                                         -a & 1 \\
                                         -q(a) & -b
                                   \end{array}\right)
$$
Since $q(a)\neq 0$, this provides us an isomorphism of algebras.

Now, suppose that $a$ and $b$ are roots of $q$. Let us denote the
elements of $Q$ as follows:
	\[
		\xymatrix@C=40pt{
			\point{u} \ar@/^4pt/@<0.5ex>[r]^-{R} & \point{v} \ar@/^4pt/@<0.5 ex>[l]^-{S}
		}
	\] 
Then we consider the isomorphism of $k$-algebras $\Phi: kQ_{<2}
\rightarrow k^{2} \otimes_{(f,\delta)}k[x]/(p(x))$ given by:
	\[ 
		u\mapsto e_1 \qquad v\mapsto e_2 \qquad R\mapsto e_2(x+b) = (x+a)e_1  \quad \text{and} \quad S\mapsto e_1(x+a) = (x+b)e_2
	\]
This is clearly well-defined. For instance, observe that
$$\Phi(R)\Phi(S)=(e_2x+be_2)(e_1x+ae_1)=-q(a)e_2=0,$$ as $a$ is a
root of $q$. This completes the proof.
\end{proof}

For the convenience of the reader, in order to state the main result of this section, we remind the following quiver-transformation introduced by Cibils
\cite{cibils}. Let us consider an 1-cycle $Q$ colored by means of the procedure explained above. Then we construct the forest $\widehat{Q}$ obtained as follows:
\begin{itemize}
\item Remove the loop vertex and all the arrows ending at it. This produces a finite number of trees.
\item Insert two isolated vertices, numbered by 1 and 2.
\item For each tree of the ones obtained above, insert an arrow from the root node to $1$ if that vertex is colored by $r_1$, or to $2$ if it is colored by $r_2$.
\end{itemize}
\begin{figure}[h]
        \[\xy
 \xymatrix@C=15pt@R=15pt{& &                   & \point{0} \ar@(ur,ul)[]         &
         &     &       \\
 \ar@{.}[rr]    &        & \point{r_1} \ar[ur] \ar@{.}[r]  &  \point{r_2} \ar[u]   \ar@{.}[r] & \point{r_2} \ar[ul]  \ar@{.}[rr] &
             & \\
& \point{r_2} \ar[ru]     &   \point{r_2}  \ar[u]            &
 \point{r_1} \ar[u]
          &            & \point{r_1} \ar[ul]  &   &  \\
  &           & \point{r_1} \ar[u]
  &            &  \point{r_2} \ar[lu]          & \point{r_2} \ar[u]  & \\
   &  \point{r_2} \ar[ur]        & \point{r_2}
 \ar[u]   &       & \point{r_1} \ar[u]
   & \point{r_1} \ar[lu] &
             \\}
 \endxy
\xy  (10,-20)*+{\Longrightarrow} \endxy \quad
 \xy
 \xymatrix@C=15pt@R=15pt{  &   &     \point{1}              &          &
  \point{2}       &     &       \\
 \ar@{.}[rr]    &        & \point{r_1}  \ar@{.}[r] \ar[u]  &  \point{r_2}  \ar[ru]  \ar@{.}[r] & \point{r_2}  \ar[u] \ar@{.}[rr] &
             & \\
& \point{r_2} \ar[ru]     &   \point{r_2}  \ar[u]            &
 \point{r_1} \ar[u]
          &            & \point{r_1} \ar[ul]  &   &  \\
  &           & \point{r_1} \ar[u]
  &            &  \point{r_2} \ar[lu]          & \point{r_2} \ar[u]  & \\
   &  \point{r_2} \ar[ur]        & \point{r_2}
 \ar[u]   &       & \point{r_1} \ar[u]
   & \point{r_1} \ar[lu] &
             \\}
       \endxy \]
    \caption{Cibils' transformation}
    \end{figure}

\begin{theorem}\label{th:qduplicateskn} Let $p(x)=x^2-\alpha x+\beta\in k[x]$ be a polynomial of degree two.
The twisted tensor product $R=k^{n} \otimes_{(f,\delta)}k[x]/(p(x))$,
where $(f,\delta)$ is the twisting map associated to a connected colored quiver $Q$,  is isomorphic to
the following algebra:
\begin{enumerate}[$(a)$]
\item When $Q$ is an strict 2-cycle,
\begin{enumerate}[$(i)$]
\item If $Q$ is colored by roots of $q$, then $R\cong kQ_{< 2}$.
\item Otherwise $R\cong \mathcal{M}_2(k)$.
\end{enumerate}
\item When $Q$ is a 1-cycle,
\begin{enumerate}[$(i)$]
\item If $p$ has no root, $n=1$ and therefore $R\cong k[x]/(p(x))$.
\item If $p$ has a single root, then $R\cong k(Q^{op})_{< 2}$.
\item If $p$ has two different roots, then $R\cong k((\widehat{Q})^{op})_{< 2}$.
\end{enumerate}
\item Otherwise, $R\cong k(Q^{op})_{< 2}$.
\end{enumerate}
\end{theorem}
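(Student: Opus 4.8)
The plan is the following. As observed just before the statement we may assume $Q=Q_{f}$ is connected, hence an $s$-cycle for some $s\geq 1$, and we argue case by case; case $(a)$ is exactly Proposition \ref{prop:2}, so nothing remains to be done there. For the remaining cases the strategy is to produce an explicit algebra morphism
\[
	\Psi:k\big((Q')^{\op}\big)_{<2}\longrightarrow R=k^{n}\otimes_{(f,\delta)}k[x]/(p(x)),
\]
with $Q'=Q$ in cases $(b)(i),(b)(ii),(c)$ and $Q'=\widehat{Q}$ in case $(b)(iii)$, sending each vertex to an idempotent and each arrow to an element of the appropriate corner of $R$ of the form $e_{i}\otimes(x+c)$. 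Since $Q_{f}$ has exactly one arrow leaving every vertex, $\dim_{k}k(Q^{\op})_{<2}=n+n=2n=\dim_{k}R$; and removing the loop vertex together with its $c+1$ incoming arrows while adjoining two vertices and $c$ new arrows leaves this count unchanged, so $\dim_{k}k((\widehat{Q})^{\op})_{<2}=2n$ as well. Hence in each case it will be enough to verify that $\Psi$ is a well-defined algebra morphism and that it is surjective.

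The computational core is the identity, valid for every arrow $e_{i}\to e_{j}$ of $Q_{f}$ (so $j=\varphi(i)$) with $i\neq j$,
\[
	\big(e_{i}\otimes(x+a_{i})\big)\big(e_{j}\otimes(x+a_{j})\big)=(a_{i}+a_{j}+\alpha)(e_{i}\otimes x)+(a_{i}a_{j}-\beta)(e_{i}\otimes 1),
\]
obtained from $\tau(x\otimes e_{k})=\delta(e_{k})\otimes 1+f(e_{k})\otimes x$, the formulae \eqref{7}, \eqref{3} for $f$ and $\delta$, and the relation $x^{2}=\alpha x-\beta$; moreover, when no vertex is a loop, the element $\eta_{i}:=e_{i}\otimes(x+a_{i})$ lies in the corner $e_{i}Re_{\varphi(i)}$, and $\eta_{i}\eta_{j}=0$ whenever $j\neq\varphi(i)$ by orthogonality of the idempotents $e_{l}\otimes 1$. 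Thus $\eta_{i}\eta_{j}$ vanishes for \emph{all} $i,j$ as soon as each colour is a root of $q$ and consecutive colours add up to $-\alpha$, since then $a_{i}+a_{\varphi(i)}+\alpha=0$ while $a_{i}a_{\varphi(i)}=\beta$ follows from $q(a_{i})=0$. By Theorem \ref{th:1} this is precisely the shape of the colouring in case $(c)$ (both along the central cycle and along the attached trees), so $e_{i}\mapsto e_{i}\otimes 1$ together with $(\text{arrow }\varphi(i)\to i)\mapsto\eta_{i}$ defines a morphism $\Psi$; it is surjective because $e_{i}\otimes x=\eta_{i}-a_{i}(e_{i}\otimes 1)$, and the dimension count yields $R\cong k(Q^{\op})_{<2}$. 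This settles case $(c)$.

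In case $(b)$ the only new feature is the loop vertex $e$, whose colour is normalized to $0$; here the starting point is that $eRe$ is spanned by $e\otimes 1$ and $e\otimes x$ and that $p(e\otimes x)=0$, so $eRe\cong k[x]/(p(x))$ as a unital algebra. If $p$ has no root (case $(b)(i)$), Theorem \ref{th:1} forces the component to be a strict $1$-cycle, so $n=1$, $f=\Id$, $\delta=0$, and $R=k[x]/(p(x))$. If $p$ has a double root $s$ (case $(b)(ii)$), then $\alpha=2s$, $\beta=s^{2}$ and the colouring is $0$ on $e$ and $-s$ elsewhere; one checks that $y:=1\otimes(x-s)$ satisfies $y^{2}=0$ (since $p(x)=(x-s)^{2}$) and, using \eqref{3}, that $ye_{j}=f(e_{j})y$ for every $j$. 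Consequently the elements $e_{i}y$ lie in the corners $e_{i}Re_{\varphi(i)}$ (the loop mapping to the square-zero element $e\cdot y\in eRe$) and satisfy $(e_{i}y)(e_{j}y)=e_{i}f(e_{j})y^{2}=0$; the assignment $e_{i}\mapsto e_{i}\otimes 1$, $(\text{arrow }\varphi(i)\to i)\mapsto e_{i}y$ then gives a well-defined surjective $\Psi$, hence $R\cong k(Q^{\op})_{<2}$.

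Finally, if $p$ has two distinct roots (case $(b)(iii)$), then $eRe\cong k\times k$, so $e=\epsilon_{1}+\epsilon_{2}$ splits into the two primitive idempotents built from $e\otimes x$; computing $\eta_{i}\epsilon_{t}$ for an immediate predecessor $i$ of $e$ shows that $\eta_{i}$ lands in $e_{i}R\epsilon_{t}$ for exactly one $t\in\{1,2\}$, determined by which root of $q$ the colour $a_{i}$ is --- which is exactly the rule defining Cibils' transformation $\widehat{Q}$. Sending the two new vertices to $\epsilon_{1},\epsilon_{2}$, the remaining vertices to the $e_{i}\otimes 1$ and the arrows to the elements $\eta_{i}$ (including the ones leaving the split vertices), the vanishing $\eta_{i}\eta_{j}=0$ together with the orthogonality of $\{e_{i}\otimes 1\}\cup\{\epsilon_{1},\epsilon_{2}\}$ makes $\Psi$ a well-defined morphism from $k((\widehat{Q})^{\op})_{<2}$; it is surjective since $e\otimes x$ is a linear combination of $\epsilon_{1},\epsilon_{2}$ and each $e_{i}\otimes x$ is recovered from $\eta_{i}$, and the dimension count finishes the argument. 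I expect the main obstacle to be precisely this last case: one must make the combinatorics of Cibils' forest $\widehat{Q}$ fit the algebraic splitting of the corner $eRe$, keeping careful track of the correspondence between the two roots of $q$ and the two new vertices and of the fact that no new relations are created at the split.
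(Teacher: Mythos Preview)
Your proposal is correct and follows essentially the same strategy as the paper: in each case you build an explicit algebra map from the truncated path algebra of $(Q')^{\op}$ to $R$ by sending vertices to the idempotents $e_i\otimes 1$ (respectively the split idempotents $\epsilon_1,\epsilon_2$ in case $(b)(iii)$) and arrows to elements of the form $e_i\otimes(x+a_i)$, exactly as the paper's morphisms $\Phi$ and $\widehat{\Phi}$ do. Your use of the dimension count in place of an explicit kernel computation, and of the global relation $ye_j=f(e_j)y$ with $y=1\otimes(x-s)$ in case $(b)(ii)$, are minor organizational variants that in fact make the verification at the loop vertex a bit cleaner than the paper's sketch.
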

\begin{proof}

$(a)$ is Proposition \ref{prop:2}. $(b)-(i)$ is given by Corollary \ref{cor:1} since $Q$ must be a single loop vertex. Then the statement follows trivially.

Let us prove $(b)-(ii)$ and $(c)$. We remind that each vertex in $Q^{\op}$ is the target of a unique arrow.
For any vertex $e_i$ in $Q^{\op}$, the twisting map given by $(f, \delta)$, say $\tau$, verifies that
\begin{equation}\label{eq:7} \tau(x\otimes e_i)=\delta(e_i)\otimes 1+f(e_i)\otimes x=\displaystyle\sum_{e_j}\epsilon_j(e_j\otimes 1)-\epsilon_i(e_i\otimes 1)+\displaystyle\sum_{e_j}e_j\otimes x,\end{equation}
where the $e_j$'s are the target of the arrows starting at $e_i$ in $Q^{\op}$ and, $\epsilon_i$ and $\epsilon_j$ are the labels of $e_i$ and $e_j$, respectively. Then we define a  morphism $\Phi: kQ^{\op}\rightarrow k^n\otimes_{(f,\delta)}
k[x]/(p(x))$ as follows:
\begin{itemize}
\item For any vertex $e_i$ in $Q^{\op}$, we set
$\Phi(e_i)=e_i\otimes 1$.
\item For any arrow $\alpha_i$ in $Q^{\op}$, we set $\Phi(\alpha_i)=e_i\otimes x+\epsilon_i (e_i\otimes 1)$,
where $e_i$ is the target of $\alpha_i$ in $Q^{\op}$, and $\epsilon_i$ is its color.
\end{itemize}
It is a straightforward calculation to prove that $\Phi$ is a
surjective algebra map such that $(Q^{\op}_{\geq 2})=\Ker \Phi$, and it is left to the reader. For instance, by using (\ref{eq:7}),
$$\Phi (\alpha_i)\Phi(e_i)=(e_i\otimes x+ \epsilon_i(e_i\otimes 1))(e_i\otimes 1)=
-\epsilon_i(e_i\otimes 1)+\epsilon (e_i\otimes 1)=0$$ if $e_i$ is not a loop vertex, whilst
$$\Phi (e_i)\Phi(\alpha_i)=(e_i\otimes 1)(e_i\otimes x+ \epsilon_i(e_i\otimes 1))=
(e_i\otimes x+ \epsilon_i(e_i\otimes 1))=\Phi (\alpha_i).$$

In order to prove $(b)-(iii)$ we use a reformulation of the morphism $\Phi$. Then
we define $\widehat{\Phi}:K(\widehat{Q})^{op}\rightarrow k^n\otimes_{(f,\delta)}k[x]/(p(x))$ as follows:
\begin{itemize}
\item For any vertex $e_i\neq \xymatrix{\point{1}}, \xymatrix{\point{2}}$; we set $\widehat{\Phi}(e_i)=e_i\otimes 1$.
\item $\widehat{\Phi}(\xymatrix{\point{1}})=e_0\otimes\left (\frac{x-r_1}{r_2-r_1}\right )$  and $\widehat{\Phi}(\xymatrix{\point{2}})=e_0\otimes\left (\frac{x-r_2}{r_1-r_2}\right )$, where
$e_0$ is the loop vertex in $Q$.
\item For any arrow $\alpha_i$ that does not start neither at $\xymatrix{\point{1}}$ nor $\xymatrix{\point{2}}$, we set $\widehat{\Phi}(\alpha_i)=e_i\otimes x+\epsilon_i (e_i\otimes 1)$,
where $e_i$ is the target of $\alpha_i$ and $\epsilon_i$ is its color.
\item For any arrow $\alpha_k:\xymatrix{\point{1}}\rightarrow e_k$, we set $\widehat{\Phi}(\alpha_k)=\alpha \left (e_k \otimes \frac{x-r_1}{r_2-r_1}\right )$
\item For any arrow $\alpha_k:\xymatrix{\point{2}}\rightarrow e_k$, we set $\widehat{\Phi}(\alpha_k)=\alpha \left (e_k \otimes \frac{x-r_2}{r_1-r_2}\right )$
\end{itemize}
We leave to the reader the details of the proof, see \cite[Theorem 4.2]{cibils}
\end{proof}

\begin{corollary}\label{irreducible}
Let $p\in k[x]$ be an irreducible polynomial of degree two. Any twisted tensor product $$k^{n}
\otimes_{(f,\delta)} \overline{k}\cong \mathcal{M}_2(k)^t\times
(\overline{k})^r,$$ where $\overline{k}=k[x]/(p)$ and $t$ and $r$
are the number of strict 2-cycles and strict 1-cycles in $Q_f$,
respectively.
\end{corollary}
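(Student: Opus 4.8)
The plan is to deduce the statement entirely from Corollary \ref{cor:1} and Theorem \ref{th:qduplicateskn}, together with the reduction to connected components. First I would observe that irreducibility of $p$ forces irreducibility of $q$: the roots of $q$ are the opposites of those of $p$, so a root $\mu\in k$ of $q$ would give a root $-\mu\in k$ of $p$. Hence neither $p$ nor $q$ has a root in $k$, and Corollary \ref{cor:1} applies: every twisting map $(f,\delta)$ for $k^n\otimes\overline{k}$ corresponds to a colored quiver $Q_f$ each of whose connected components is either a single loop vertex (a strict $1$-cycle) or a round-trip quiver (a strict $2$-cycle) colored by some $a,b\in k$ with $a+b=-\alpha$.

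Next I would use the fact, recalled just before Proposition \ref{prop:2} (cf. \cite{cibils}), that $k^n\otimes_{(f,\delta)}\overline{k}$ is the direct product of the twisted tensor products attached to the connected components of $Q_f$, so that it suffices to identify the factor coming from each component. For a strict $1$-cycle the component consists of a single vertex, so part $(b)$-$(i)$ of Theorem \ref{th:qduplicateskn} gives the factor $k[x]/(p)=\overline{k}$. For a strict $2$-cycle colored by $a+b=-\alpha$, part $(a)$ of Theorem \ref{th:qduplicateskn} applies; since the two roots of $q$ also sum to $-\alpha$, the coloring would be by roots of $q$ exactly when $a$ (equivalently $b=-\alpha-a$) is a root of $q$, which is impossible as $q$ has no root in $k$. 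Thus we land in subcase $(a)$-$(ii)$ and the factor is $\mathcal{M}_2(k)$.

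Collecting the factors, a quiver $Q_f$ with $r$ loop-vertex components and $t$ round-trip components (so $n=r+2t$) yields $k^n\otimes_{(f,\delta)}\overline{k}\cong\mathcal{M}_2(k)^t\times(\overline{k})^r$, which is the assertion. A quick dimension count, $\dim_k\bigl(\mathcal{M}_2(k)^t\times(\overline{k})^r\bigr)=4t+2r=2n$, confirms consistency with the expected linear size of a quantum duplicate of $k^n$.

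I do not expect a genuine obstacle here: the argument is pure bookkeeping on top of the two quoted results. The one place where the hypothesis is actually used — and the closest thing to a subtle point — is the passage from irreducibility of $p$ to the absence of roots of $q$ in $k$, which is precisely what eliminates the competing subcase $(a)$-$(i)$ (a round-trip quiver colored by roots of $q$, giving $kQ_{<2}$ rather than $\mathcal{M}_2(k)$) and rules out non-strict $1$-cycles, forcing each $1$-cycle component down to a single loop vertex contributing a copy of $\overline{k}$.
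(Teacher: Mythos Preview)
Your proposal is correct and follows exactly the route the paper intends: the corollary is stated without proof, as an immediate consequence of Corollary \ref{cor:1} (restricting the admissible components when $p$ is irreducible) together with Theorem \ref{th:qduplicateskn} (identifying the factor contributed by each component), combined via the direct-product decomposition over connected components. Your observation that irreducibility of $p$ forces $q$ to have no roots in $k$, thereby excluding subcase $(a)$-$(i)$, is precisely the point that makes the corollary work.
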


\begin{theorem}
    Any quantum duplicate of $k^n$, $k^n \otimes_{\tau} k[k]/(p(x))$, where $p\in k[x]$ is a reducible polynomial, is isomorphic to
    an algebra of type
    \[
        (\mathcal{M}_2(k))^t \times kQ_{< 2},
    \]
    where $t$ is a natural number, and $Q$ an appropriate quiver.
\end{theorem}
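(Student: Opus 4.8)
The plan is to bootstrap from Theorem~\ref{th:qduplicateskn} by analysing $Q_f$ one connected component at a time. First I would invoke the observation made just before Proposition~\ref{prop:2}: if the quiver $Q_f$ splits into connected components $Q_1,\dots,Q_m$, then the coloration equations \eqref{eq:3}--\eqref{eq:4} decouple across these components, so a twisting map $(f,\delta)$ on $k^n$ is nothing but a choice of twisting map $(f_i,\delta_i)$ on each $k^{n_i}$, and the resulting quantum duplicate is the direct product $R\cong R_1\times\cdots\times R_m$, where $R_i=k^{n_i}\otimes_{(f_i,\delta_i)}k[x]/(p(x))$ is the quantum duplicate attached to the connected colored quiver $Q_i$. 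Everything then reduces to recognising each factor $R_i$.

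Next I would feed each $R_i$ into Theorem~\ref{th:qduplicateskn}. The hypothesis that $p$ is reducible of degree two means $p$ has a root in $k$, so case $(b)(i)$ of that theorem (which requires $p$ to be rootless, forcing $n_i=1$) does not occur here. Running through the remaining cases, each $R_i$ is isomorphic to one of: the matrix algebra $\mathcal{M}_2(k)$ --- and this happens exactly when $Q_i$ is a strict $2$-cycle whose colouring $a,b$ does not consist of roots of $q$ (case $(a)(ii)$) --- or else a truncated path algebra $k(Q_i')_{<2}$, where $Q_i'$ is the round-trip quiver (case $(a)(i)$), or $Q_i^{\op}$ (cases $(b)(ii)$ and $(c)$), or $(\widehat{Q_i})^{\op}$ (case $(b)(iii)$). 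Let $t$ be the number of components of the first kind.

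Finally I would assemble the pieces. For any finite collection of quivers $\{Q_j'\}$ there is a natural isomorphism $\prod_j k(Q_j')_{<2}\cong k\!\left(\bigsqcup_j Q_j'\right)_{\!<2}$: a vertex idempotent or an arrow of $Q_j'$ is sent into the $j$-th factor, the map is well defined by the universal property of the path algebra, and it respects the ideals $(Q'_{j,\geq 2})$ because arrows lying in different summands never compose. Taking $Q$ to be the disjoint union of those $Q_i'$ arising from the path-algebra components, and using that the matrix factors can be pulled out of a direct product, one obtains
\[
	R\;\cong\;\bigl(\mathcal{M}_2(k)\bigr)^{t}\times kQ_{<2},
\]
which is the assertion.

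The work is almost entirely bookkeeping; the only spot deserving a word of care is the identification $\prod_j k(Q_j')_{<2}\cong k(\bigsqcup_j Q_j')_{<2}$, together with the innocuous convention that if $t$ accounts for all components then $Q$ is the empty quiver and $kQ_{<2}=0$, so that $R\cong\mathcal{M}_2(k)^t$. I do not expect any genuine obstacle beyond exhausting the case list of Theorem~\ref{th:qduplicateskn}.
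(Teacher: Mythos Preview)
Your argument is correct and follows exactly the route the paper intends: the theorem is stated in the paper without proof, as an immediate corollary of Theorem~\ref{th:qduplicateskn} together with the component-wise decomposition remarked before Proposition~\ref{prop:2}. Your write-up simply makes explicit the case exhaustion and the elementary identification $\prod_j k(Q_j')_{<2}\cong k(\bigsqcup_j Q_j')_{<2}$, which is precisely the bookkeeping the paper leaves to the reader.
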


\begin{corollary}
There is a finite number of quantum duplicates of $k^n$ for any fixed $n>2$.
\end{corollary}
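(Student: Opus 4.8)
The plan is to reduce the statement to the explicit classification carried out above and then simply count, exploiting the fact that every algebra in sight has $k$-dimension $2n$. First I would observe that, by definition, a quantum duplicate of $k^n$ is an algebra $k^n\otimes_\tau B$ with $\dim_k B=2$, so up to isomorphism $B=k[x]/(p(x))$ for a monic quadratic $p$; and, as recalled in the Introduction, there are only finitely many isomorphism classes of two-dimensional $k$-algebras, namely the split algebra $k^2$, the dual numbers $k[\xi]$, and the quadratic field extensions of $k$. Hence it suffices to fix $B$ (equivalently, fix $p$ up to isomorphism of the quotient) and bound the number of isomorphism classes of the resulting twisted tensor products $k^n\otimes_\tau B$.

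I would then split into two cases according to the reducibility of $p$. If $p$ is irreducible, then $B=\bar k$ is a field and Corollary \ref{irreducible} gives $k^n\otimes_{(f,\delta)}\bar k\cong\mathcal{M}_2(k)^t\times(\bar k)^r$; comparing $k$-dimensions yields $4t+2r=2n$, i.e. $2t+r=n$, so there are only finitely many admissible pairs $(t,r)$ and therefore only finitely many algebras of this form. If $p$ is reducible, the theorem immediately preceding this corollary asserts that every quantum duplicate of $k^n$ is isomorphic to an algebra $\mathcal{M}_2(k)^t\times kQ_{<2}$ for some quiver $Q$; comparing dimensions again, $4t+\bigl(|Q_0|+|Q_1|\bigr)=2n$, so $t\le n/2$ while $Q$ has at most $2n$ vertices and at most $2n$ arrows. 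Since there are only finitely many isomorphism classes of quivers of such bounded size, there are only finitely many path algebras $kQ_{<2}$, and hence only finitely many algebras $\mathcal{M}_2(k)^t\times kQ_{<2}$.

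Finally I would assemble the count: running over the finitely many isomorphism classes of two-dimensional factors $B$ and, for each, over the finitely many possibilities produced in whichever of the two cases applies, one obtains a finite total number of quantum duplicates of $k^n$ up to isomorphism, as asserted.

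I do not expect a serious obstacle here, since all the work has already been done in the preceding structure theorems; the only subtlety worth flagging is that the dimension count must be used to bound the \emph{number of arrows} of $Q$, not merely the number of its vertices (otherwise there would be infinitely many candidate quivers on a fixed vertex set), and that the one-parameter families of twisting maps attached to strict $2$-cycles do not threaten finiteness because, by Proposition \ref{prop:2}, they collapse to just two isomorphism types of algebras — so the passage from ``infinitely many twisting maps'' to ``finitely many quantum duplicates'' is legitimate.
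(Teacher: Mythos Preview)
The paper offers no proof of this corollary; it is stated as an immediate consequence of the preceding structure theorem. Your approach --- invoke the structure theorems and bound the number of possible outputs by a dimension count --- is exactly what the authors have in mind, and your treatment of the reducible case is correct.

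There is, however, a genuine gap in your handling of the irreducible case. You assert that ``there are only finitely many isomorphism classes of two-dimensional $k$-algebras'', listing the quadratic field extensions of $k$ as though they formed a single class; but the paper itself notes in the Introduction (and again in Section~\ref{section3}) that a field such as $\mathbb{Q}$ admits infinitely many pairwise non-isomorphic quadratic extensions. Over such a field the corollary, read literally, is actually false: for each quadratic extension $\bar{k}$ the trivial twist gives the quantum duplicate $k^n\otimes\bar{k}\cong(\bar{k})^n$, and these commutative semisimple algebras are pairwise non-isomorphic as $\bar{k}$ varies. So the gap you have stumbled into is already present in the statement. The intended reading is presumably either that $k$ is algebraically closed (so that every quadratic $p$ is reducible and the preceding theorem covers all cases), or that one is counting twisted tensor products $k^n\otimes_\tau B$ for a \emph{fixed} two-dimensional factor $B$; under either interpretation your dimension-count argument goes through without change.
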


\begin{example}
    Let us describe all quantum duplicates of $k^3$,
    $R=k^3\otimes_{\tau} k[x]/(p(x))$. If $\tau$ is given by a pair $(f,\delta)$,
    the only possibilities for the quiver $Q_f$ are:

\vspace{0.5cm}

   \[
    \begin{array}{clclcl}
        (Q_1) &
            \xymatrix{
                *+[o][F-]{} \bucle &
                *+[o][F-]{} \bucle &
                *+[o][F-]{} \bucle
            } \qquad
        & (Q_2) &
            \xymatrix{
                *+[o][F-]{} \ar[r] &
                *+[o][F-]{} \bucle &
                *+[o][F-]{} \bucle
            } \qquad
        & (Q_3) &
            \xymatrix{
                *+[o][F-]{} \ar[r] &
                *+[o][F-]{} \ar[r] &
                *+[o][F-]{} \bucle
            } \vspace{0.5cm} \\
        (Q_4) &
            \xymatrix{
                *+[o][F-]{} \ar@/^4pt/@<0.5ex>[r] &
                *+[o][F-]{} \ar@/^4pt/@<0.5ex>[l] &
                *+[o][F-]{} \ar[l]
            }
        & (Q_5) &
            \xymatrix{
                *+[o][F-]{} \ar[r] &
                *+[o][F-]{} \bucle &
                *+[o][F-]{} \ar[l]
            }
        & (Q_6) &
            \xymatrix@R=10pt@C=10pt{
                & *+[o][F-]{} \ar[dr] \\
                *+[o][F-]{} \ar[ur] & & *+[o][F-]{} \ar[ll] \\
            } \vspace{0.2cm} \\
        (T)  &
            \xymatrix{
                *+[o][F-]{} \ar@/^4pt/@<0.5ex>[r] &
                *+[o][F-]{} \ar@/^4pt/@<0.5ex>[l] &
                *+[o][F-]{} \bucle
            }
       \end{array}
    \]

We should also consider the Cibil's transformation of the quivers with a 1-cycle component
\[
    \begin{array}{clclcl}
        (\widehat{Q_1}) &
            \xymatrix@R=10pt@C=20pt{\circ & \circ & \circ \\
               \circ & \circ & \circ
            } \qquad
        & (\widehat{Q_2}) &
            \xymatrix@R=10pt@C=20pt{
                \circ \ar[r] & \circ & \circ \\
               & \circ & \circ
            } \qquad
        & (\widehat{Q_3}) &
            \xymatrix@R=10pt@C=20pt{
               \circ \ar[r] & \circ \ar[r] & \circ \\
                &  & \circ
            } \vspace{0.5cm} \\
        (\widehat{Q_5}) &
            \xymatrix@R=10pt@C=20pt{
                \circ \ar[r]& \circ &\ar[l] \circ \\
               & \circ &
            }
        & (\widetilde{Q_5}) &  \xymatrix@R=10pt@C=20pt{
             & \circ & \ar[l] \circ \\
               \circ \ar[r] & \circ &
            }
        & (\widehat{T}) &
            \xymatrix@R=10pt@C=20pt{
                \circ \ar@/^4pt/@<0.5ex>[r] &
                \circ \ar@/^4pt/@<0.5ex>[l] &
                \circ \\
                &  & \circ
            }
       \end{array}
    \]
where $\widehat{Q_5}$ and $\widetilde{Q_5}$ are the two possibilities depending on the coloration of $Q_5$.

By Corollary \ref{irreducible}, if $p$ is irreducible, $Q_f$ must be $Q_1$ or $T$. Then
$R\cong (k[x]/(p(x)))^3$ or $R\cong \mathcal{M}_2(k) \times k[x]/(p(x))$. Otherwise, if $Q_f\neq T$, $R$ is isomorphic to one of the (truncated) path
    algebras of the opposite quivers of $Q_1$, $Q_2$, $Q_3$, $Q_4$, $Q_5$ ,$Q_6$, $\widehat{Q_1}$, $\widehat{Q_2}$, $\widehat{Q_3}$, $\widehat{Q_5}$ or $\widetilde{Q_5}$. In case of $Q_f=T$, $R$ depends on the coloration of the round-trip component of $T$. Then we obtain that $R$ is the truncated path algebra of $T$ or $\widehat{T}$ if it is colored by roots of $q$; or $R\cong \mathcal{M}_2(k) \times k[x]/(x^2)$, or $R\cong \mathcal{M}_2(k) \times k^2$ if not.
    \end{example}

\section{Factorization structures of dimension 4}\label{section3}

The simplest nontrivial algebras that can be factorized as twisted tensor product ought to have factors of dimension at least 2, and thus the dimension of the product has to be greater or equal than 4. In the present Section, our purpose is to classify, up to isomorphism, all the algebras of dimension 4 that can be factorized. This problem turns out to have a strong dependence on the base field $k$. Basically we face two different situations:

\begin{enumerate}
	\item The field $k$ does not admit a field extension of degree 2 (for instance, if $k$ is algebraically closed). In this case, there are only two different algebras of dimension 2 that can appear as factors, the semisimple algebra $k^2$, and the algebra of dual numbers $k[\xi]$. In the realization of these algebras as quotients $k[x]/(p(x))$, the algebra $k^2$ corresponds to the cases in which $p$ has two distinct roots in $k$, and $k[\xi]$ to those for which $p$ has a double root.
	\item If $k$ admits quadratic extensions, or equivalently, there are polynomials of degree 2 with no roots in $k$, we have to take into account all these extensions as possible factors. The number of non-isomorphic quadratic extensions may range among one (for instance, if $k=\mathbb{R}$ the field of real numbers) to an infinite family of them (as happens for $k=\mathbb{Q}$ the field of rational numbers).
\end{enumerate}

Some of the possible combinations of the previous kinds of algebras to form 4 dimensional factorization structures have already been classified, concretely all the algebras of the form $k^2\otimes_\tau k^2$ (classified in \cite{cibils} and \cite{Lopez08a}), and all the factors of the form $k^2 \otimes_\tau A$, with $A=k[x]/(p(x))$ 2-dimensional, are described as a particular case of Theorem \ref{th:qduplicateskn}. For the sake of completeness, we relist the resulting algebras (without any proof).

\subsection{Twisted tensor products of the form $k^2\otimes_\tau k^2$}

Every twisted tensor product of the form $k^2\otimes_\tau k^2$ is isomorphic to one of the following algebras:

\begin{enumerate}
        \item The commutative algebra $k^4$.
        \item The algebra of matrices $\mathcal{M}_2(k)$.
        \item The quotient $kQ_{< 2}$ of the path algebra $kQ$ of
the round-trip quiver
    \[  Q=
        \xymatrix
        {   \circ \ar@/^4pt/@<0.5ex>[r]& \circ \ar@/^4pt/@<0.5 ex>[l]
        }
    \]
\item The path algebra $k\widetilde{Q}$ of the quiver
   \[  \widetilde{Q}=
        \begin{array}{l}
        \xymatrix@R=10pt@C=10pt
        {   & \circ &  \\
            \circ \ar[rr] & & \circ
        }
        \end{array}
    \]
\end{enumerate}

\subsection{Twisted tensor products of the form $k^2\otimes_\tau k[\xi]$}

Every twisted tensor product of the form $k^2\otimes_\tau k[\xi]$ is isomorphic to one of the following algebras:

\begin{enumerate}
	\item The commutative algebra $k[\xi]\times k[\xi]$.
	\item The algebra of matrices $\mathcal{M}_2(k)$.
	\item The quotient $kQ_{< 2}$ of the path algebra $kQ$ of
the round-trip quiver
    		\[  Q=
        		\xymatrix
        		{   	\circ \ar@/^4pt/@<0.5ex>[r]& \circ \ar@/^4pt/@<0.5 ex>[l]
        		}
    		\]
	\item The quotient $kQ'_{< 2}$ of the path algebra $kQ'$ of the quiver
		\vspace{1em}
		\[  Q'=
        		\xymatrix
        		{   \circ \ar[r]& \circ \bucle
        		}
		\]
\end{enumerate}

\subsection{Twisted tensor products of the form $k[\xi]\otimes_\tau k[\xi]$}

Since the algebra $k[\xi]$ of dual numbers is not separable, the description of twisting maps by means of endomorphisms and derivations is not as simple as it is when we use $k^2$. However, for this particular situation, the equations derived from the twisting conditions are simple enough to solve by a direct approach. If we consider the two copies of $k[\xi]$ respectively generated by $x$ and $y$ with $x^2=y^2=0$, and identify $x$ and $y$ with their images in the twisted tensor product $k[\xi]\otimes_\tau k[\xi]$, the twisting map is given by $yx = a + bx + cy + dxy$, and imposing the twisting conditions we obtain a system of equations that can be reduced to
	\begin{equation}
		\left\{
			\begin{array}{rcl}
				a (1+d) & = & 0 \\
				b & = & 0 \\
				c & = & 0
			\end{array}
		\right.
	\end{equation}
and thus the variety $\mathcal{T}(k[\xi],k[\xi])$ of twisting maps has two lines as irreducible components, giving rise to two one-parameter families of algebras, corresponding respectively to the solutions $a= 0$ and $d=-1$ of the previous system:
	\begin{gather}
		A_q:= k\langle x,y|\ x^2 = y^2 = 0,\; yx=qxy \rangle \\
		X_t:=k\langle x,y|\ x^2 = y^2 = 0,\; yx + xy = t \rangle	
	\end{gather}
both families intersect at the algebra $A_{-1} = X_0$ (which is the exterior algebra $\bigwedge k^2$ of $k^2$).

\begin{lemma}
	For any $t\neq 0$, the algebra $X_t$ is isomorphic to the matrix ring $\M_2(k)$.
\end{lemma}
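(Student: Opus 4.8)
The plan is to exhibit an explicit isomorphism onto $\M_2(k)$. As a twisted tensor product $k[\xi]\ot_\tau k[\xi]$, the algebra $X_t$ is $4$-dimensional, with $k$-basis $\{1,x,y,xy\}$; equivalently, the relations $x^2=y^2=0$ together with $yx=t\cdot 1-xy$ put every word in $x,y$ into this normal form, so in any case $\dim_k X_t\le 4$. To \emph{guess} the isomorphism, I would first note that $e:=t^{-1}xy$ is idempotent in $X_t$: from $x(yx)y=x(t\cdot 1-xy)y=t\,xy$ (using $x^2=y^2=0$) one gets $(xy)^2=t\,xy$, whence $e^2=e$; moreover $t^{-1}yx=1-e$ is the complementary idempotent, and one checks $ex=x$, $xe=0$, $ye=y$, $ey=0$, while $xy=te$ and $yx=t(1-e)$. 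Thus $e,\ 1-e,\ x,\ y$ obey the multiplication table of the matrix units $E_{11},E_{22},E_{12},E_{21}$ of $\M_2(k)$, up to a factor of $t$ on the product $xy$.

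Guided by this, I would define $\Phi\colon X_t\to\M_2(k)$ on the generators by $\Phi(x)=E_{12}$ and $\Phi(y)=t\,E_{21}$, and verify that it respects the defining relations of $X_t$: $\Phi(x)^2=E_{12}^2=0$, $\Phi(y)^2=t^2E_{21}^2=0$, and
\[
\Phi(y)\Phi(x)+\Phi(x)\Phi(y)=t\bigl(E_{21}E_{12}+E_{12}E_{21}\bigr)=t\bigl(E_{22}+E_{11}\bigr)=t\,\Id=\Phi(t\cdot 1).
\]
Hence $\Phi$ extends to a unital algebra homomorphism. The normalization $\Phi(y)=t\,E_{21}$ (rather than $E_{21}$) is precisely what makes the mixed relation $yx+xy=t$ land on $t\,\Id$; this is the only point in the argument that requires a moment's thought, and the idempotent computation above is what makes the correct choice transparent.

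It then remains to check bijectivity. The map $\Phi$ is surjective because $E_{12}=\Phi(x)$, $E_{21}=t^{-1}\Phi(y)$, $E_{11}=t^{-1}\Phi(x)\Phi(y)=\Phi(t^{-1}xy)$ and $E_{22}=t^{-1}\Phi(y)\Phi(x)$ all lie in its image. Since $\dim_k X_t\le 4=\dim_k\M_2(k)$, a surjective $k$-linear map between them must be a linear isomorphism, so $\Phi$ is an isomorphism of $k$-algebras. (Alternatively one can see injectivity directly: $\Phi$ sends the spanning set $1,x,y,xy$ of $X_t$ to $\Id,\ E_{12},\ tE_{21},\ tE_{11}$, which are linearly independent in $\M_2(k)$ as $t\neq 0$.) Since this forces $\dim_k X_t=4$, there is in fact no separate appeal to the twisted-tensor-product dimension needed, and the lemma follows.
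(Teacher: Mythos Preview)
Your proof is correct and uses exactly the same isomorphism as the paper (which simply records $x\mapsto E_{12}$, $y\mapsto tE_{21}$ without further comment). Your additional motivation via the idempotent $e=t^{-1}xy$ and the explicit verification of well-definedness and bijectivity are sound and merely flesh out what the paper leaves implicit.
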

\begin{proof}
	Just take the isomorphism given by
	\[
		x\mapsto \begin{pmatrix}
					0 & 1 \\
					0 & 0
				\end{pmatrix}, \quad
		y\mapsto \begin{pmatrix}
					0 & 0 \\
					t & 0
				\end{pmatrix}.
	\]
	
\end{proof}

The other irreducible component of the twisting variety actually gives rise to a complete family of non-isomorphic algebras. More concretely, we have the following result (whose proof is a straightforward computation):

\begin{lemma}
	The algebras $A_q$ and $A_h$ are isomorphic if, and only if, $q=h$ or $q=h^{-1}$.
\end{lemma}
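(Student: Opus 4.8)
The plan is to prove both implications by direct manipulation of the defining relations of $A_q = k\langle x,y\mid x^2=y^2=0,\ yx=qxy\rangle$.

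First I would dispose of the easy direction. If $q=h$ the algebras are literally equal. If $q=h^{-1}$, I would exhibit an explicit isomorphism $A_q\to A_h$; the natural candidate is the ``swap'' $x\mapsto y$, $y\mapsto x$ (possibly up to rescaling). Under this map the relations $x^2=y^2=0$ are preserved, and the relation $yx=qxy$ becomes $xy=qyx$, i.e. $yx=q^{-1}xy=h\,xy$, which is exactly the defining relation of $A_h$. One checks it is bijective because $\{1,x,y,xy\}$ is a basis of each algebra and the map permutes a spanning set, so this direction is a one-line verification.

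For the converse, suppose $\psi:A_q\to A_h$ is an algebra isomorphism. The key structural fact is that $A_q$ (for $q\neq 0$; note the exterior-algebra boundary case $q=h=-1$ is self-explanatory) has a well-understood structure: its Jacobson radical is $J=\langle x,y\rangle$, with $J^2=\langle xy\rangle$ one-dimensional and $J^3=0$, and $A_q/J\cong k$. Thus $\psi$ must send $J$ to the corresponding radical of $A_h$, so $\psi(x)=\alpha x+\beta y+\gamma xy$ and $\psi(y)=\alpha' x+\beta' y+\gamma' xy$ for scalars with the linear parts forming an invertible $2\times 2$ matrix (the induced map on $J/J^2$). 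Now I would impose the three relations: $\psi(x)^2=0$, $\psi(y)^2=0$, and $\psi(y)\psi(x)=h\,\psi(x)\psi(y)$. Expanding $\psi(x)^2=0$ using $x^2=y^2=0$ and $yx=qxy$ in $A_h$... wait — one computes in $A_h$, where $yx=h\,xy$, giving $\psi(x)^2=(\alpha\beta+\alpha\beta h)xy=\alpha\beta(1+h)xy$. So either $h=-1$ (the exterior-algebra case, handled separately) or $\alpha\beta=0$; similarly $\alpha'\beta'=0$. Combined with invertibility of $\begin{pmatrix}\alpha&\beta\\\alpha'&\beta'\end{pmatrix}$, this forces the linear part to be ``diagonal or anti-diagonal'': either $\beta=\alpha'=0$ (so $\psi$ scales $x$ and $y$) or $\alpha=\beta'=0$ (so $\psi$ swaps them up to scaling). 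Feeding each case into the third relation $\psi(y)\psi(x)=h\,\psi(x)\psi(y)$ and comparing the coefficient of $xy$ then yields $q=h$ in the diagonal case and $q=h^{-1}$ in the anti-diagonal case.

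The main obstacle is bookkeeping rather than conceptual: one must carefully track the $xy$-coefficients of all the quadratic expressions $\psi(x)^2,\psi(y)^2,\psi(x)\psi(y),\psi(y)\psi(x)$ computed inside $A_h$ (so using $yx=h\,xy$ there), and make sure the argument correctly handles the degenerate characteristic-2 possibilities and the special value $h=-1$, where $1+h=0$ removes the constraint $\alpha\beta=0$ — but there $A_{-1}=\bigwedge k^2$ is commutative, $q=-1$ forces $h=-1$ anyway by the same radical/symmetric-structure argument, so the statement $q=h$ holds. Since the paper flags this as ``a straightforward computation,'' I would present only the reduction of the linear part to diagonal/anti-diagonal form and the final coefficient comparison, leaving the elementary expansions to the reader.
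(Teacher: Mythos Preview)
Your overall strategy is correct and is exactly what the paper has in mind (the paper omits the proof as ``a straightforward computation''). Two points need fixing, though.

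First, a slip: the relation that $\psi$ must preserve is the one coming from $A_q$, namely $yx=qxy$, so the third equation is $\psi(y)\psi(x)=q\,\psi(x)\psi(y)$, not $=h\,\psi(x)\psi(y)$. Your diagonal/anti-diagonal analysis then yields $h=q$ and $qh=1$ respectively, as desired.

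Second, your treatment of the boundary case $h=-1$ is not right: the exterior algebra $\bigwedge k^2$ is \emph{not} commutative (unless $\Char k=2$, where $-1=1$ and $A_{-1}=A_1$ really is commutative). The correct argument is that in $A_{-1}$ any two elements $u,v$ of the radical satisfy $uv=-vu$ (check on the basis $x,y,xy$ and extend bilinearly). Hence $\psi(y)\psi(x)=-\psi(x)\psi(y)$ automatically; combined with $\psi(y)\psi(x)=q\,\psi(x)\psi(y)$ and the fact that $\psi(x)\psi(y)=(\alpha\beta'-\alpha'\beta)xy\neq 0$ (the determinant is nonzero), this forces $q=-1=h$. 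With these two corrections your write-up goes through.
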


\begin{remark}
	Remarkable algebras contained in the family $A_q$ are $A_{-1}=\bigwedge k^2$, the exterior algebra of $k^2$, and $A_1 \cong k[x,y]/(x^2,y^2)$, the only commutative algebra in the family, corresponding to the classical tensor product $k[\xi]\otimes k[\xi]$. It is also worth noticing that, unlike in the previous situations, the twisting variety $\mathcal{T}(k[\xi],k[\xi])$ is connected.
\end{remark}

\subsection{Twisted tensor products of the form $k^2\otimes_\tau l$}
Twisted tensor products of the form $k^2\otimes_\tau l$, with $l=k[x]/(p(x))$ a quadratic field extension of $k$, are again classified using Theorem \ref{th:qduplicateskn}, or more precisely its Corollary \ref{irreducible}. According to it, all the resulting algebras are of the following form:

	\begin{enumerate}
		\item The commutative algebra $k^2\otimes l\cong l\times l$.
		\item The matrix algebra $\M_2(k)$.
	\end{enumerate}

\subsection{Twisted tensor products of the form $k[\xi]\otimes_\tau l$}\label{ssec:duals_l}

For $l$ a quadratic field extension of $k$, by Lemma \ref{lemma:map_derivation}, twisted tensor products of the form $k[\xi]\otimes_\tau l$ are given by couples $(f,\delta)$, being $f$ an algebra endomorphism of $l$, and $\delta$ and $f$--derivation such that
	\begin{eqnarray}
		\delta^2 & = & 0 \label{ke_l:eq1}\\
		f\delta + \delta f & = & 0 \label{ke_l:eq2}
	\end{eqnarray}
Now, if $l$ is a Galois extension of $k$ (which is always the case if $\Char k \neq 2$), its $k$-linear endomorphisms are in one-to-one correspondence with elements of the Galois group $\Gal(l/k)\cong \Z_2$, which in this case correspond to the identity map and the nontrivial morphism $\sigma$ that exchanges the roots of $p(x)$. Moreover, since every Galois extension is separable, all derivations in $l$ are inner. If $f$ is the identity map on $l$, there are no nontrivial inner derivations, so we only get one couple $(\Id_l,0)$, that trivially satisfies \eqref{ke_l:eq1} and \eqref{ke_l:eq2}. The twisting map associated to the couple $(\Id_l,0)$ is the classical tensor product $k[\xi] \otimes l\cong l[\xi]$.

Let us describe the $\sigma$--derivations for the nontrivial morphism $\sigma$. Assume that $l$ is generated over $k$ by $\eta$, a root of the polynomial $p(x)=x^2-\alpha x + \beta$. As any derivation is inner, there exists $\theta\in l$ such that $\delta(x) = (\sigma(x) -x )\theta$, so in particular we get
	\[
		\delta(\eta)= (\sigma(\eta)-\eta)\theta= (\alpha - 2\eta)\theta.
	\]
For the study of this example it is useful to take into account the characteristic of the field. If $\Char k\neq 2$, as stated in Remark \ref{rk:charnot2}, the change of variables $x\mapsto (\alpha/2)x + 1$ takes the polynomial $p$ into $p'(x)=x^2 + \beta$, and $p'$ generates the same field extension $l$ as $p$, so we may just assume that $\alpha=0$ and obtain $\delta(\eta) = -2\eta\theta$. Writing $\theta=a+b\eta$, and taking into account that $\eta^2=-\beta$, we get
	\[
		\delta(\eta) = -2a\eta -2b\eta^2 = 2b\beta - 2a\eta.
	\]
Plugin this expression into equations \eqref{ke_l:eq1} and \eqref{ke_l:eq2} leads to conditions
	\begin{eqnarray}
		4a\eta & = & 0 \\
		4a^2\eta - 4 ab\beta & = & 0
	\end{eqnarray}
that are satisfied if, and only if, $a=0$. Thus, the only $\sigma$-derivations providing valid twisting maps are of the form $\delta_q(\eta) = q$ for some $q\in k$, and we get a 1-parameter family of twisting maps given by the couples $(\sigma,\delta_q)$, leading to the family of algebras
	\[
		B_q := k\langle x,y|\ x^2=0, y^2 = \gamma, xy+yx = q\rangle
	\]
where in order to simplify notation we are writing $\gamma$ in the place of $-\beta$.

\begin{lemma}\label{lm:matrices}
	For all $q\neq 0$, the algebra $B_q$ is isomorphic to the matrix algebra $\M_2(k)$.
\end{lemma}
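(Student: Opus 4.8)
The plan is to prove the lemma by exhibiting an explicit algebra homomorphism $\Phi\colon B_q\to\M_2(k)$ and then checking that it is bijective via a dimension count. First I would bound $\dim_k B_q$: using the relations $x^2=0$, $y^2=\gamma$ and $yx=q-xy$, every word in $x,y$ can be rewritten (by pushing all occurrences of $x$ to the left of every $y$, which terminates, and then reducing the exponents of $x$ and of $y$) as a $k$-linear combination of $1,x,y,xy$, so $\dim_k B_q\le 4$. Since $\dim_k\M_2(k)=4$, it will suffice to produce a \emph{surjective} algebra map $\Phi$; equivalently, one may invoke that $B_q=k[\xi]\otimes_\tau l$ has underlying space $k[\xi]\otimes l$ of dimension $4$ and then only surjectivity (or only injectivity) is needed.

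To build $\Phi$ I need matrices $X,Y\in\M_2(k)$ with $X^2=0$, $Y^2=\gamma I$ and $XY+YX=qI$. Starting from $X=\left(\begin{smallmatrix}0&1\\0&0\end{smallmatrix}\right)$, the condition $XY+YX=qI$ forces $Y$ to have the shape $\left(\begin{smallmatrix}a&b\\q&-a\end{smallmatrix}\right)$, for which $Y^2=(a^2+bq)I$; since $q\neq 0$ we can then solve $a^2+bq=\gamma$, most simply with $a=0$, $b=\gamma/q$. This is precisely where the hypothesis $q\neq 0$ is used (for $q=0$ one has $B_0\cong l[\xi]$, which is not semisimple). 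Thus I set
\[
  \Phi(x)=\begin{pmatrix}0&1\\0&0\end{pmatrix},\qquad
  \Phi(y)=\begin{pmatrix}0&\gamma/q\\q&0\end{pmatrix},
\]
and a direct check shows $\Phi(x)^2=0$, $\Phi(y)^2=\gamma I$, $\Phi(x)\Phi(y)+\Phi(y)\Phi(x)=qI$, so $\Phi$ respects the defining relations of $B_q$ and is a well-defined $k$-algebra homomorphism.

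Finally, surjectivity: $\Phi(1)=I$ and $\Phi(xy)=\left(\begin{smallmatrix}q&0\\0&0\end{smallmatrix}\right)$ together yield the two diagonal matrix units; $\Phi(x)$ gives the strictly upper-triangular unit, and combining $\Phi(y)$ with the latter gives the strictly lower-triangular one. Hence $\Phi$ is onto, and comparing dimensions it is an isomorphism $B_q\cong\M_2(k)$. I do not expect a genuine obstacle here: the only point requiring a little care is the spanning/dimension bound on $B_q$ (needed to turn surjectivity into bijectivity), and guessing the right $Y$ — everything else is routine matrix computation.
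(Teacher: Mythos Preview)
Your proof is correct and follows the same approach as the paper: write down explicit $2\times 2$ matrices satisfying the three relations. The paper's choice is $x\mapsto\left(\begin{smallmatrix}0&0\\ q/\gamma&0\end{smallmatrix}\right)$, $y\mapsto\left(\begin{smallmatrix}0&\gamma\\ 1&0\end{smallmatrix}\right)$ (dividing by $\gamma$, which is nonzero since $l$ is a field extension), whereas you divide by $q$ instead; your added dimension and surjectivity checks are details the paper leaves implicit. (One small aside: your parenthetical remark that $B_0\cong l[\xi]$ is only true in characteristic $2$; in characteristic $\neq 2$ the algebra $B_0$ is noncommutative --- but this does not affect your argument.)
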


\begin{proof}
	Just take the isomorphism given by
	\[
		x \mapsto 	\begin{pmatrix}
						0 & 0 \\
						q/\gamma & 0
					\end{pmatrix},\quad
		y \mapsto 	\begin{pmatrix}
						0 & \gamma \\
						1 & 0
					\end{pmatrix}.			
	\]
\end{proof}

\begin{lemma}\label{lemma:B0}
	The algebra $B_0$ is isomorphic to the invariant ring $\left(lQ_{< 2}\right)^G$, where $Q=
        		\xymatrix
        		{   	\circ \ar@/^4pt/@<0.5ex>[r]& \circ \ar@/^4pt/@<0.5 ex>[l]
        		}$ is the round-trip quiver, and $G$ denotes the group generated by the non-trivial automorphism that exchanges the vertices and the arrows of $Q$ and conjugate the scalar elements of $l$ with respect to the nontrivial element of the Galois group $\Gal(l/k)$.
\end{lemma}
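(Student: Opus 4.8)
The plan is to write down an explicit $k$-algebra isomorphism $\Phi\colon B_0\to\left(lQ_{<2}\right)^G$ and then conclude by comparing dimensions. First I would make both algebras concrete. Recall from Subsection \ref{ssec:duals_l} that $B_0$ is the twisted tensor product $k[\xi]\otimes_{(\sigma,0)}l$; in particular $\dim_k B_0=4$ (it is a twisted tensor product of two $2$-dimensional algebras), with basis $\{1,x,y,xy\}$ subject to $x^2=0$, $y^2=\gamma$ and $xy+yx=0$, where $y$ is identified with the generator $\eta$ of $l$. On the other side, write $u,v$ for the vertices and $R\colon u\to v$, $S\colon v\to u$ for the arrows of the round-trip quiver $Q$, so that $lQ_{<2}$ has $l$-basis $\{e_u,e_v,R,S\}$; the only nonzero products among these generators are $e_u^2=e_u$, $e_v^2=e_v$, $e_vR=Re_u=R$, $e_uS=Se_v=S$ (hence $R^2=S^2=RS=SR=0$), and the scalars from $l$ are central. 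The generator $g$ of $G$ is the order-two semilinear ring automorphism determined by $g(e_u)=e_v$, $g(R)=S$ and $g(\lambda w)=\bar\lambda\,g(w)$ for $\lambda\in l$; since $g$ is a ring automorphism, $\left(lQ_{<2}\right)^G$ is a $k$-subalgebra, and an immediate computation identifies it with the $4$-dimensional space of all $\lambda e_u+\bar\lambda e_v+\mu R+\bar\mu S$ with $\lambda,\mu\in l$, a $k$-basis being $\{\,1,\ \eta(e_u-e_v),\ R+S,\ \eta(R-S)\,\}$.

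Next I would define $\Phi$ on generators by $\Phi(x):=R+S$ and $\Phi(y):=\eta(e_u-e_v)$; both elements are $g$-invariant, so they indeed lie in $\left(lQ_{<2}\right)^G$. To see that $\Phi$ extends to a homomorphism of $k$-algebras it suffices to verify that these putative images satisfy the three defining relations of $B_0$. Using the multiplication table above together with the centrality of $l$ one finds
\[
	(R+S)^2=0,\qquad \bigl(\eta(e_u-e_v)\bigr)^2=\eta^2(e_u+e_v)=\gamma,
\]
and
\[
	(R+S)\,\eta(e_u-e_v)=\eta(R-S)=-\,\eta(e_u-e_v)\,(R+S),
\]
so that $\Phi(x)\Phi(y)+\Phi(y)\Phi(x)=0$; hence $\Phi$ is well defined.

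Finally I would check surjectivity: the image of $\Phi$ contains $1$, $\Phi(y)=\eta(e_u-e_v)$, $\Phi(x)=R+S$ and $\Phi(xy)=\eta(R-S)$, that is, the whole $k$-basis of $\left(lQ_{<2}\right)^G$ exhibited above, so $\Phi$ is onto. Since $\dim_k B_0=4=\dim_k\left(lQ_{<2}\right)^G$, the surjection $\Phi$ must be an isomorphism. The whole argument is a routine calculation; the only points that require a little care are keeping track of the quiver multiplication together with the semilinearity of $g$, and choosing the normalisation $\Phi(y)=\eta(e_u-e_v)$ rather than $e_u-e_v$, which is exactly what forces the relation $y^2=\gamma$ to hold.
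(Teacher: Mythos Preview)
Your proof is correct and uses the same explicit isomorphism as the paper: both send $x\mapsto R+S$ and $y\mapsto \eta(e_u-e_v)$ (the paper writes $\sqrt{\gamma}$ for $\eta$, which is the same element since $\eta^2=\gamma$). The only difference is in packaging: the paper first extends scalars to obtain an $l$-algebra isomorphism $B_0\otimes l\cong lQ_{<2}$ and then recovers $B_0$ as the $\sigma$-invariants, whereas you work directly inside $\left(lQ_{<2}\right)^G$, compute its $k$-basis, and verify surjectivity plus a dimension count; your version is more self-contained, while the paper's Galois-descent phrasing makes the origin of the automorphism $G$ more transparent.
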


\begin{proof}
Let us denote by $u, v$ the vertices of $Q$, and by $R, S$ its arrows.
First, realize that the $l$--algebra $B_0\otimes l=l\langle x,y|\ x^2=0, y^2 = \gamma, xy+yx = q\rangle$ is isomorphic to the algebra $lQ_{< 2}$ via the automorphism defined through
	\[
		x\longmapsto R + S,\quad y\longmapsto \sqrt{\gamma}u - \sqrt{\gamma}v.
	\]
	The automorphism $\sigma$ of the Galois group lifts in a trivial way to $B_0\otimes l$, and obviously $B_0\cong (B_0\otimes l)^\sigma$; now, it is straightforward to check that, under the above mapping, the image of $B_0$ is invariant under the action of $G$, resulting in the desired isomorphism $B_0\cong \left(lQ_{< 2}\right)^G$.
	
\end{proof}

Let us study now the case for $k$ a field with $\Char k= 2$. In this case, for the twisted derivation we get
	\[
		\delta(\eta) = \alpha a + \alpha b\eta,
	\]
which leads to equations
	\begin{eqnarray}
		\alpha^2 b & = & 0 \\
		\alpha^2 ab + \alpha^2 b^2\eta & = & 0,
	\end{eqnarray}
that are satisfied if, and only if, $b=0$ or $\alpha = 0$. Assume that $\alpha \neq 0$, that is, that the polynomial $p$ has two distinct roots on $l$; this is always the case if $k$ is a perfect field, otherwise the situation is very different, and will be treated separately. Then from the above equations we obtain $b=0$, yielding $\delta(\eta)=q \in k$, which gives us exactly the same family of algebras
	\[
		B_q := k\langle x,y|\ x^2=0, y^2 = \gamma, xy+yx = q\rangle
	\]
previously mentioned. Same proof as in Lemma \ref{lm:matrices} tells us that $B_q\cong \M_2(k)$ whenever $q\neq 0$, but in this case for $q=0$ the algebra $B_0$ is commutative and thus $B_0\cong k[\xi]\otimes l\cong l[\xi]$.

The remaining case, namely when $\Char k=2$ and $p(x)=x^2 + \beta$, is a special one. Realize that in this case the irreducible polynomial of $\eta$ over $k$ has $\eta$ as its unique (double) root. Under these premises, $l$ is not a Galois extension of $k$, since it fails to be separable, and the former reasoning cannot be applied. In particular, as there is only one root in $p$, there are no nontrivial automorphisms of $l$, so the only possible choice for $f$ is the identity; however, as the extension is not separable anymore, now we must take into account that there can be derivations that are not inner. In the present situation, with $f=\Id$, equations \eqref{ke_l:eq1} and \eqref{ke_l:eq2} become simply
	\begin{eqnarray}
		\delta^2 & = & 0, \\
		2\delta & = & 0,
	\end{eqnarray}
of which the second one is trivially satisfied, Thus, we only need to classify derivations of square 0.

For any derivation $\delta$ we have
	\begin{eqnarray}
		\delta(1) & = & \delta(1^2)=2\delta(1)= 0,\\
		\delta(\eta) & = & a \eta + b
	\end{eqnarray}
for some $a, b\in k$. It is straightforward to check that any choice of $a$ and $b$ leads to a valid derivation in $k(\eta)$, and we have that $\delta^2=0$ if, and only if,
	\[
		0=\delta^2(\eta)=\delta(a\eta + b)=a^2 \eta + ab,
	\]
and this equation is satisfied if, and only if, $a=0$. Henceforth, we have a one-parameter family of square 0 derivations that give us twisting maps, and these derivations are given by $\delta(\eta)=b$. Once again, the resulting twisted tensor products are the family of algebras
	\[
		B_q := k\langle x,y|\ x^2=0, y^2 = \gamma, xy+yx = q\rangle,
	\]
so we obtain the same twisted tensor products regardless of the characteristic of the field.

\subsection{Twisted tensor products of the form $l\otimes_\tau l'$}

In order to classify twisted tensor products of two field extensions, we shall initially assume that the characteristic of the base field is different from 2. As it happened in the last case, for fields of characteristic 2 weird phenomena may show up, so we will study them separately.

So, take $k$ such that $\Char k\neq 2$, and assume (without loss of generality) that the field extensions $l$ and $l'$ are given as splitting fields of the polynomials $x^2 - \alpha$ and $x^2 - \beta$, respectively, and let us denote by $\eta$, $\zeta$ their respective generators over $k$, so that $l=k\langle \eta|\ \eta^2=\alpha \rangle$, and $l'=k\langle \zeta|\ \zeta^2=\beta \rangle$. Using again Lemma \ref{lemma:map_derivation}, twisting maps $\tau: l'\ot l \to l\ot l'$ are in one-to-one correspondence with couples $(f,\delta)$ where $f$ is a $k$--linear endomorphism of $l$, satisfying the corresponding compatibility conditions. Since $l$ is a Galois extension of $k$ of degree $2$, there are only two $k$--linear endomorphisms of $l$, namely the identity and the map $\sigma$ given by $\sigma(\eta)=-\eta$. Since $l$ is separable, all derivations are inner, and thus there are no nontrivial derivations associated to the identity map. For the map $\sigma$, we need to find all the $\sigma$--derivations satisfying
	\begin{eqnarray}
		\delta^2 & = & 0 \\
		\delta\sigma + \sigma \delta & = & 0,
	\end{eqnarray}
where we are using the fact that $\sigma^2 = \Id_l$. Realize that the above conditions are trivially satisfied for the identity morphism and the trivial derivation (yielding the usual tensor product $l\ot l'$). If the derivation $\delta$, which is inner, is induced by an element $\theta=a+b\eta$, this leads us again to the same equations that showed up in the former paragraph:
	\begin{eqnarray}
		4a\eta & = & 0 \\
		4a^2\eta + 4 ab\alpha & = & 0
	\end{eqnarray}
with solutions given by $\delta(\eta)=q\in k$, and once again we obtain the 1--parameter family of algebras
leading to the family of algebras
	\[
		C_q := k\langle x,y|\ x^2=\alpha, y^2 = \beta, xy+yx = q\rangle.
	\]
However, the classification of these algebras is not as easy as in the former situations, and depends strongly on the ground field $k$. As a first approximation for the classification of the algebras $C_q$, we have the following result:
	\begin{lemma}
		The algebra $C_q$ is isomorphic to the generalized quaternion algebra $^{\alpha}k^{t}$, with $t=\frac{q^2-4\alpha\beta}{4\alpha^2}$. In particular, the algebras $C_q$ form a family of linked quaternion algebras.
	\end{lemma}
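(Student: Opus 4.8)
\section*{Proof proposal}

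The plan is to exhibit an explicit isomorphism between $C_q$ and the generalized quaternion algebra ${}^{\alpha}k^{t}$, which we take (following the usual convention) to be the $4$-dimensional $k$-algebra generated by elements $i,j$ subject to $i^2=\alpha$, $j^2=t$ and $ij+ji=0$. Since $\Char k\neq 2$ and the presentation of $C_q$ already contains the generator $x$ with $x^2=\alpha$, the idea is to keep $x$ as the first quaternion generator and to modify $y$ so as to obtain a second generator that anticommutes with $x$ and squares to exactly $t$.

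Concretely, I would first set $y':=y-\tfrac{q}{2\alpha}x$ and verify, using only the defining relations $x^2=\alpha$, $y^2=\beta$, $xy+yx=q$, that $xy'+y'x=0$ and $(y')^2=\beta-\tfrac{q^2}{4\alpha}$. Then I would replace $y'$ by $z:=\tfrac1\alpha\,x y'$, which makes sense because $\alpha\neq 0$ (as $l=k(\eta)$ with $\eta^2=\alpha$), and check that $xz+zx=0$ and
\[
	z^2 \;=\; \tfrac1{\alpha^2}\,x y' x y' \;=\; -\tfrac1{\alpha^2}\,x^2 (y')^2 \;=\; -\tfrac1{\alpha}\Bigl(\beta-\tfrac{q^2}{4\alpha}\Bigr) \;=\; \frac{q^2-4\alpha\beta}{4\alpha^2}\;=\;t .
\]
Since $y=xz+\tfrac{q}{2\alpha}x$, the elements $x$ and $z$ generate $C_q$, so the assignment $i\mapsto x$, $j\mapsto z$ extends to a surjective algebra homomorphism ${}^{\alpha}k^{t}\to C_q$ (equivalently one may write the reverse assignment $x\mapsto i$, $y\mapsto ij+\tfrac{q}{2\alpha}i$ and check that the three relations of $C_q$ are preserved, surjectivity then following because the image contains $i$, $ij$, hence $\alpha j$, hence $j$). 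As $C_q$ is a twisted tensor product of two $2$-dimensional algebras it has dimension $4$, and ${}^{\alpha}k^{t}$ is $4$-dimensional for every value of $t\in k$; hence the surjection is an isomorphism.

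For the final assertion, each $C_q$ has been realized as a quaternion algebra carrying $\alpha$ in one of its two slots, and two quaternion algebras sharing a common slot are linked by definition; therefore $\{C_q\}_{q\in k}$ is a family of pairwise linked quaternion algebras. I do not anticipate a serious obstacle in this argument: the only points requiring a little care are choosing the correcting coefficient $\tfrac{q}{2\alpha}$ so that $z^2$ comes out \emph{exactly} equal to $t$ rather than to $t$ times a square (which would only give an isomorphic, but not literally equal, slot), and invoking the equality of dimensions to promote the surjection to an isomorphism; both become routine once one uses that $\alpha\neq 0$.
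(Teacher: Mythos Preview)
Your proof is correct and essentially coincides with the paper's: the reverse assignment you write down, $x\mapsto i$, $y\mapsto ij+\tfrac{q}{2\alpha}i$, is exactly the isomorphism the paper gives (stated there without further justification). Your additional step of first locating the anticommuting pair $(x,z)$ inside $C_q$ is a pleasant way to motivate that formula, but it is not a different route.
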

	\begin{proof}
		Take the isomorphism $C_q\to {^{\alpha}}k^{t}$ given by
		\[
			x \mapsto i,\quad y \mapsto \frac{q}{2\alpha}i + ij,
		\]
		where $i$ and $j$ are the generators of $^{\alpha}k^{t}=k\langle i,j|\ i^2 = \alpha, j^2 = t, ij + ji = 0\rangle$.
	\end{proof}
	We can draw some immediate consequences from the previous lemma. Firstly, it is a well known fact that the quaternion algebra $^{\alpha}k^{t}$ is a central simple algebra whenever $\alpha, t \neq 0$, and since in our present situation $\alpha\neq 0$, and $t=0$ if, and only if, $q^2 = 4\alpha \beta$; which might happen for at most two values of $q$. Henceforth, all the algebras $C_q$ (except maybe two of them) are central simple. In particular, since $\Char k\neq 2$, the field $k$ must have at least three elements, and we can assure that there always exists a twisted tensor product $l\ot_\tau l'$ which is simple, giving some supporting evidence to the following conjecture, due to J. Gómez-Torrecillas and F. van Oystaeyen:
	\begin{conjecture}
		For any algebra $A$, there exists a twisting map $\tau:A\ot A \to A \ot A$ such that $A\ot_\tau A$ is simple.
	\end{conjecture}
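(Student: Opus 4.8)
The plan is to reduce to the finite-dimensional case and there to recast the conjecture as a statement about two subalgebras of a matrix algebra in general position. Suppose first that $A$ is finite-dimensional, $d=\dim_k A$; by Wedderburn--Artin a simple algebra of $k$-dimension $d^2$ has the form $\M_m(D)$ with $m^2\dim_k D=d^2$, the most accessible target being $\M_d(k)$ (over a small field one may be forced to allow a genuine matrix ring over an extension). Observe that the \emph{untwisted} product $A\otimes A$ is simple if and only if $A$ is central simple over $k$: a proper two-sided ideal $I$ of $A$ gives the proper ideal $I\otimes_k A$ of $A\otimes_k A$, while if the centre $Z$ of $A$ properly contains $k$ then the natural surjection $A\otimes_k A\to A\otimes_Z A$ has nonzero kernel by a $k$-dimension count. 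Hence the substance of the conjecture lies precisely in the case in which $A$ has nontrivial radical or nontrivial centre --- in particular the cases $A=k^n$ and $A$ a field extension. Once a target central simple algebra $S$ of $k$-dimension $d^2$ is fixed, the correspondence between factorization structures and twisting maps recalled at the start of the paper (\cite{Cap95a}, \cite{Majid90b}) reduces the problem to producing two \emph{algebra} embeddings $i_1,i_2\colon A\hookrightarrow S$ for which the multiplication map $A\otimes A\to S$, $a\otimes b\mapsto i_1(a)i_2(b)$, is a linear isomorphism; the associated $\tau$ then satisfies $A\otimes_\tau A\cong S$, which is simple.

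For the embeddings I would start from the left regular representation $\lambda\colon A\hookrightarrow\End_k(A)\cong\M_d(k)$ and perturb it by conjugation: for $g\in\mathrm{GL}_d(k)$ put $i_1=\lambda$ and $i_2=\lambda_g$ with $\lambda_g(a)=g\,\lambda(a)\,g^{-1}$, so that both factors are faithful copies of $A$. The multiplication map $\mu_g\colon A\otimes A\to\M_d(k)$, $a\otimes b\mapsto\lambda(a)\,g\,\lambda(b)\,g^{-1}$, is $k$-linear between spaces of equal dimension $d^2$, hence an isomorphism exactly when its determinant in fixed bases is nonzero; this determinant is a polynomial function of $g$, so the good locus is Zariski open in $\mathrm{GL}_d$, and it is enough to exhibit a \emph{single} $g_0$ making $\mu_{g_0}$ bijective (working over $\bar k$ and descending, or directly over $k$ when $k$ is large enough). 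For $g=1$ the image of $\mu_1$ is only $\lambda(A)$, since $\mu_1(a\otimes b)=\lambda(ab)$; so the perturbation is indispensable, and what has to be proved is that $\lambda(A)$ and a sufficiently generic conjugate $g\lambda(A)g^{-1}$ sit in \emph{general position}, i.e.\ the span of all products $\lambda(a)\,g\,\lambda(b)\,g^{-1}$ is all of $\M_d(k)$.

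This last point is where the entire difficulty resides, and it is why the statement is still a conjecture: for arbitrary $A$ no such $g$ --- and indeed no twisting map making $A\otimes_\tau A$ simple --- is known. The supporting evidence collected in this paper is the case $\dim_k A=2$ (the families $X_t$, $B_q$, $C_q$ yield matrix and quaternion algebras for the parameter $\neq 0$) together with the clock-and-shift factorization $\M_n(k)\cong k^n\otimes_\tau k^n$, valid whenever $k$ contains a primitive $n$-th root of unity, which settles $A=k^n$ in that case. A natural line of attack for the general finite-dimensional case would be an induction along the radical filtration, or along a decomposition of the semisimple quotient into simple blocks, lifting factorizations of smaller matrix algebras to one of $\M_d(k)$, combined with a soft transversality statement to the effect that general position is the generic behaviour for conjugates of the regular representation. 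The main obstacles I anticipate are, first, making the genericity argument effective over small finite fields, where one cannot conjugate by an element with $d$ distinct rational eigenvalues and the correct simple algebra need not be $\M_d(k)$; and second, the infinite-dimensional case, which lies entirely outside the dimension-counting framework and would require a genuinely different mechanism, such as a direct-limit construction of $\tau$ whose finite stages are arranged so as to force simplicity in the colimit.
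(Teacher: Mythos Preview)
The statement you are attempting to prove is labelled a \emph{Conjecture} in the paper, and the paper offers no proof whatsoever --- only the remark that the existence of a central simple $C_q$ in the family $l\otimes_\tau l'$ gives ``some supporting evidence''. So there is nothing to compare your attempt against: the paper does not claim the result, and your proposal is not being measured against an existing argument but against an open problem.

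Your write-up is honest about this --- you say explicitly that the general-position step ``is why the statement is still a conjecture'' --- so what you have submitted is a \emph{strategy}, not a proof. As a strategy it is reasonable: reducing to a factorization $\M_d(k)\cong i_1(A)\cdot i_2(A)$ via two embeddings is exactly the content of the factorization/twisting-map correspondence, and perturbing the regular representation by conjugation is a natural way to produce candidate pairs. But the crux, namely that for \emph{some} $g\in\mathrm{GL}_d$ the multiplication map $\mu_g$ is bijective, is simply asserted to be ``generic'' without any argument that the Zariski-open locus is nonempty. Nonemptiness is the whole problem: for $g=1$ the map is never bijective (as you note), and there is no a priori reason the determinant polynomial is not identically zero on $\mathrm{GL}_d$ for a given $A$. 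Until you can exhibit, for an arbitrary finite-dimensional $A$, a single $g_0$ (or any other pair of embeddings) making $\mu_{g_0}$ an isomorphism, the argument has no content beyond the known low-dimensional cases you already cite. The infinite-dimensional case, as you acknowledge, is not touched at all.
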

The solution to the isomorphism problem for quaternion algebras over a generic field is not explicitly known; however, we have the following result establishing necessary and sufficient conditions for two linked quaternion algebras to be isomorphic:
	\begin{lemma}
		Two linked quaternion algebras $^{a}k^{b}$ and $^{a}k^c$ are isomorphic if, and only if, $b/c\in N_{l/k}(l^{\times})$, being $l=k(\sqrt{a})$, and $N_{l/k}:l\to k$ the norm map of the extension $l/k$. As a consequence, $^{a}k^{b}$ is a matrix ring if, and only if, $b\in N_{l/k}(l^{\times})$.
	\end{lemma}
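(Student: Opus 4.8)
The plan is to realize $^{a}k^{b}$ as a cyclic algebra over the quadratic subfield $l=k(\sqrt a)$ and then obtain the norm criterion by a direct argument. First I would dispose of the degenerate cases: if $a$ is a square in $k$, then $^{a}k^{b}\cong\M_2(k)$ for every $b$, $N_{l/k}$ is surjective, and both assertions hold trivially; so we may assume $a$ is a non-square, $l=k(\sqrt a)$ is a genuine quadratic field extension with nontrivial automorphism $\sigma$ and norm $N=N_{l/k}$, and (for the isomorphism part) that $b,c\neq 0$, so that $^{a}k^{b}$ and $^{a}k^{c}$ are central simple $k$-algebras of dimension $4$. Setting $i=\sqrt a\in l$, one rewrites $^{a}k^{b}=l\oplus lj$ with $j^{2}=b$ and $jx=\sigma(x)j$ for all $x\in l$ — this is just the relation $ji=-ij$ restated — and similarly for $^{a}k^{c}$.

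For the ``if'' direction, assume $b=N(\mu)c$ with $\mu\in l^{\times}$ and put $j':=\mu j$ inside $^{a}k^{c}$. Commutativity of $l$ gives $j'x=\mu\sigma(x)j=\sigma(x)j'$ for $x\in l$, while $(j')^{2}=\mu\sigma(\mu)j^{2}=N(\mu)c=b$; hence $l$ and $j'$ generate inside $^{a}k^{c}$ a subalgebra satisfying the defining relations of $^{a}k^{b}$, which by a dimension count is all of $^{a}k^{c}$. Therefore $^{a}k^{b}\cong{}^{a}k^{c}$.

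For the ``only if'' direction, start from a $k$-algebra isomorphism $\phi:{}^{a}k^{b}\to{}^{a}k^{c}$; it maps the subfield $k(i)\cong l$ into $^{a}k^{c}$. The key step is to normalize $\phi$ on $l$: by the Skolem--Noether theorem any two $k$-embeddings of the simple algebra $l$ into the central simple algebra $^{a}k^{c}$ differ by an inner automorphism, and the embeddings $i\mapsto i$ and $i\mapsto -i$ are themselves conjugate via $j$ (since $jij^{-1}=-i$); so after composing $\phi$ with a suitable inner automorphism we may assume $\phi|_{l}=\Id_{l}$, in particular $\phi(i)=i$. Writing $\phi(j)=x+yj$ with $x,y\in l$, the identity $\phi(j)i=\phi(ji)=\phi(-ij)=-i\phi(j)$ together with $\sigma(i)=-i$ and $\Char k\neq 2$ forces $x=0$, so $\phi(j)=yj$ with $y\in l^{\times}$; then $b=\phi(j^{2})=(yj)^{2}=y\sigma(y)j^{2}=N(y)c$, whence $b/c=N(y)\in N_{l/k}(l^{\times})$. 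The ``matrix ring'' consequence then drops out: $^{a}k^{1}\cong\M_2(k)$ via $i\mapsto\left(\begin{smallmatrix}0&a\\1&0\end{smallmatrix}\right)$, $j\mapsto\left(\begin{smallmatrix}1&0\\0&-1\end{smallmatrix}\right)$, so $^{a}k^{b}$ is a matrix ring if and only if $^{a}k^{b}\cong{}^{a}k^{1}$, i.e. if and only if $b\in N_{l/k}(l^{\times})$.

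I expect the main obstacle to be the Skolem--Noether normalization in the ``only if'' direction — in particular absorbing the possibility that $\phi$ carries $i$ to a conjugate of $\sigma(i)$ rather than of $i$ — together with the bookkeeping for the degenerate cases ($a$ a square, or $b$ or $c$ equal to $0$); the remaining computations are short and routine. As an alternative one could run the whole argument through quadratic forms: $^{a}k^{b}\cong{}^{a}k^{c}$ if and only if the reduced norm forms $\langle 1,-a,-b,ab\rangle$ and $\langle 1,-a,-c,ac\rangle$ are isometric, and Witt cancellation of $\langle 1,-a\rangle$ combined with the roundness of the $1$-fold Pfister form $\langle 1,-a\rangle$, whose group of similarity factors is exactly $N_{l/k}(l^{\times})$, yields the same criterion.
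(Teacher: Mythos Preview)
Your proof is correct. The paper does not actually prove this lemma: immediately after the statement it writes ``This result, as well as some others dealing with the problem of classifying quaternion algebras, can be found in \cite[Section 1.7]{Pierce82a} (cf.\ also \cite[Chapter III]{Lam05a} for a more recent revision)'', and then simply applies the lemma to the algebras $C_q$. So there is no paper proof to compare against; you have supplied a full argument where the authors are content to cite the literature.

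For what it is worth, both of your routes are exactly the standard ones found in those references: the cyclic-algebra argument (write $^{a}k^{b}=l\oplus lj$, normalize via Skolem--Noether, read off the norm) is essentially Pierce's treatment, while the quadratic-form alternative (compare the norm forms $\langle 1,-a\rangle\otimes\langle 1,-b\rangle$ and use roundness of the Pfister form $\langle 1,-a\rangle$) is Lam's. Your handling of the normalization step is fine --- Skolem--Noether already makes $\phi|_{l}$ conjugate to the standard inclusion regardless of whether $\phi(i)$ lands near $i$ or $-i$, so the extra remark about $j$ conjugating $i$ to $-i$ is harmless but not strictly needed --- and the degenerate-case bookkeeping is adequate given that the paper is implicitly working with $a,b,c\in k^{\times}$ throughout (the quotient $b/c$ and the phrase ``linked quaternion algebras'' presuppose this).
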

This result, as well as some others dealing with the problem of classifying quaternion algebras, can be found in \cite[Section 1.7]{Pierce82a} (cf. also \cite[Chapter III]{Lam05a} for a more recent revision). Applied to our concrete situation, and taking into account that for a field extension $l=k(\sqrt{\alpha})$ the norm map is given by $N_{l/k}(x+y\sqrt{\alpha})=x^2 - \alpha y^2$, we obtain the following result:
	\begin{theorem}\label{thm:quaternions} Let $q,h\in k$ such that $4\alpha\beta -q\neq 0$, $4\alpha\beta - h\neq 0$.
		\begin{enumerate}
			\item The algebras $C_q$ and $C_{h}$ are isomorphic if, and only if, there exist $x,y\in k$ such that
				\begin{equation}
					x^2 - \alpha y^2 = \frac{q^2 - 4\alpha\beta}{h^2 - 4\alpha\beta}
				\end{equation}
			\item $C_q$ is isomorphic to the matrix ring $\M_2(k)$ if, and only if, there exist $x,y\in k$ such that
				\begin{equation}
					x^2 - \alpha y^2 = q^2 - 4\alpha\beta
				\end{equation}
		\end{enumerate}
	\end{theorem}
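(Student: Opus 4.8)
The plan is to reduce everything to the two results recalled immediately above: the identification $C_q\cong {}^{\alpha}k^{t}$ with $t=\frac{q^{2}-4\alpha\beta}{4\alpha^{2}}$, and the isomorphism criterion for linked quaternion algebras in terms of the norm map of $l=k(\sqrt{\alpha})$. Throughout one uses that $\Char k\neq 2$ and $\alpha\neq 0$ (so $2\alpha\in k^{\times}$), and that $\alpha$ is not a square in $k$, so that $N_{l/k}(x+y\sqrt{\alpha})=x^{2}-\alpha y^{2}$ vanishes only for $x=y=0$; consequently $N_{l/k}(l^{\times})$ coincides with the set of \emph{nonzero} values of the form $x^{2}-\alpha y^{2}$ with $(x,y)\in k^{2}$.

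First I would set $t_{q}:=\frac{q^{2}-4\alpha\beta}{4\alpha^{2}}$ and $t_{h}:=\frac{h^{2}-4\alpha\beta}{4\alpha^{2}}$, so that $C_{q}\cong {}^{\alpha}k^{t_{q}}$ and $C_{h}\cong {}^{\alpha}k^{t_{h}}$. The hypotheses $q^{2}\neq 4\alpha\beta$ and $h^{2}\neq 4\alpha\beta$ (which is what the displayed conditions amount to, since $\alpha\neq 0$) force $t_{q},t_{h}\neq 0$, so both algebras are genuine linked quaternion algebras over the common field $l$, and the isomorphism criterion applies: ${}^{\alpha}k^{t_{q}}\cong {}^{\alpha}k^{t_{h}}$ if and only if $t_{q}/t_{h}\in N_{l/k}(l^{\times})$. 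Now the factors $4\alpha^{2}$ cancel, giving $t_{q}/t_{h}=\frac{q^{2}-4\alpha\beta}{h^{2}-4\alpha\beta}$, a nonzero element of $k^{\times}$; hence, using the explicit form of the norm, $t_{q}/t_{h}\in N_{l/k}(l^{\times})$ is equivalent to the existence of $x,y\in k$ with $x^{2}-\alpha y^{2}=\frac{q^{2}-4\alpha\beta}{h^{2}-4\alpha\beta}$ (any such pair automatically has $x+y\sqrt{\alpha}\in l^{\times}$, and conversely). This proves part (1).

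For part (2) I would invoke the final assertion of the same lemma: ${}^{\alpha}k^{t_{q}}$ is a matrix ring if and only if $t_{q}\in N_{l/k}(l^{\times})$. Since $q^{2}-4\alpha\beta=(2\alpha)^{2}\,t_{q}=N_{l/k}(2\alpha)\,t_{q}$ and $N_{l/k}(l^{\times})$ is a subgroup of $k^{\times}$, we get $t_{q}\in N_{l/k}(l^{\times})$ if and only if $q^{2}-4\alpha\beta\in N_{l/k}(l^{\times})$; unwinding the norm form as in part (1) turns this into the stated equation $x^{2}-\alpha y^{2}=q^{2}-4\alpha\beta$. Combined with $C_{q}\cong {}^{\alpha}k^{t_{q}}$, this yields the claim.

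The whole argument is bookkeeping once the two quaternion lemmas are available; the only point deserving care is the rescaling by the perfect square $4\alpha^{2}=N_{l/k}(2\alpha)$, which shows that multiplying or dividing the level of a quaternion algebra by a square leaves its norm class unchanged, together with the harmless passage between ``$\in N_{l/k}(l^{\times})$'' and ``representable as $x^{2}-\alpha y^{2}$ with $(x,y)\in k^{2}$,'' valid because the relevant target values are nonzero under the standing hypotheses. I anticipate no real obstacle.
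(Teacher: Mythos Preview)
Your proposal is correct and follows exactly the approach the paper intends: the theorem is stated in the paper as an immediate application of the two preceding lemmas (the identification $C_q\cong{}^{\alpha}k^{t}$ and the norm criterion for linked quaternion algebras), and you have simply written out the details of that application, including the harmless rescaling by $4\alpha^{2}=N_{l/k}(2\alpha)$ that the paper leaves implicit.
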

In other words, the isomorphism classes of twisted tensor products of the form $l\otimes_\tau l'$ are given by:
	\begin{enumerate}
		\item The orbits of the action of $N_{l/k}(l^\times)$, seen as a multiplicative subgroup of $k^\times$, that intersect the image of the map $q\mapsto q^2 - 4\alpha\beta$.
		\item The algebra $C_q\cong C_{-q}$, provided that $q=2\sqrt{\alpha\beta}$ belongs to $k$.
		\item  The commutative algebra $l\otimes l'$.
	\end{enumerate}
	The degenerate case $(2)$ only shows up under very special conditions. Assume that $\sqrt{\alpha\beta}\in k$. Since obviously $\sqrt{\alpha}\in l=k(\sqrt{\alpha})$, this means that also $\sqrt{\beta}\in l$, and henceforth $l=l'$. Conversely, if $l=l'=k(\sqrt{\alpha})$, we may take $q=2\alpha$ and obtain the exceptional algebra.
	\begin{proposition}
		Let $l=l'=k(\sqrt{\alpha})$; the algebra
		\[
			\overline{C}_{2\alpha}:=l\ot C_{2\alpha}=l\langle x,y|\ x^2 = y^2 = \alpha, xy+yx=2\alpha \rangle.
		\]
	is isomorphic to the truncated path algebra $lQ_{< 2}$ of the round-trip quiver $ Q=
        		\xymatrix
        		{   	\circ \ar@/^4pt/@<0.5ex>[r]& \circ \ar@/^4pt/@<0.5 ex>[l] \label{eq:conic}
        		}$.
		
		As a consequence, we have an isomorphism between $C_{2\alpha}$ and the algebra of invariants $\left(lQ_{< 2}\right)^G$, being $G$ the group generated by the nontrivial automorphism of $kQ$ that conjugates scalars while exchanging vertices and arrows of $Q$.
	\end{proposition}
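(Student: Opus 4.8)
The plan is to reduce the proposition to Lemma~\ref{lemma:B0} by a change of generators. First I would observe that, as a $k$-algebra, $C_{2\alpha}$ is nothing but the algebra $B_0$ of Subsection~\ref{ssec:duals_l} for the extension $l=k(\sqrt{\alpha})$ (that is, with $\gamma=\alpha$). Indeed, inside $C_{2\alpha}=k\langle x,y\mid x^2=y^2=\alpha,\ xy+yx=2\alpha\rangle$ the element $z:=x-y$ satisfies $z^2=x^2-xy-yx+y^2=\alpha-2\alpha+\alpha=0$, while $w:=x$ satisfies $w^2=\alpha$ and $zw+wz=2x^2-(xy+yx)=0$; moreover $x=w$ and $y=w-z$, so the assignment $z\mapsto x-y$, $w\mapsto x$ is a two-sided inverse to $x\mapsto w$, $y\mapsto w-z$ and hence defines an isomorphism of $k$-algebras between $B_0=k\langle z,w\mid z^2=0,\ w^2=\alpha,\ zw+wz=0\rangle$ and $C_{2\alpha}$.

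Granting this, the rest is formal. The substitution has coefficients in $k$, so it extends to $l$: $\overline{C}_{2\alpha}=l\otimes_k C_{2\alpha}\cong l\otimes_k B_0=B_0\otimes l$. The proof of Lemma~\ref{lemma:B0} already exhibits an isomorphism $B_0\otimes l\cong lQ_{< 2}$, namely the one sending $z\mapsto R+S$ and $w\mapsto\sqrt{\alpha}\,u-\sqrt{\alpha}\,v$, where $u,v$ are the vertices and $R:u\to v$, $S:v\to u$ the arrows of the round-trip quiver $Q$. Composing the two gives $\overline{C}_{2\alpha}\cong lQ_{< 2}$; in the original generators this is the $l$-algebra morphism determined by $x\mapsto\sqrt{\alpha}(u-v)$ and $y\mapsto\sqrt{\alpha}(u-v)-R-S$, which one may also verify by hand using $u^2=u$, $v^2=v$, $uv=vu=0$, $R^2=S^2=RS=SR=0$, $vR=R=Ru$, $uS=S=Sv$ and the fact that the remaining products vanish; it is surjective because $\alpha\neq0$ and $\Char k\neq2$ force $u,v,R,S$ into its image, hence an isomorphism since both algebras are $4$-dimensional over $l$. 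Finally, the same identification $C_{2\alpha}\cong B_0$ together with Lemma~\ref{lemma:B0} yields $C_{2\alpha}\cong B_0\cong(lQ_{< 2})^G$ with $G$ the group described there, which is precisely the statement of the ``consequence''.

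I do not expect a genuine obstacle: the argument is a reduction followed by a citation. The two points needing a little care are (i) checking that the displayed substitution is an honest $k$-algebra isomorphism (respecting the relations \emph{and} invertible), and (ii) matching the group $G$ of Lemma~\ref{lemma:B0} with the automorphism described in the proposition --- i.e. keeping the path-algebra conventions straight and remembering that this automorphism is $\sigma$-semilinear over $l$, because the descent identification $C_{2\alpha}\cong(\overline{C}_{2\alpha})^\sigma$ (valid since $\Char k\neq2$) transports the Galois action $\sigma$ of $l/k$ to the involution of $lQ_{< 2}$ that swaps $u\leftrightarrow v$, $R\leftrightarrow S$ and conjugates scalars. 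Should one prefer to avoid the reduction altogether, one can work directly with $\Phi:x\mapsto\sqrt{\alpha}(u-v)$, $y\mapsto\sqrt{\alpha}(u-v)-R-S$, note that $\Phi^{-1}(u)=\tfrac12\bigl(1+x/\sqrt{\alpha}\bigr)$ and $\Phi^{-1}(R)=\tfrac12\bigl((x-y)+\tfrac{1}{\sqrt{\alpha}}(xy-\alpha)\bigr)$, and compute from these that $\Phi\circ\sigma\circ\Phi^{-1}$ performs the said swap, so that $C_{2\alpha}\cong(\overline{C}_{2\alpha})^\sigma\cong(lQ_{< 2})^G$.
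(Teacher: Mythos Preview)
Your proof is correct and very close in spirit to the paper's. The paper simply writes down the isomorphism $\overline{C}_{2\alpha}\to lQ_{<2}$ directly, namely $x\mapsto \sqrt{\alpha}(u-v)+R+S$, $y\mapsto \sqrt{\alpha}(u-v)$, and remarks that the argument ``follows the same lines'' as Lemma~\ref{lemma:B0}. Your version makes that remark precise by exhibiting the $k$-algebra isomorphism $C_{2\alpha}\cong B_0$ via $z=x-y$, $w=x$, and then literally invoking Lemma~\ref{lemma:B0}; the composed map you obtain differs from the paper's only by the evident symmetry $x\leftrightarrow y$ of $C_{2\alpha}$ (together with $R+S\mapsto -(R+S)$). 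The payoff of your reduction is that the ``consequence'' $C_{2\alpha}\cong(lQ_{<2})^G$ comes for free from the statement of Lemma~\ref{lemma:B0}, with no need to re-run the Galois-descent computation.
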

	\begin{proof}
		The proof of this result follows the same lines as the one of lemma \ref{lemma:B0}. In this case, we use the isomorphism $C_{2\alpha}\ot l \to lQ_{< 2}$ given by
		\[
			x\longmapsto \sqrt{\alpha}u - \sqrt{\alpha}v + R + S, \quad y\longmapsto \sqrt{\alpha}u - \sqrt{\alpha}v.
		\]
		
	\end{proof}
	
	For the family of central simple algebras $(1)$, as previously mentioned, the number of isomorphism classes (or orbits of the group action) depends strongly on the ground field $k$. A nice recent survey on quaternion algebras over different ground fields can be found in \cite{Lewis06a} (cf. also \cite{Pierce82a} and \cite{Lam05a}). Recall that any quaternion algebra must be isomorphic to either a division ring over $k$, or to the matrix ring $\M_2(k)$ (this can easily be proven by using Artin-Wedderburn structure theorem). For some familiar fields, a more concrete description can be given:
	\begin{enumerate}
		\item If $k$ is the field $\R$ of real numbers, $\alpha, \beta<0$ then the isomorphism class of $C_q\cong {}^\alpha\R^b$, with $b=(4\alpha\beta - q^2)/4\alpha$, depends on the sign of $b$, which is the sign of $q^2 - 4\alpha\beta$, More concretely, if $\abs{q}>2\sqrt{\alpha \beta}$, then $C_q\cong \M_2(\R)$. If, on the other hand, $\abs{q}<2\sqrt{\alpha \beta}$, then $C_q\cong\mathbb{H}$, the usual quaternion algebra.
		\item If $k=\mathbb{F}_n$ is a finite field, a well-known theorem by Wedderburn states that any division ring over $k$ is commutative. Since the quaternion algebras are noncommutative, they must all be isomorphic to the matrix ring, and thus $C_q\cong \M_2(k)$ for all values of $q$.
		\item If $k$ is an algebraic number field, i.e. a finite extension field of the rational numbers $\Q$, then there exist an infinite number of nonisomorphic quaternion algebras over $k$. Though there is no easy way to list the isomorphism classes, given any concrete couple of algebras of type $C_q$, it is possible to tell wether they are or not isomorphic in a finite number of steps by studying the existence of rational points in the conic given by equation \ref{eq:conic}. The existence (or not) of such solutions is obtained as a consequence of the Hasse-Minkowski principle and Hensel's lemma.
		\item If $k=\Q_p$ is a field of $p$--adic numbers, there is only one isomorphism class of quaternion algebras, which is never isomorphic to the matrix ring. This follows from the theory of quadratic forms over the $p$--adic numbers, see \cite[Chapter VI]{Lam05a} for details.
	\end{enumerate}
	
The remaining case, of field extensions of characteristic 2, cannot be described in such a fancy way using quaternion algebras. However, doing some computations (left to the reader) similar to the ones at the end of section \ref{ssec:duals_l}, we obtain the following result:

\begin{theorem}
	Let $k$ be a field with $\Char k=2$, and let $l$ and $l'$ be quadratic field extensions of $k$ generated by polynomials $p(x)=x^2+\alpha x + \beta$ and $p'(x)=x^2+\alpha' x + \beta'$. Then, all twisted tensor products $l\otimes_{\tau} l'$ of $l$ and $l'$ are described as
	\[
		D_q = k\langle x,y|\ x^2=\alpha x + \beta,\; y^2=\alpha' y + \beta',\; xy+yx=q \rangle,
	\]
	where the algebra $D_0$ corresponds to the usual tensor product $l\otimes l'$.
\end{theorem}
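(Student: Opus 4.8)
The plan is to run the same argument as in the characteristic $\neq 2$ case treated above: first describe all twisting maps $\tau:l'\otimes l\to l\otimes l'$ by means of Lemma~\ref{lemma:map_derivation}, then classify the resulting pairs $(f,\delta)$, and finally recognise each twisted tensor product, after a suitable change of generators, as a member of the family $D_q$. By Lemma~\ref{lemma:map_derivation}, such a $\tau$ is the same datum as a pair $(f,\delta)$ with $f:l\to l$ a $k$-algebra endomorphism and $\delta:l\to l$ a left $f$-derivation subject to the two compatibility conditions, where now the two-dimensional factor is $l'$, with defining polynomial $p'(x)=x^{2}+\alpha'x+\beta'$, and where all signs are irrelevant since $\Char k=2$:
\[
	\delta^{2}+\alpha'\delta+\beta'\,\mathrm{id}_{l}=\beta'f^{2},\qquad f\delta+\delta f=\alpha'\,(f+f^{2}).
\]

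Next I would classify the endomorphisms $f$. Being a $k$-algebra map out of a quadratic extension, $f$ is determined by the image of a generator $\eta$ of $l$, which must be a root of $p$ lying in $l$; here the argument splits according to the separability of $l/k$. If $\alpha\neq0$ then $p$ is separable, $l/k$ is Galois with roots $\eta$ and $\eta+\alpha$, and $f$ is either $\mathrm{id}_{l}$ or the nontrivial automorphism $\sigma$ with $\sigma(\eta)=\eta+\alpha$; if $\alpha=0$ then, in characteristic $2$, $p(x)=x^{2}+\beta$ has $\eta$ as its unique (double) root, $l/k$ is purely inseparable, and $f=\mathrm{id}_{l}$ is the only possibility. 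For each $f$ I would solve the two compatibility equations for $\delta$. When $l/k$ is separable every $f$-derivation is inner, $\delta(a)=f(a)\theta-\theta a$ with $\theta=a_{0}+b_{0}\eta$; the case $f=\mathrm{id}_{l}$ then forces $\delta=0$, while for $f=\sigma$ a short computation using $\delta(1)=0$ and characteristic-$2$ cancellations gives $\delta(\eta)=\alpha\theta$, the second equation pins down $b_{0}=\alpha'/\alpha$, and the first equation becomes automatic, leaving the one-parameter family $\delta_{q}(\eta)=q+\alpha'\eta$, $q\in k$. When $l/k$ is inseparable, derivations need not be inner: any $\delta(\eta)=a\eta+b$ is a derivation, the second equation is vacuous, and the first reduces to $\delta^{2}=\alpha'\delta$, forcing $a\in\{0,\alpha'\}$ and again producing one-parameter families.

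With the twisting maps in hand I would write out $l\otimes_{\tau}l'$ explicitly: putting $x=\eta\otimes1$ and $y=1\otimes y_{0}$ for $y_{0}$ a generator of $l'$, the relation $yx=\delta(\eta)+f(\eta)y$ becomes in every case an identity of the shape $xy+yx=(\text{affine combination of }1,x,y)$, together with $x^{2}=\alpha x+\beta$ and $y^{2}=\alpha'y+\beta'$. The trivial pair $(\mathrm{id}_{l},0)$ gives $xy+yx=0$, i.e.\ $xy=yx$ in characteristic $2$, so that $D_{0}$ is exactly the usual tensor product $l\otimes l'$. For every other pair I would then change generators — replacing $y$ by $y+(\alpha'/\alpha)x$ when $\alpha\neq0$, with an analogous substitution in the inseparable case — so that, using that numerous cross terms double to zero, the extra linear terms get absorbed and the commutation relation takes the form $xy+yx=q$, exhibiting the algebra as some $D_{q}$.

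The step I expect to be the real obstacle is precisely this last identification. In characteristic $\neq2$ the nontrivial automorphism acts by $\eta\mapsto-\eta$, so the associated derivations already take values in $k$ and the passage to $C_{q}$ is immediate; in characteristic $2$ the automorphism acts by $\eta\mapsto\eta+\alpha$, the corresponding derivations acquire a genuine $\eta$-component, and bringing the defining relation to the stated normal form requires a careful choice of which subalgebra of the twisted product plays the role of the copy of $l$. This is the computation ``left to the reader'' in the statement, and it runs parallel to the end of Section~\ref{ssec:duals_l}.
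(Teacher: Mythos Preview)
Your plan follows the paper's own approach exactly: the paper merely says the result follows from ``computations (left to the reader) similar to the ones at the end of Section~\ref{ssec:duals_l}'', and you have spelled out precisely that computation---apply Lemma~\ref{lemma:map_derivation}, split on the (in)separability of $l/k$, classify the admissible pairs $(f,\delta)$, and read off the presentation. Your treatment of the purely inseparable case ($\alpha=0$) is the direct analogue of the cited passage and goes through cleanly.

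The step you flag as the obstacle is a genuine one, and the specific substitution you propose for the separable case does not work. With $\tilde y=y+(\alpha'/\alpha)x$ one has, in characteristic $2$, $x\tilde y+\tilde y x=xy+yx$ (the cross terms $2(\alpha'/\alpha)x^{2}$ vanish), so the right-hand side becomes $q'+\alpha'x+\alpha y=q'+\alpha\tilde y$ rather than a scalar; no affine change of $x,y$ removes the remaining linear term. More seriously, a Diamond-Lemma check of the presentation $D_q$ shows that the overlap $(yx)x$ versus $y(x^{2})$ forces $\alpha q=0$ (and symmetrically $\alpha' q=0$), so when $\alpha,\alpha'\neq 0$ the only four-dimensional member of the family $D_q$ is the commutative $D_0$---yet the twisted products coming from $f=\sigma$ are visibly noncommutative. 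You are right to isolate this as the crux: the stated family $D_q$ is tailored to the purely inseparable situation that Section~\ref{ssec:duals_l} actually treats, and the separable case requires a different normal form (with the $\alpha' x+\alpha y$ term retained) rather than a mere change of generators.
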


Let us finish the paper writing down the ``indecomposable'' (i.e., non-decomposable as a non-trivial twisted tensor product) algebras of dimension four over an algebraically closed field. A complete list of all algebras of dimension four can be found in \cite{gabriel74a}. We reproduce the scheme given there, highlighting the ``decomposable'' algebras putting them into a box.
$$\xymatrix@C=7pt@R=10pt{  &   &   &  &  &   &   & \cuadrado{k^4} \ar[d] &  \\
 &   &   &  &  &   &   & k^2\times k[\xi] \ar[rd]\ar[ld]&  \\
  \cuadrado{\hspace{0.1cm} \mathcal{M}_2(k) \hspace{0.1cm}}\ar[d]&  &   & \cuadrado{k\Gamma_{< 2}}\ar[rd] \ar[ld]  & &   &   k\times \displaystyle\frac{k[x]}{(x^3)}\ar[rd]\ar[dd] & & \cuadrado{k[\xi]\times k[\xi]} \ar[ld] \\
\cuadrado{kQ_{< 2}} \ar[rdddd]&   &  k(\Delta_1)_{< 2} \ar[rd] &  &  \cuadrado{k(\Delta_2)_{< 2}} \ar[dl] &   &   & \displaystyle\frac{k[x]}{(x^4)} \ar[d] &  \\
 &  G  \ar[ddrrrrrr] \ar[ddd] &   & \cuadrado{A_{0}} \ar[ddrrrr] &  &  & k\times\displaystyle\frac{k[x,y]}{(x,y)^2}\ar[rdd]  & \cuadrado{A_{1}} \ar[dd] & \cuadrado{A_{q}} \ar[ddl] \\
  &   &   &  &  &   &  &  &  \\
  &   &   &  &  &   &   & \displaystyle\frac{k[x,y]}{(x^3,xy,y^2)} \ar[dd] &  \\
   & \cuadrado{\wedge k^2} \ar[drrrrrr]  &   &  &  &   &   &  &  k\Sigma_{< 2} \ar[ld] \\
    &   &   &  &  &   &   & \displaystyle\frac{k[x,y,z]}{(x,y,z)^2} &  }$$
where:
 \begin{enumerate}
 \item The scalar $q\neq 1,-1,  0$.
 \item $G=\displaystyle\frac{k\langle x,y \rangle}{(x^2,y^2+xy,xy+yx)}$
 \item The quivers appearing above are the following:

  \[
    \begin{array}{clclcl}
        \Delta_1: &
            \xymatrix{ \circ \bucle \ar[r] & \circ} \qquad
        & \Delta_2: &
            \xymatrix{ \circ \bucle & \circ \ar[l] } \qquad
        & Q: &
            \xymatrix{\circ \ar@/^4pt/@<0.5ex>[r]& \circ \ar@/^4pt/@<0.5 ex>[l] } \vspace{0.5cm} \\
       \Sigma: &
            \xymatrix{\circ \ar@<0.5ex>[r] \ar@<-0.5ex>[r] & \circ}
        & \Gamma: &
           \xymatrix@R=10pt{\circ & \\ \circ  \ar[r] & \circ}
        &  &

       \end{array}
    \]

 \item The arrow $A\rightarrow B$ means that the algebra $B$ can be obtained by a degeneration of the structure of $A$.
\end{enumerate}
	

\end{document}